\documentclass[11pt]{article}

\usepackage{amsthm, amsmath, amssymb, amsfonts, url, booktabs, tikz, setspace, fancyhdr, bm}
\definecolor{seedblue}{RGB}{67,112,156}
\usepackage{cancel}
\usepackage{geometry}
\usepackage{hyperref, enumerate}
\usepackage[shortlabels]{enumitem}
\usepackage[babel]{microtype}
\usepackage[english]{babel}
\usepackage[capitalise]{cleveref}
\usepackage{comment}
\usepackage{bbm}
\usepackage{csquotes}
\usepackage{mathabx}
\usepackage{tikz}
\usepackage{subcaption}
\usepackage{graphicx}
\usepackage{float}
\usepackage{xcolor}
\usepackage[normalem]{ulem}
\usepackage{diagbox}
\usetikzlibrary{positioning, arrows.meta, shapes.geometric}

\counterwithin{figure}{section}

\newtheorem{theorem}{Theorem}[section]
\newtheorem{prop}[theorem]{Proposition}
\newtheorem{lemma}[theorem]{Lemma}
\newtheorem{cor}[theorem]{Corollary}

\newtheorem{claim}[theorem]{Claim}

\newtheorem{fact}[theorem]{Fact}

\theoremstyle{definition}

\newtheorem{definition}[theorem]{Definition}
\newtheorem*{defn-non}{Definition}

\newtheorem{ques}[theorem]{Question}
\definecolor{rosepink}{RGB}{255,102,204}
\definecolor{dateplum}{HTML}{993366}
\definecolor{darkdateplum}{RGB}{128,0,32}
\definecolor{lightdateplum}{RGB}{219,112,147}
\definecolor{darkred}{RGB}{139,0,0}
\definecolor{lightred}{RGB}{240,130,100}

\newlist{Case}{enumerate}{3}
\setlist[Case, 1]{%
    label           =   {\bfseries Case \arabic*.},
    labelindent=1em ,labelwidth=1cm, labelsep*=1em, leftmargin =!
}
\setlist[Case, 2]{%
    label           =   {\bfseries Subcase \arabic{Casei}.\arabic*.},
    labelindent=-1em ,labelwidth=1cm, labelsep*=1em, leftmargin =!
}
\setlist[Case, 3]{%
    label           =   {\bfseries Subsubcase \arabic{Casei}.\arabic{Caseii}.\arabic*.},
    labelindent=-1em ,labelwidth=1cm, labelsep*=1em, leftmargin =!
}

\newenvironment{poc}{\begin{proof}[Proof of claim]}{\end{proof}}

\newcommand{\cD}{{\mathcal D}}
\newcommand{\cC}{{\mathcal C}}
\newcommand{\cT}{{\mathcal T}}
\usepackage{todonotes}

\newcommand*{\abs}[1]{\lvert#1\rvert}

\title{Asymptotics of the Brown--Erd\H{o}s--S\'os problem at integer exponents}

\author{
Ting-Wei Chao\thanks{Department of Mathematics, Massachusetts Institute of Technology, Cambridge, MA, USA. Email: {\tt twchao@mit.edu}}
\and
Xinqi Huang\thanks{School of Mathematical Sciences, University of Science and Technology of China, Hefei, China and Extremal Combinatorics and Probability Group (ECOPRO), Institute for Basic Science (IBS), Daejeon, South Korea.
Email: \texttt{huangxq@mail.ustc.edu.cn}. Supported by the USTC Excellent PhD Students Overseas, the Institute for Basic Science (IBS-R029-C4), the National Key Research and Development Programs of China 2023YFA1010200, the NSFC under Grants No. 12171452 and No. 12231014 and Innovation Program for Quantum Science and Technology 2021ZD0302902.
}
\and
Hong Liu\thanks{Extremal Combinatorics and Probability Group (ECOPRO), Institute for Basic Science (IBS), Daejeon, South Korea. Emails: \texttt{hongliu@ibs.re.kr}. Supported by the Institute for Basic Science (IBS-R029-C4).}
}

\AddToHook{env/lemma/begin}{\crefalias{theorem}{lemma}}
\AddToHook{env/prop/begin}{\crefalias{theorem}{proposition}}
\AddToHook{env/cor/begin}{\crefalias{theorem}{corollary}}
\AddToHook{env/claim/begin}{\crefalias{theorem}{claim}}
\AddToHook{env/fact/begin}{\crefalias{theorem}{fact}}
\AddToHook{env/definition/begin}{\crefalias{theorem}{definition}}
\AddToHook{env/defn/begin}{\crefalias{theorem}{definition}}
\AddToHook{env/ques/begin}{\crefalias{theorem}{question}}
\AddToHook{env/rmk/begin}{\crefalias{theorem}{remark}}
\crefname{claim}{Claim}{Claims}
\Crefname{claim}{Claim}{Claims}
\crefname{fact}{Fact}{Facts}
\Crefname{fact}{Fact}{Facts}
\crefname{question}{Question}{Questions}
\Crefname{question}{Question}{Questions}

\begin{document}
\date{}
\maketitle

\begin{abstract}
The Brown--Erd\H{o}s--S\'os problem is a fundamental problem in
sparse hypergraph Tur\'an theory.
For integers $r,k\ge 2$ and $s\ge r$, the problem asks for the maximum number $f^{(r)}(n;s,k)$ of edges
in an $n$-vertex $r$-uniform hypergraph containing no $k$ distinct
edges spanning at most $s$ vertices.
In 1971, Brown, Erd\H{o}s, and S\'os proved that $f^{(r)}\bigl(n;(r-t)k+t,k\bigr)=\Theta(n^t)$ for all $r>t\ge 2$
and $k\ge 2$. However,
the existence and the value of the
leading coefficient have remained largely open.

We determine the coefficient
$\pi(r,t,k):=\lim_{n\to\infty}n^{-t}
f^{(r)}\bigl(n;(r-t)k+t,k\bigr)$ for every such $r,t,k$, except when $(r,t)=(3,2)$ and $k\ge 4$ is even. In particular, 
\[
\pi(r,t,k)
=
\begin{cases}
\frac{2}{t!\bigl(2\binom{r}{t}-1\bigr)},
& \text{if $k$ is odd},\\
\frac{1}{t!\binom{r}{t}},
& \text{if $k$ is even and $r\ge4$}.
\end{cases}
\]
Surprisingly, for $r\ge4$, the limit $\pi(r,t,k)$ depends on $k$ only through its parity, not its value. 
For the remaining case, we prove that
$\pi(3,2,k)>1/6$ for all even $k\ge 4$, showing that the natural packing construction is never
asymptotically optimal.
As an application, we extend a connection of Bennett,
Cushman, and Dudek to arbitrary uniformity to resolve several cases of the Erd\H{o}s--Gy\'arf\'as--Shelah generalized Ramsey problem.
\end{abstract}

\section{Introduction}
How many $r$-element subsets of $[n]:=\{1,2,\ldots,n\}$ can one choose if every $k$ of them together contain more than $s$ elements?
This is the Brown--Erd{\H{o}}s--S\'os problem, a central example of
a sparse hypergraph Tur\'an problem; see~\cite{keevash_hypergraph_turan_2011}
for the broader context. For integers $r,k\ge2$ and $s\ge r$,
we write $f^{(r)}(n;s,k)$ for the maximum number of edges in an
$n$-vertex $r$-graph containing no $k$ distinct edges whose union
has at most $s$ vertices.
Brown, Erd{\H{o}}s, and S\'os~\cite{brown_erdos_sos_1973} established in the 70s the
general bounds
\[
    \Omega\left(
        n^{\frac{rk-s}{k-1}}
    \right)
    \le
    f^{(r)}(n;s,k)
    \le
    O\left(
        n^{\left\lceil\frac{rk-s}{k-1}\right\rceil}
    \right)
\]
for fixed $r,k\ge2$ and $r\le s\le rk$.

When $(rk-s)/(k-1)$ is not an integer, these bounds do not generally
determine even the correct order of magnitude. The most prominent
example is the classical Brown--Erd{\H{o}}s--S\'os conjecture, which
asserts that
$f^{(3)}(n;k+3,k)=o(n^2)$
for every fixed $k\ge3$
\cite{brown_erdos_sos_1973}.
Its first case is the celebrated $(6,3)$-problem, solved by
Ruzsa and Szemer\'edi
\cite{ruzsa_szemeredi_1978},
whereas the next case, the $(7,4)$-problem, remains open.
For subsequent progress on approximate forms of the conjecture, see
\cite{conlon_gishboliner_levanzov_shapira_2023,
janzer_methuku_milojevic_sudakov_2025}; for progress in the dense
linear setting, see
\cite{gishboliner_solymosi_counting_2026,santos_tyomkyn_dense_2025}.

Another fundamental direction concerns the case in which
$t=(rk-s)/(k-1)$ is an integer with $1\le t<r$.
The case $t=1$ is elementary and completely understood
\cite{glock_64_2024}. In particular, 
    $\lim_{n\to\infty}
    \frac{
        f^{(r)}\bigl(n;(r-1)k+1,k\bigr)
    }{n}=\frac{k-1}{(k-1)(r-1)+1}.$
We therefore focus on the case $2\le t<r$.
The preceding bounds by Brown, Erd{\H{o}}s, and S\'os give the exponent:
$f^{(r)}\bigl(n;(r-t)k+t,k\bigr)=\Theta(n^t)$. Thus the problem is to determine the leading
coefficient. More precisely, does the limit
\[
    \pi(r,t,k):=
    \lim_{n\to\infty}
    \frac{
        f^{(r)}\bigl(n;(r-t)k+t,k\bigr)
    }{n^t}
\]
exist, and what is its value? Brown, Erd{\H{o}}s, and S\'os
\cite{brown_erdos_sos_1973}
conjectured that $\pi(3,2,k)$ exists for every $k\ge2$.
The conjecture was proved by Delcourt and Postle
\cite{delcourt_postle_limit_2024}, and Shangguan~\cite{shangguan_degenerate_2023} subsequently
established the existence of $\pi(r,2,k)$ for every $r\ge3$ and
$k\ge2$.

Despite this progress, exact coefficients were known only in a few
cases. For arbitrary $r>t\ge2$, R\"odl determined the case $k=2$
\cite{rodl_packing_1985}, while Glock, Joos, Kim, K\"uhn, Lichev, and
Pikhurko determined $k\in\{3,4\}$
\cite{glock_64_2024}. In the original case $(r,t)=(3,2)$, the exact
values were known for $2\le k\le7$
\cite{glock_triple_2019,glock_64_2024,glock_k567_2025},
with further results only in restricted parameter ranges
\cite{letzter_sgueglia_2025,pikhurko_sun_2026,wang_zeng_2026}.
Thus no general formula for $\pi(r,t,k)$ was known, and for
$t\ge3$ even its existence remained open in general
\cite{letzter_sgueglia_2025,shangguan_tamo_2020}.

Our main result determines the asymptotics for every odd $k$, and for every even $k$ when $r\ge4$. 
Together with the known result
for triple systems, it resolves the general existence problem: $\pi(r,t,k)$ exists for all integers $r>t\ge2$ and $k\ge2$.

\begin{theorem}\label{thm:BES-r-k-t-odd}
    For all integers $r>t\ge2$ and $k\ge 2$, we have
    \[
\pi(r,t,k)
=
\begin{cases}
\frac{2}{t!\bigl(2\binom{r}{t}-1\bigr)},
& \text{if $k$ is odd},\\
\frac{1}{t!\binom{r}{t}},
& \text{if $k$ is even and $r\ge4$}.
\end{cases}
\]
\end{theorem}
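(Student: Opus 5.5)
The proof will have an upper bound part and a lower bound part, each split by the parity of $k$. Throughout we work with the standard reformulation. Call a set of $j$ edges spanning at most $(r-t)j+t$ vertices a \emph{$j$-configuration}. An $r$-graph $H$ is $(k)$-free if it contains no $k$-configuration. For $t$-sets $T$, write $d(T)$ for the number of edges containing $T$.

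\textbf{Upper bound, $k$ even.} I would first show the following. If $H$ has no $k$-configuration, then after deleting $o(n^t)$ edges, every $t$-set lies in at most one edge. Summing over edges then gives $e(H)\binom{r}{t}\le\binom{n}{t}$, which is exactly $1/(t!\binom rt)$.

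The mechanism is to glue configurations. Two edges sharing a $t$-set form a $2$-configuration. If there are $\Omega(n^t)$ such pairs, I want to chain $k/2$ of them, each new pair attached to the previous union in $t$ vertices. Each attachment adds $2(r-t)$ new vertices for two new edges, so the union stays a configuration. The attachments are found greedily by a counting/supersaturation argument: a pair of edges touching an already built configuration $C$ in some $t$-set can be chosen disjoint from $C$ otherwise, as long as the pairs are dense enough. This is where $r\ge 4$ should enter. For $r=3$, $t=2$, the pairs may be forced to attach only through vertices that create overlaps, and the paper excludes exactly this case. I would make the density argument rigorous via a weighted count, iteratively removing edges in $2$-configurations of low "extendability". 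I expect this to be the main obstacle.

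\textbf{Upper bound, $k$ odd.} Here $t$-sets of multiplicity $2$ can be tolerated, but only sparsely. I would assign weights: each edge distributes weight over its $\binom rt$ $t$-subsets, and each $t$-set receives weight at most $1$. The extremal local structure is a pair of edges $e,f$ sharing exactly one $t$-set, which covers $2\binom rt-1$ $t$-sets with $2$ edges, giving density $2/(2\binom rt-1)$. For the claim one must show three things. First, $t$-sets of degree $\ge3$ are negligible, since three edges on a common $t$-set plus chaining give a $k$-configuration. Second, two $2$-configurations sharing an edge are negligible, since a path $e-f-g$ glued by $t$-sets is a $3$-configuration, and with chained $2$-configurations that reaches any odd $k$. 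Third, the edges then decompose into components that are single edges or pairs, which gives the bound by a charging argument. The chaining again uses the "attach a pair" extension of the even case, starting from a $1$- or $3$-configuration. This works for all $r\ge3$, matching the statement.

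\textbf{Lower bounds.} For $k$ even, I would use R\"odl's nibble / near-perfect packing of $K_n^{(t)}$ by $K_r^{(t)}$. Its edges pairwise share fewer than $t$ vertices, so no $j$-configuration exists for $j\ge2$, by the count: the union of $j$ edges pairwise intersecting in at most $t-1$ vertices exceeds $(r-t)j+t$ vertices. For $k$ odd, I would pack copies of the "double edge" gadget, two $r$-sets meeting in exactly $t$ vertices, as edge-disjoint $t$-graph packings. I would use the conflict-free hypergraph matching method of Glock--Joos--Kim--K\"uhn--Lichev / Delcourt--Postle, with conflicts given by all configurations on $k$ edges that could arise, and show these conflicts are sparse enough. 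The key point is that any $k$-configuration with $k$ odd must use some gadget "incompletely" or combine gadgets in a way of lower order count. That is the parity obstruction that makes even-size gadgets insufficient to form odd configurations, and it is the second delicate step.
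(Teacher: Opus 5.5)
Your outline has genuine gaps. The key upper-bound lemmas are false as stated, and the conflicts in both lower-bound constructions are not sparse.

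\textbf{Upper bounds.} Your even-$k$ lemma (after deleting $o(n^t)$ edges, no $t$-set lies in two edges) is false. Apply the conflict-free packing method to a single diamond with all $\binom{2r-t}{t}$ $t$-subsets of its vertex set reserved. This gives $(dk+t,k)$-free $r$-graphs made of $\Theta(n^t)$ edge-disjoint diamonds. Any two copies share fewer than $t$ vertices, so there is nothing to chain, and supersaturation cannot help. Packing three edges through a common $t$-set in the same way refutes, for odd $k\ge5$, your claims that $t$-sets of degree $3$ and $t$-components with more than two edges are negligible.

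The odd case cannot do without those claims. A $t$-component with $b$ edges has only $b(\binom rt-1)+1$ shadow $t$-sets, and $b/(b(\binom rt-1)+1)>2/(2\binom rt-1)$ for $b\ge3$. So shadow counting plus charging over components does not give the bound.

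What is missing is that diamonds and larger components are paid for by $t$-sets outside the shadow. The paper first deletes $O(n^{t-1})$ edges to destroy every $(k,b)$-block: $b<k$ edges on at most $db+t-1$ vertices that contain $s$ edges on at most $ds+t$ vertices, where $s\in[b]$ and $s\equiv k\pmod b$. After this cleanup:
\begin{itemize}
\item Every $t$-component with $b$ edges is a tight tree with $b-1$ crossing $t$-sets outside its shadow.
\item The incidence graph between components and $t$-sets (shadow plus crossing incidences) is a forest, since a cycle would glue pieces into a $k$-configuration or a block.
\item Writing $m$ for the number of $t$-components, the forest gives $\binom rt e(H)\le m+\binom nt$. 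Adding $(\binom rt-1)e(H)+m=|\partial_tH|\le\binom nt$ yields the odd bound.
\end{itemize}
For even $k$, each diamond is extended greedily while its candidate crossing sets are covered, and is then assigned two $t$-sets outside the shadow. The graph on the set $\cT_0$ of such $t$-sets, with one edge per diamond, is a forest: in a cycle both $k$ and the number of edges are even, so the remainder $s$ is even and hence realizable, which would produce a block. Thus the number of diamonds satisfies $|\cD|\le|\cT_0|$. This is where $r\ge4$ enters: there are at least $d^2\binom t2\ge3$ candidate crossing sets, so two survive, whereas $(r,t)=(3,2)$ gives only one.

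\textbf{Lower bounds.} Your vertex count for partial Steiner systems is false. For $r=4,t=2$, take four $4$-sets, each pair meeting in a single vertex with the six meeting points distinct; they span $10=2\cdot4+2$ vertices. A random-like packing contains about $n^{dk+t}(n^{-d})^k=n^t$ such configurations, so removing them costs a positive fraction of the edges. You need high-girth partial Steiner systems (Delcourt--Postle, or Glock--Joos--Kim--K\"uhn--Lichev), which is what the paper uses.

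For odd $k\ge5$, packing plain diamonds with only their shadows reserved fails at leading order. In an almost-perfect packing one of two things happens:
\begin{itemize}
\item A crossing $t$-set of some diamond $D$ is covered by an edge $e$ of another copy. Then $D\cup\{e\}$ is a $(3d+t,3)$-configuration, and gluing a further diamond along a covered $t$-set gives a $(5d+t,5)$-configuration, and so on.
\item The crossing sets stay uncovered, so almost-perfectness forces them to be shared by many diamonds. Two diamonds sharing one, plus a single edge through a $t$-set of their union, already span at most $5d+t$ vertices.
\end{itemize}
So using a gadget only partially is not rare, and the plain diamond works only for $k=3$. The missing idea is to reserve these dangerous $t$-sets but force many copies to share them, via random sparse auxiliary $t$-graphs on a blow-up, so their cost vanishes. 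Iterating raises, for every unreserved $t$-set, the minimum size of a tight subgraph covering it above $k/2$, as the packing theorem requires. Only then does parity work: in each critical-order conflict, every participating copy must cover a dangerous set with a tight piece of even size, so no odd configuration arises.
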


The two formulas reveal a striking parity phenomenon. For every
fixed $r\ge4$ and $r>t\ge2$, the limit $\pi(r,t,k)$ depends on
$k$ only through its parity, and not on the particular value of $k$.
For example, $\pi(4,2,k)$ alternates between $1/11$ for odd $k$
and $1/12$ for even $k$.

The only cases not covered by \cref{thm:BES-r-k-t-odd} are
$\pi(3,2,k)$ for even $k$.
Here $\pi(3,2,2)=1/6$ is the packing coefficient, while
$\pi(3,2,4)=\frac{7}{36}$
and 
$\pi(3,2,6)=\frac{61}{330}$
were determined in
\cite{glock_64_2024,glock_k567_2025}
and are both strictly larger than $1/6$.
Our next result shows that this phenomenon persists for every even
$k\ge4$: the usual packing construction is never asymptotically optimal.

\begin{theorem}\label{thm:BES-3-2-even}
For every even $k\ge4$, we have $\pi(3,2,k)>
\frac16.$
\end{theorem}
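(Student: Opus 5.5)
We need a construction: a 3-graph on n vertices with (1/6+c)n^2 edges such that no k edges span at most k+2 vertices (here s=(r-t)k+t = k+2).

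The plan is to start from a near-optimal packing in which every pair of vertices lies in at most one edge, giving about n^2/6 edges, and then add extra edges. In the odd case, the formula 2/(2\cdot3-1)\cdot\frac12 = 1/5 comes from allowing some pairs to be covered twice. I would attempt the same for even k, but only sparsely, so as to gain a small constant.

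A configuration of j edges spanning at most j+2 vertices is what I will call a (j+2,j)-configuration. In a linear 3-graph with large girth, the only such configurations are cycle-like and have j\ge 3. Since the forbidden configurations with even k only come from even structures, the idea is to add a few "double" pairs. Concretely, I would take a random greedy or nibble-type high-girth linear 3-graph (in the style of Glock–Kühn–Lo–Osthus, or the conflict-free hypergraph matching method of Glock–Joos–Kim–Kühn–Lichev and Delcourt–Postle). The target family is a set of small gadgets, each consisting of a bounded number of triples on a small vertex set, chosen so that each gadget has density of edges per covered pair strictly larger than 1/3. An example is two triples sharing a pair, i.e.\ 2 edges on 4 vertices using 5 pairs, which gives ratio 2/5 > 1/3. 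The gadgets are then packed so that different gadgets share at most one vertex. Each gadget itself must avoid spanning k edges on k+2 vertices, and for small gadgets with fewer than k edges this is automatic. The resulting edge count is (1/|\text{pairs}|)\cdot(\text{edges per gadget})\binom n2 \approx n^2/5 \cdot(\ldots). However, combinations of several gadgets can then produce forbidden configurations. These are exactly the obstructions that force the 1/6 bound at parity k even, since pairs of "double pairs" chained together can yield k edges on k+2 vertices.

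To handle this, I would mix gadgets: use a mixture in which only an \varepsilon-fraction of the pairs are covered by the 2-edge gadgets, and the rest by a linear triple system. Then I would apply the conflict-free hypergraph matching theorem, taking as conflicts all bounded-size unions of chosen pieces that form a (k+2,k)-configuration. Each conflict must involve at least two double-pair gadgets, or at least a cycle. The key counting step is to check that the number of conflicts containing a given piece satisfies the degree conditions of that theorem. With an \varepsilon-fraction of gadgets, conflicts using two or more gadgets are suppressed by \varepsilon factors, and since vertices are free (j edges with j+2 vertices means counting gives n^{j+2-\ldots}), the conflicts are of the right order for the theorem to apply. The output is an almost-perfect conflict-free packing, giving (1/6+c\varepsilon)n^2 edges for small \varepsilon>0.

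The main obstacle is the structural step: determining which unions of gadgets and ordinary triples produce k edges on at most k+2 vertices when k is even. In particular, I must ensure that no conflict is "too cheap", meaning a configuration using only one gadget, which would force excluding gadgets altogether. I would first verify, via a vertex-count argument (each added edge contributes at most one new vertex to a connected configuration, and a double pair saves one vertex), that any forbidden k-configuration needs at least two double pairs. If a single gadget plus a linear cycle could suffice, I would instead choose gadgets with more internal structure and additionally require the linear part to have girth larger than k.
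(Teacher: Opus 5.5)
There is a genuine gap, and it is exactly the structural step you deferred: it is not true that a forbidden configuration needs two gadgets. Take a gadget $D=\{abx,aby\}$. Its crossing pair $xy$ is not reserved, so an almost-perfect cover will cover it, typically by a triple $xyz$. Then $D\cup\{xyz\}$ has $3$ edges on $5$ vertices, and an edge through $az$ or $bz$ (also covered) gives $4$ edges on $6$ vertices. For larger even $k$ you keep adding edges through non-shadow pairs of the current vertex set, each adding at most one new vertex, until you have $k$ edges on at most $k+2$ vertices. Only one gadget is involved, and the triples $xyz,azw$ meet in a single vertex, so a girth condition on the linear part does not help.

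In the conflict-free matching theorem this appears as a violated codegree condition. The pair $\{D,xyz\}$ lies in $\Theta(d)$ conflicts $\{D,xyz,T\}$, one for every piece $T$ through $az$. In other words, once $D$ and $xyz$ are chosen, $az$ and $bz$ can never be covered, so the theorem does not apply.

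The other options for $xy$ fail too. Covering $xy$ by a second diamond, as its spine or as a side pair, again gives $4$ edges on $6$ vertices. For $k=4$ these are size-two conflicts of degree $\Theta(\varepsilon d)$, which the conflict-free matching theorems do not handle. Leaving $xy$ uncovered costs a sixth pair per diamond, because two diamonds cannot share a crossing pair (that would give $4$ edges on $6$ vertices). This sharing is precisely the trick that works for odd $k$, and without it the ratio falls back to $2/6=1/3$.

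As a sanity check, nothing in your argument uses $(r,t)=(3,2)$. Run verbatim with $r$-uniform diamonds, it would beat $1/(t!\binom rt)$ for even $k$ and $r\ge4$, contradicting \Cref{thm:BES-r-k-t-odd}.

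What is needed is to cover the crossing pair \emph{inside} the gadget and then protect the pairs this exposes. The paper's seed $\mathcal S_1$ is a triangle $abc$ with diamonds $\{ax_1y_1,bx_1y_1\}$, $\{bx_2y_2,cx_2y_2\}$ and $\{cx_3y_3,ax_3y_3\}$. The single edge $abc$ covers all three crossing pairs, so $7$ edges use $18$ reserved pairs and $3|E_1|-|J_1|=3>0$. Every tight subgraph with at least three edges contains $abc$ and has odd size, so no even configuration lives inside the seed.

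The exposed pairs such as $cx_1$ (your $az$) are left unreserved. \Cref{lemma:glock-packing} permits this provided every unreserved pair $p$ has $\mu_p>k/2$. The paper raises this girth recursively by hanging a diamond $\{ux_py_p,vx_py_p\}$ on each unreserved pair $p=uv$ and reserving $\binom{p\cup X_p}{2}$. Each such step costs exactly $2$ edges per $6$ pairs, so it preserves the excess $3$, the parity property, and $\mu_p\ge 2\ell+1$. Taking $\ell=\lceil k/4\rceil$ then gives $\pi(3,2,k)\ge\frac{1}{6}+\frac{1}{2|J_\ell|}$.

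Your fallback of ``gadgets with more internal structure'' points in this direction. However, the proof rests on three specific ingredients: a coverer of the crossing pair inside the gadget, odd parity of all tight subconfigurations, and large $\mu_p$ on the unreserved pairs. The proposal supplies none of them.
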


\Cref{thm:BES-3-2-even} shows that this is a genuine
exceptional family, not merely a limitation of our method.
We return to it in \Cref{sec:conclusion}.

\subsection{An application to the Erd{\H{o}}s--Gy\'arf\'as--Shelah
problem}
\label{subsec:Erdos-Shelah-application}

Our results also have an application to a generalized Ramsey problem
introduced by Erd{\H{o}}s and Shelah
\cite{erdos_problems_1975}
and subsequently studied systematically, in the graph case, by
Erd{\H{o}}s and Gy\'arf\'as
\cite{erdos_gyarfas_1997}.
This problem is a natural extension of classical Ramsey theory. For integers $r\ge2$, $n\ge p\ge r+1$, and
$1\le q\le\binom{p}{r}$, a \emph{$(p,q)$-coloring} of
$K_n^{(r)}$ is a coloring
$\chi:\binom{[n]}{r}\longrightarrow\mathcal C$
such that, for every $S\in\binom{[n]}{p}$,
the edges in $\binom{S}{r}$ receive at least $q$ distinct colors.
We denote by
$\operatorname{ES}^{(r)}(n;p,q)$
the minimum number of colors used by a $(p,q)$-coloring of
$K_n^{(r)}$.

The case $q=2$ is the inverse form of the classical multicolor Ramsey
problem. More generally, a central question is to determine the order
of magnitude of
$\operatorname{ES}^{(r)}(n;p,q)$
for fixed $r,p,q$, and in particular to determine its polynomial
exponent. For an overview of this problem and related developments,
see~\cite[Section~7]{mubayi_suk_2020};
see also~\cite{
bennett_delcourt_li_postle_2026,bennett_dudek_english_2025,conlon_fox_lee_sudakov_2015}.

Since
$\operatorname{ES}^{(r)}(n;p,q)\le\binom{n}{r}$,
the largest possible polynomial exponent is $r$.
For fixed integers $r\ge2$ and $p\ge2r$, the first value of $q$ for
which this exponent is attained is
    $q_{\max}^{(r)}(p)
    :=
    \binom{p}{r}
    -
    \left\lfloor\frac{p}{r}\right\rfloor
    +
    2.$
More precisely,
$\operatorname{ES}^{(r)}(n;p,q_{\max}^{(r)}(p))
    =
    \Theta(n^r)$,
whereas
$\operatorname{ES}^{(r)}(n;p,q_{\max}^{(r)}(p)-1)   =o(n^r)$~\cite{bennett_delcourt_li_postle_2026,erdos_gyarfas_1997}.

We focus on the case where
$p=r(k+1)$ and
$q=\binom{p}{r}-k+1$, and $r,k\ge2$.
In this case, we have $q=q_{\max}^{(r)}(p)$.
When $r=2$, 
Bennett, Cushman, and Dudek
\cite{bennett_cushman_dudek_2025}
established an exact asymptotic connection between this family and the
quadratic Brown--Erd{\H{o}}s--S\'os problem. For every even integer
$p\ge6$, they proved that
    $\lim_{n\to\infty}
    \frac{
        \operatorname{ES}^{(2)}
        \bigl(n;p,q_{\max}^{(2)}(p)\bigr)
    }{n^2}
    =
    \frac12
    -
    \pi\left(4,2,\frac p2-1\right).$
Prior to the present work, in the graph case, the exact coefficient
at this threshold had been determined only for
$p\in\{6,8,10,12,14,16,18\}$
\cite{bennett_cushman_dudek_2025,glock_k567_2025,
pikhurko_sun_2026}.

By extending this connection
to arbitrary uniformity and combining it with our main results,
we obtain the following.

\begin{theorem}\label{thm:generalized-ramsey}
For all integers $r,k\ge2$, put
$p=r(k+1)$ and $q=\binom{p}{r}-k+1$. Then
\begin{align*}
    \lim_{n\to\infty}
    \frac{
        \operatorname{ES}^{(r)}(n;p,q)
    }{n^r}
    =
    \begin{cases}
        \frac{1}{r!}\left(1-\frac{2}{2\binom{2r}{r}-1}\right),
        & \text{if $k$ is odd},\\        \frac{1}{r!}\left(1-\frac{1}{\binom{2r}{r}}\right),
        & \text{if $k$ is even}.
    \end{cases}
\end{align*}
\end{theorem}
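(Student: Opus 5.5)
The limit in \cref{thm:generalized-ramsey} should equal $\frac{1}{r!}-\pi(2r,r,k)$. Indeed, $\pi(2r,r,k)=\frac{2}{r!(2\binom{2r}{r}-1)}$ for odd $k$ and $\frac{1}{r!\binom{2r}{r}}$ for even $k$, since $2r\ge4$ and so \cref{thm:BES-r-k-t-odd} applies in both parities. For $r=2$ this recovers the Bennett--Cushman--Dudek identity. So the plan is to prove
\[
\operatorname{ES}^{(r)}(n;p,q)=\binom nr-f^{(2r)}\bigl(n;rk+r,k\bigr)+o(n^r),
\]
and then invoke \cref{thm:BES-r-k-t-odd} with $(2r,r,k)$ in place of $(r,t,k)$. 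Note that $(2r-r)k+r=rk+r=p$.

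\emph{Reformulation.} For a coloring $\chi$ with color classes $C_1,\dots,C_m$, write the \emph{savings} as $\sigma(\chi)=\sum_i(|C_i|-1)$, so that $m=\binom nr-\sigma(\chi)$. For a $p$-set $S$, the number of colors on $\binom Sr$ equals $\binom pr$ minus the local savings $\sum_i(|C_i\cap\binom Sr|-1)^+$. Hence $\chi$ is a $(p,q)$-coloring if and only if every $p$-set has local savings at most $k-1$.

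\emph{Upper bound on $\operatorname{ES}$ (construction).} Take an extremal $2r$-graph $H$ on $[n]$ with no $k$ edges spanning at most $p$ vertices. For each $h\in E(H)$, split $h$ into two disjoint $r$-sets and give both the same new color. All other $r$-sets receive distinct fresh colors. A $p$-set $S$ with local savings at least $k$ contains $k$ monochromatic pairs, hence $k$ edges of $H$ inside $S$, which is a contradiction. Two points need care. First, the chosen $r$-sets must be distinct across edges of $H$. Second, a color must never be reused, so that all savings come from pairs. I would first clean $H$ so that each $r$-set lies in a bounded number of edges and then choose the splittings greedily or randomly; alternatively I would delete $o(n^r)$ edges, using that $r$-sets of high codegree in $H$ are rare. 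This gives $\operatorname{ES}\le\binom nr-(\pi(2r,r,k)-o(1))n^r$.

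\emph{Lower bound on $\operatorname{ES}$ (the main obstacle).} Given a $(p,q)$-coloring, I would extract from each class $C_i$ a family of $|C_i|-1-(\text{error})$ \emph{disjoint} same-colored pairs $\{e,f\}$, and view each pair as an edge $e\cup f$ of a $2r$-graph $H_\chi$. A tree-like choice, pairing each edge of $C_i$ with a fixed disjoint representative when one exists, recovers essentially all of the savings. Any $k$ edges of $H_\chi$ spanning at most $p$ vertices give local savings at least $k$ in some $p$-set, so $H_\chi$ is admissible and $\sigma(\chi)\le f^{(2r)}(n;p,k)+\text{error}$.

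The hard part is bounding the error coming from same-colored pairs that intersect, and from large classes. A class with $|C_i|\ge k+1$ whose members are spread out costs little, because we can pair within it. But many pairs $e,f$ with $|e\cup f|<2r$ produce extra savings at low vertex cost. I would show such pairs number $o(n^r)$: $k$ of them together with padding fit inside a $p$-set and give savings at least $k$, with room to spare. Turning this room to spare into an $O(n^{r-1})$ or $o(n^r)$ bound via a Brown--Erd\H{o}s--S\'os counting argument, or a removal-lemma argument, is the step I expect to require the most work.
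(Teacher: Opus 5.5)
Your reduction to $\frac{1}{r!}-\pi(2r,r,k)$ and your pairing construction for the upper bound on $\operatorname{ES}^{(r)}(n;p,q)$ match the paper. However, the matching lower bound on $\operatorname{ES}^{(r)}(n;p,q)$, which is half of the theorem, is not proved in your proposal, and the obstacle you single out there is not a real one.

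The missing idea is that the two same-colored $r$-sets never need to be disjoint. Take a $(p,q)$-coloring with $m$ colors and fix a representative $A_c$ of each color $c$. For every other $r$-set $A$ of color $c$, take an \emph{arbitrary} $2r$-set $F_{c,A}\supseteq A_c\cup A$, padding with extra vertices when $A\cap A_c\neq\varnothing$. The labelled multi-hypergraph $\mathcal M$ of these sets has exactly $\binom{n}{r}-m$ members. It is $(p,k)$-free even counting multiplicity. Indeed, suppose $k$ members, $m_c$ of them from color $c$, lie in a $p$-set $S$. Then $S$ contains $A_c$ and $m_c$ further $r$-sets of color $c$. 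So $\binom{S}{r}$ has at least $\sum_c m_c=k$ repetitions and hence at most $\binom{p}{r}-k=q-1$ colors.

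Padding only enlarges the sets, so intersecting pairs cost nothing. No removal lemma or separate count of intersecting pairs is needed. Large classes are also a non-issue: any $k+1$ $r$-sets span at most $r(k+1)=p$ vertices, so no color class has more than $k$ members. The only loss comes from coinciding supports. If at least $k$ members had repeated supports, then $k$ of them would lie on at most $\lceil k/2\rceil$ supports, spanning at most $2r\lceil k/2\rceil\le p$ vertices, a contradiction. So passing to the underlying simple $2r$-graph discards at most $k-2$ members, which gives $\operatorname{ES}^{(r)}(n;p,q)\ge\binom{n}{r}-f^{(2r)}(n;p,k)-(k-2)$.

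Your sketch for the intersecting pairs is also not right as stated. $k$ vertex-disjoint intersecting pairs can span $k(2r-1)>r(k+1)$ vertices unless $r=k=2$. So they need not fit in a $p$-set, and that route would require a genuine counting argument, which padding to $2r$-sets makes unnecessary.

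In the upper bound, your argument does not secure that the chosen halves are distinct. Bounded codegree is automatic: codegrees are at most $k-1$, since $k$ edges through a common $r$-set span at most $p$ vertices. But this does not by itself let a greedy or random splitting succeed. An edge can be $r$-adjacent to up to $k-2$ others, each able to block one of its $\frac12\binom{2r}{r}$ splittings, and $k-2$ may exceed that number.

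The right tool is the tree structure of $r$-components. Delete $O(n^{r-1})$ edges as in the proof of \cref{thm:f-to-g-cleanup}, with $(r,t)$ replaced by $(2r,r)$; alternatively, work directly with a $\xi^{(2r)}$-extremal graph, as the paper does. Then the edges of every $r$-component can be ordered so that each new edge has exactly $r$ vertices outside the earlier ones. Splitting each new edge so that both halves contain a new vertex makes all halves distinct. Your variant, using $f^{(2r)}$ plus this cleanup instead of $\xi^{(2r)}$, is legitimate and avoids needing the lower-bound constructions to hold for $\xi^{(2r)}$.
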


\subsection{Proof strategy}
\label{subsec:proof-strategy}
Both coefficients arise from counting $t$-sets. The lower
bounds seek to use these sets as efficiently as possible, while the
upper bounds show that greater efficiency would create a forbidden
configuration. 

\paragraph{Lower bounds: finding seeds and apply packing theorem.}
We begin with the lower-bound construction, where most of the technical difficulty lies.

\medskip
\noindent\emph{Use diamonds.}
A single edge contains $\binom rt$ different $t$-sets,
so packing edges without repeating a $t$-set suggests
$\binom nt/\binom rt$ edges. Two edges meeting in exactly $t$ vertices,
which we call a \emph{diamond}, contain only $2\binom rt-1$ different
$t$-sets. Packing diamonds therefore suggests the larger bound
$2\binom nt/(2\binom rt-1)$. The first bound is attained by known
high-girth partial Steiner systems~\cite{delcourt2022finding}.
Our construction attains the second when $k$ is odd. The obstacle
is that edges from different diamonds may together form a forbidden
configuration, even when their $t$-sets are not repeated.

\medskip
\noindent\emph{A baby example: reserve the extra pair only once.}
Consider the triples $abx$ and $aby$. They contain five pairs,
but not $xy$. A triple $xyz$, with $z$ new, repeats none of these
pairs, yet the three triples span only five vertices: a forbidden
configuration for $k=3$. To prevent this overlap, we could reserve
$xy$ as well, excluding it from other packed copies. But reserving
six pairs for two triples reduces the ratio from $2/5$ to $1/3$,
losing the improvement over Steiner triple systems. Instead, let many diamonds share the same extra pair. Fix $x,y$, take pairwise disjoint pairs $\{a_i,b_i\}$ disjoint from $\{x,y\}$,
and use
    $E_m=\{a_i b_i x,a_i b_i y:i\in[m]\}.$
We reserve all pairs contained in these triples, together with $xy$.
There are $2m$ triples and $5m+1$ reserved pairs, so the ratio tends
to $2/5$. The extra pair is reserved once for the whole block,
rather than once for each diamond.

The example also explains parity. Once $x,y$ are present, a whole
diamond contributes two edges and two new vertices; taking only
one of its triples contributes the same two vertices but just one
edge. In fact, a collection of $\ell\ge2$ triples in this block
spans at most $\ell+2$ vertices only if it is a union of whole
diamonds. Its size is then even, so the block contains no forbidden
configuration for odd $k$.

\medskip
\noindent\emph{From one block to the general construction.}
For larger $k$, more complicated configurations can involve several
packed blocks. Further $t$-sets must therefore be reserved to control
these interactions. 
A packing theorem of Glock, Joos, Kim, K\"uhn,
Lichev, and Pikhurko~\cite{glock_64_2024} reduces the lower bound to
constructing a suitable finite seed with a large packing ratio.
Previous lower-bound constructions in this framework relied on
finding suitable seeds for particular parameter ranges and
verifying their local properties
\cite{glock_64_2024,glock_k567_2025,pikhurko_sun_2026}. In our proof, we construct these seeds via a iterative probabilistic argument.

In each step, we take a blow-up of the current seed, randomly selects
copies so that many share the extra $t$-sets to be reserved,
and deletes those involved in unwanted configurations.
Parity is crucial to controlling the deletion losses:
in the critical overlap patterns, each participating copy must
contribute an even number of edges, so their union cannot be
an odd forbidden configuration.
This procedure strengthens the local conditions while preserving
the required sparsity and losing an arbitrarily small amount in
the packing ratio; see \Cref{lemma:seed-increment}.
The resulting seed will be suitable for the next iteration. Starting from a single diamond, finitely many
iterations therefore produce the required seed, with arbitrarily
small total loss in the initial ratio.
The packing theorem then gives the lower bound.

\paragraph{Upper bounds: counting $t$-shadows and extra $t$-sets.}
Before the counting argument, we delete some edges so that the graph forbids more structures. To be more specific, starting with an $n$-vertex $((r-t)k+t,k)$-free $r$-graph $H$,
we delete $O(n^{t-1})$ edges to impose additional local
sparsity conditions; see \Cref{thm:f-to-g-cleanup}.

\medskip
\noindent\emph{When $k$ is odd.}
We group edges into components by joining two edges when they
share at least $t$ vertices. After the deletions, each component
has fewer than $k$ edges and can be built by adding one edge at
a time, with exactly $t$ old vertices, all contained in an earlier
edge. Each addition therefore contributes $\binom rt-1$ new $t$-sets.
A component with $b$ edges contains $b(\binom rt-1)+1$ different $t$-shadows, i.e. $t$-sets that are contained in some edge. However, this count ignores $t$-sets contained in no edge of the hypergraph. Thus, we shall take the non-shadow $t$-sets into account. The main difficulty is that these $t$-sets may be shared by different components, and they might be shadows of the other components, so we analyze the incidence between components and non-shadows.

For example, in the case where $(r,t)=(3,2)$, we recover these missing sets by considering \emph{crossing sets} like $xy$ in the a diamond $\{abx,aby\}$:
they lie across two overlapping edges but in no edge of their
component. They may still appear in other components, so simply
adding the local counts would overcount. We consider an auxiliary incidence graph records all these uses. A cycle in the auxiliary graph would join the components. Since two neighboring components meet at a crossing set, the union of these components will be tight. This would either violates the $((r-t)k+t,k)$-freeness, or the freeness of the structure we forbid from the deletion at the beginning.
The main issue is that this is not always true if we select all the crossing sets in every component. Instead, we select the edges from these components carefully, picking only $b-1$ sets for a component of size $b$. Counting the number of edges and vertices in the auxiliary graph gives the upper bound.

\medskip
\noindent\emph{Why the upper bound improves for even $k$.}
For even $k$ and $r\ge4$, we obtain more missing $t$-sets. We will show that each diamond is associated with two distinct non-shadows.
For each diamond $D$, we may simply pick two non-shadows from the vertices spanned by $D$ if we can. Otherwise, there are many non-shadows of this diamond that are covered by some other edges, so we can extend this piece by adding some of these edges. Note that this is different from the idea of picking components in the case when $k$ is odd. By keep extending the piece, we associate a subgraph $\Psi_D$ and two non-shadows $T_{1,D},T_{2,D}$ to $D$. Now, similar as before, we may consider the incidence auxiliary graph between $\Psi_D$ and non-shadows. With a technical proof, we can show that it is acyclic, and counting the number of edges and vertices of it again gives the upper bound. We remark that the actual process for picking $\Psi_D$ and $T_{1,D},T_{2,D}$ in \cref{subsec:upper-even-k} will be slightly different from the outline above to make the writeup easier.

\medskip

\noindent\textbf{Notation.} Unless otherwise specified, whenever integers $r>t\ge2$ and $k\ge2$
are fixed, we put $d=r-t$.
For an $r$-graph $H$, we write $V(H)$ for the set of vertices spanned
by $H$ and $E(H)$ for its edge set. 
We write
$v(H)=\abs{V(H)}$ and $e(H)=\abs{E(H)}$, and identify $H$ with
$E(H)$ when convenient; thus $|H|$ also denotes its number of edges.
For integers $s$ and $\ell\ge2$, we say that $H$ is
\emph{$(s,\ell)$-free} if it contains no $\ell$ distinct edges
spanning at most $s$ vertices.
We say that two edges $e_1,e_2\in E(H)$ form a \emph{diamond} if
$\abs{e_1\cap e_2}=t$.

\medskip

\noindent\textbf{Organization of the paper.}
The upper bounds in \Cref{thm:BES-r-k-t-odd}
are proved in \Cref{sec:upperbound}.
The corresponding lower bound in \Cref{thm:BES-r-k-t-odd}, together with
\Cref{thm:BES-3-2-even}, are proved in \Cref{sec:lowerbound}.
We prove \Cref{thm:generalized-ramsey} in \Cref{sec:ramsey} and conclude with the remaining triple-system questions in \Cref{sec:conclusion}.

\paragraph{AI usage.} The lower bounds were obtained entirely by the authors without AI assistance.  The authors used ChatGPT‑5.6 as a collaborative tool in establishing the upper bounds and preparing the manuscript. The authors independently verified all mathematical arguments and take full responsibility for the paper’s content.

\section{The upper bound}\label{sec:upperbound}
Throughout this section, let $r>t\ge2$, put $d=r-t$, and fix $k\ge2$.
We will first apply a cleanup by deleting a negligible number of edges to exclude certain small
configurations. Unlike the deletion arguments in \cite{delcourt_postle_limit_2024,shangguan_degenerate_2023}, we only delete edges to destroy the subgraphs in \Cref{def:k-rootable-block}.

\begin{definition}
        Let $B$ be a subgraph of $H$ with $b$ edges. We say that a positive integer $\gamma\leq b$ is \emph{realizable} in $B$ if there exists a subgraph $B_\gamma$ with $\gamma$ edges such that $v(B_\gamma)\leq d\gamma+t$. In this case, we say that $B_\gamma$ is a subgraph realizing $\gamma$.
    \end{definition}
\begin{definition}[$(k,b)$-block]\label{def:k-rootable-block}
Let $1\le b<k$ and set $s\in\{1,\dots, b\}$ to be the remainder of $k$ divided by $b$, where we use the convention $s=b$ when $b\mid k$.
A nonempty $b$-edge subgraph $B$ is a
\emph{$(k,b)$-block} if 
\begin{enumerate}
    \item  $v(B)\le db+t-1$, and 
    \item $s$ is realizable in $B$.
\end{enumerate}
For any $B_s$ realizing $s$ in $B$ and any $R\in\binom{V(B_s)}{t-1}$, we say that $R$ is a \emph{root} of $B$.
\end{definition}

We define $g^{(r)}\bigl(n;(r-t)k+t,k\bigr)$ to be the maximum number of edges in an $n$-vertex $r$-graph which is $(dk+t,k)$-free and contains no $(k,b)$-block for any $b< k$.

\begin{theorem}
    \label{thm:f-to-g-cleanup}
For all $r>t\ge2$, $k\ge2$,
$f^{(r)}\bigl(n;(r-t)k+t,k\bigr)
\le g^{(r)}\bigl(n;(r-t)k+t,k\bigr)
 +(k-1)^2\binom{n}{t-1}.$
\end{theorem}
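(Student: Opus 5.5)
The plan is to delete, for every $(t-1)$-set $R$ and every $b\in\{1,\dots,k-1\}$, a small set of edges that destroys every $(k,b)$-block having $R$ as a root. The budget is at most $k-1$ edges per pair $(R,b)$, which gives at most $(k-1)^2$ deletions per $(t-1)$-set. Since every $(k,b)$-block has some root, every block of the original $(dk+t,k)$-free $r$-graph $H$ is destroyed. Deleting edges keeps $H$ $(dk+t,k)$-free and creates no new blocks, because a block of a subgraph is a block of $H$. The remaining graph is therefore counted by $g^{(r)}$, and $|H|\le g^{(r)}+(k-1)^2\binom{n}{t-1}$.

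The key step is a packing bound for blocks sharing a root. Fix $b<k$ and write $k=qb+s$ with $s\in\{1,\dots,b\}$, so $q=(k-s)/b\ge1$; this uses $b<k$, and in particular covers the case $b\mid k$. Fix $R\in\binom{V(H)}{t-1}$. I claim $H$ has no $q+1$ pairwise edge-disjoint $(k,b)$-blocks $B^1,\dots,B^{q+1}$ all having root $R$. Suppose such blocks exist, and note that $R\subseteq V(B_s)\subseteq V(B^i)$ for each $i$. Take $B^1\cup\dots\cup B^q$ together with a subgraph $B^{q+1}_s$ of $B^{q+1}$ that realizes $s$ and contains $R$. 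These are $qb+s=k$ distinct edges, since the blocks are edge-disjoint. Each piece contains $R$, so the union spans at most
\[
q(db+t-1)-(q-1)(t-1)+\bigl(ds+t-(t-1)\bigr)=d(qb+s)+t=dk+t
\]
vertices. This contradicts $(dk+t,k)$-freeness.

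Now, for each pair $(R,b)$, choose a maximal family of pairwise edge-disjoint $(k,b)$-blocks of $H$ with root $R$. By the claim, the family has at most $q$ members, so it contains at most $qb=k-s\le k-1$ edges. Delete all of these edges. By maximality, every $(k,b)$-block of $H$ rooted at $R$ shares an edge with the family, so no such block survives the deletion. Summing over $b\in[k-1]$ and over all $R$ gives at most $(k-1)^2\binom{n}{t-1}$ deleted edges.

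I expect the main obstacle to be the vertex count in the claim. Two points need checking there: that $R$ lies in every block, and that the realizing subgraph $B^{q+1}_s$ can be chosen to contain $R$. Both hold because a root is by definition a $(t-1)$-subset of $V(B_s)$ for some realizing subgraph $B_s$.
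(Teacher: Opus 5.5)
Your proof is correct and is essentially the paper's argument: the same bound that $q+1$ edge-disjoint $(k,b)$-blocks sharing a root $R$ would give $k$ edges on at most $dk+t$ vertices, hence at most $k-1$ deleted edges per pair $(R,b)$. The only difference is the deletion scheme. The paper deletes blocks greedily one at a time and assigns each deleted block a single root, while you take, for each $(R,b)$, a maximal edge-disjoint family of blocks in the original $H$; both schemes destroy every block within the same $(k-1)^2\binom{n}{t-1}$ budget.
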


\begin{proof}
Fix any $(dk+t,k)$-free $r$-graph $H$ on $n$ vertices.
Greedily delete all edges of a $(k,b)$-block whenever one remains,
and assign one root to each deleted block. The deleted blocks are
edge-disjoint. Fix a root $R$ and a size $b<k$, and put
$a=\lfloor(k-1)/b\rfloor$ and $s=k-ab\in[b]$.
If there are $a+1$ deleted $(k,b)$-blocks with root $R$, take all
edges from $a$ of them and an $s$-edge subgraph realizing $s$ that contains $R$ from the last one. Their union has $ab+s=k$ edges
and at most $a(db+t-1)+(ds+t)-a(t-1)=dk+t$ vertices, since all the pieces contain $R$. This contradicts
the freeness of $H$.

Thus at most $ab\le k-1$ edges are deleted for each pair $(R,b)$.
There are at most $\binom{n}{t-1}$ roots and $k-1$ possible sizes,
so the total number of deleted edges is at most
$(k-1)^2\binom{n}{t-1}$.
\end{proof}

\subsection{The upper bound for odd $k$}
By \cref{thm:f-to-g-cleanup}, it is sufficient to show the following.

\begin{theorem}\label{thm:BES-r-k-t-odd-upper}
For all $2\le t<r$ and $k\geq 2$,
$g^{(r)}\bigl(n;(r-t)k+t,k\bigr)
\le
\frac{2}{2\binom{r}{t}-1}\binom{n}{t}.$
\end{theorem}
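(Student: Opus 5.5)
We take an $n$-vertex $r$-graph $H$ that is $(dk+t,k)$-free and contains no $(k,b)$-block for any $b<k$, and we aim to prove $e(H)\le \frac{2}{2\binom rt-1}\binom nt$. To do this we give every edge a weighted share of $t$-sets, counting both shadows and suitable non-shadows.

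\textbf{Step 1: components.} Two edges are adjacent if they share at least $t$ vertices; let the components of this relation be $C_1,\dots,C_m$. We first show that each component $C$ has $b=|C|<k$ edges and satisfies $v(C)=db+t$, and that it can be built edge by edge so that each new edge meets the previous union in exactly $t$ vertices, all contained in one earlier edge. We argue by induction along a connected ordering. If an added edge had more than $t$ old vertices, or its $t$ old vertices were spread over several earlier edges, then some connected subgraph $B$ with $b'$ edges would have $v(B)\le db'+t-1$. Taking $b'<k$ (possible by choosing a minimal such subgraph, or by truncating at $k$ edges), $B$ would contain a $(k,b')$-block, because connected subgraphs of it realize every size. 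Here we must check that the required $s$ is realizable: every connected subgraph with $\gamma$ edges spans at most $d\gamma+t$ vertices. If the component had $k$ or more edges, its first $k$ edges would span at most $dk+t$ vertices, contradicting freeness. It follows that the shadow of $C$ has exactly $b(\binom rt-1)+1$ distinct $t$-sets. Distinct components share no $t$-set in their shadows, since such a shared set would make two edges adjacent.

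\textbf{Step 2: non-shadows.} For each component with $b\ge2$ we select $b-1$ crossing $t$-sets. Each lies inside $V(C)$, in fact inside the union of two edges forming a diamond in the build order, and none lies in the shadow of $C$. We choose one per added edge: when edge $e_j$ attaches at a $t$-set $T\subset e_i$, we take a $t$-set that meets both $e_i\setminus T$ and $e_j\setminus T$. If each such set were a non-shadow of all of $H$ and distinct from the others, the count would be $\binom nt\ge\sum_C\bigl(b_C(\binom rt-1)+1+(b_C-1)\bigr)=\sum_C b_C\binom rt$. That yields only $\binom rt^{-1}$; we need roughly $b_C/2$ extra sets per component, which is where parity enters. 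The correct bookkeeping follows the baby example in the introduction: crossing sets may be shared among components, and may also be shadows of other components. We therefore form a bipartite incidence graph $\mathcal I$ between components and the selected crossing sets. The target inequality is $\#\{\text{shadows}\}+\#\{\text{non-shadow vertices of }\mathcal I\}\ge \sum_C\bigl(b_C(\binom rt-\tfrac12)\bigr)$. This follows if $\mathcal I$, augmented with edges from components to other components whose shadow contains their selected set, is a forest, and if each component has at least $b_C-1$ incident sets and contributes $+1$ from the shadow, so that $\mathcal I$ has $e(\mathcal I)\le v(\mathcal I)-1$ per tree. Counting per tree then gives each component an average of $b_C(\binom rt-1)+1+(b_C-1)-(\text{tree overhead})$, and we need this to be at least $b_C(\binom rt-\frac12)$. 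For single-edge components ($b=1$) this fails locally. We handle it by noting that a crossing set of a diamond cannot be covered by an isolated edge: otherwise three edges would span $3d+t$ vertices while being tight, and iterating would create a $k$-configuration whose edge count is odd. The parity of $k$ should let us pair the deficits.

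\textbf{Step 3: acyclicity (main obstacle).} Suppose $\mathcal I$ contains a cycle $C_1,T_1,C_2,\dots,C_\ell,T_\ell$. Gluing the components along the shared crossing sets gives a subgraph whose vertex count is at most $d\cdot(\#\text{edges})+t-1$, because each shared $t$-set saves $t$ vertices and the closing of the cycle saves one more. Taking a sub-collection of edges of size exactly $k$, or else a $(k,b)$-block with $b<k$, gives a contradiction. The difficulty is the size $k$: the union of the relevant edges (a path of diamonds inside each component, from one crossing set to the next) may exceed or miss $k$. Here we use the fact that $k$ is odd. Each component contributes pieces that are unions of whole diamonds, hence of even size, plus realizable odd sizes via single edges. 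So every size up to the total is realizable with excess $\le t$, and the block definition with remainder $s$ captures exactly the overshoot. Making this choice of $b-1$ crossing sets per component (a spanning-tree-like selection), so that cycles always yield tight subconfigurations of every needed size, is the step we expect to be hardest. Once acyclicity is established, summing $|V(\mathcal I)|-|E(\mathcal I)|\ge$ number of trees together with the shadow counts gives $e(H)(\binom rt-\frac12)\le\binom nt$, which is the claimed bound.
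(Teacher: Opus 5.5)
Your Step 1 and the choice of $b_C-1$ crossing sets per component match the paper. However, the two steps that actually produce the bound are missing, and the mechanisms you propose for them are wrong.

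\textbf{The counting.} The factor $\frac12$ has nothing to do with parity. The statement holds for every $k\ge2$, and parity is never used. Also, if the crossing sets were distinct non-shadows you would get the \emph{stronger} bound $\binom nt/\binom rt$, so your ``only'' is backwards.

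The missing idea is to put all $t$-sets and both kinds of incidence into one graph. Let $G$ be bipartite between the $m$ components and $\binom{V(H)}{t}$, joining $C$ to every set of $\partial_t C$ and to its $b_C-1$ crossing sets. Then $e(G)=\binom rt e(H)$, so acyclicity of $G$ gives $\binom rt e(H)\le m+\binom nt$. Adding the disjoint-shadow count $(\binom rt-1)e(H)+m\le\binom nt$ cancels $m$ and yields the bound. Per component the two counts are $b_C\binom rt-1$ and $b_C(\binom rt-1)+1$, which average to exactly $b_C(\binom rt-\frac12)$. Singletons are short in the first count but in surplus in the second, so no pairing of deficits is needed.

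Your proposed repair is in fact false for $k\ge4$. Take a diamond $\{e_1,e_2\}$ with crossing set $p$ and an edge $f$ with $f\cap(e_1\cup e_2)=p$. Then $|f\cap e_i|=t-1$, so $f$ is an isolated component covering $p$. These $3$ edges span $3d+t$ vertices yet contain no $(dk+t,k)$-configuration and no $(k,b)$-block.

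Your component-to-component edges also create spurious cycles. For $(r,t)=(3,2)$ and $k\ge6$, the edges $u_1v_1x,u_1v_1y,u_2v_2x,u_2v_2y,xyw$ form a valid $H$. There $xy$ is a crossing set of two components and a shadow of a third, which gives a $4$-cycle in your augmented graph; in $G$ it is just a star at $xy$.

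\textbf{Acyclicity.} You correctly see that a cycle glues to some $B$ with $v(B)\le db+t-1$. But the key fact, that \emph{every} $\gamma\le b$ is realizable in $B$, is left open. Your heuristic (pieces are unions of whole diamonds, hence even, so use oddness of $k$) has no basis, since the pieces can have any size.

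The needed argument runs as follows. For a cycle $p_1,C_1,p_2,\dots,p_z,C_z,p_1$ in $G$, let $F'_i\subseteq C_i$ be an inclusion-minimal $t$-connected subgraph containing a support of $p_i$ and of $p_{i+1}$. The support is an edge if the set lies in $\partial_t C_i$, and its diamond if it is a crossing set; put $\lambda_i=e(F'_i)$. Every $t$-connected $\lambda$-edge subgraph of a component spans exactly $d\lambda+t$ vertices. Hence $\gamma$ is realized by $F'_1\cup\dots\cup F'_{\eta-1}$ together with a $t$-connected $u$-edge piece of $F'_\eta$ grown from the support of $p_\eta$.

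This fails only if $u=1$ and $p_\eta$ is a crossing set. To handle that case:
\begin{itemize}
\item Take $\lambda_1=\max_i\lambda_i$. This is at least $2$, since $p_i$ cannot lie in both $\partial_t C_{i-1}$ and $\partial_t C_i$.
\item Orient the cycle so that $p_2$ is a shadow set when $\lambda_1=2$; this is possible because two distinct crossing sets have distinct diamonds.
\item Replace $F'_1$ by a $(\lambda_1-1)$-edge $t$-connected subgraph still supporting $p_2$, and add the whole diamond of $p_\eta$.
\end{itemize}
With all sizes realizable, $b\ge k$ gives a $(dk+t,k)$-configuration and $b<k$ makes $B$ a $(k,b)$-block. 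This works for every $k$.
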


\begin{proof}
Let $H$ be a $(dk+t,k)$-free $r$-graph on $n$ vertices without $(k,b)$-blocks for any $b<k$. Our goal is to prove that $e(H)\leq \frac{2}{2\binom{r}{t}-1}\binom{n}{t}$. We say that two edges in $H$ are $t$-adjacent if they meet in at least $t$ vertices, and we call the connected subgraphs and the connected components under this adjacency $t$-connected subgraphs and $t$-components, respectively. Every $t$-connected $b$-edge subgraph $F$ has $v(F)\le r+(b-1)(r-t)=db+t.$
Together with the $(dk+t,k)$-freeness, every $t$-component has fewer than $k$ edges.
In fact, we must have $v(F)=db+t$ for any $t$-connected $b$-edge subgraph $F$.
Otherwise, $F$ would form a $(k,b)$-block if $v(F)\le db+t-1$. This is because any $s$-edge connected subgraph of $F$ would satisfy the second condition in the definition of a $(k,b)$-block.
However, this contradicts the assumption that $H$ has no $(k,b)$-block. 

It follows that any $t$-component $F$ admits a \emph{tree structure}. That is, we can order the edges in $E(F)$ as $e_1,\ldots,e_b$, so that 
\begin{enumerate}
    \item for each $i=2,\dots,b$, there exists $j<i$ such that $e_i,e_j$ form a diamond, and
    \item $e_i\cap e_j=e_i\cap (e_1\cup\dots\cup e_{i-1})$.
\end{enumerate}

Recall that $\partial_t F:=\left\{S\in \binom{V(F)}{t}\,\middle|\, \exists e\in E(F): S\subseteq e\right\}$ 
is the set of $t$-shadows of $F$. From the tree structure, we can conclude that $\abs{\partial_t F}=b\left(\binom{r}{t}-1\right)+1.$

For each $i=2,\dots,b$, we fix a choice of $j<i$ such that $D_{i,F}:=\{e_i,e_j\}$ is a diamond. We then pick one vertex from $e_i\setminus e_j$, one vertex from $e_j\setminus e_i$, and $t-2$ vertices from $e_i\cap e_j$ to form a set $p_{i,F}$. We call $p_{i,F}$ a \emph{crossing set} of $F$. Note that, because of the tree structure, $p_{i,F}\notin \partial_t F$. Moreover, since $p_{i,F}\subseteq e_1\cup\dots\cup e_i$, we know that the crossing sets $p_{2,F},\dots,p_{b,F}$ are all distinct and the diamonds $D_{2,F},\dots,D_{b,F}$ do not contain the crossing sets associated with other diamonds.

We construct an auxiliary bipartite graph $G$ on the vertex set $\cC\sqcup \cT$, where $\cC$ is the set of $t$-components in $H$ and $\cT=\binom{V(H)}{t}$. We put an edge between $F\in\cC$ and $p\in\cT$ if $p\in \partial_t F$ or $p=p_{i,F}$ for some $i$. We call the first type a \emph{shadow edge} and the second type a \emph{crossing edge}. We claim that $G$ is acyclic.

Suppose not, then there is a cycle
$p_1,F_1,p_2,\dots,p_z,F_z,p_1$
in $G$. Note that $p_i$ can not be in both $\partial_t F_{i-1}$ and $\partial_t F_i$, otherwise the two $t$-components $F_{i-1}$ and $F_i$ would merge. For each of $p_i,p_{i+1}$, choose an edge of $F_i$ containing it
when it is a shadow set, and choose its associated diamond when it
is a crossing set. Let $F'_i$ be an inclusion-minimal $t$-connected
subgraph containing these chosen edges, and put $\lambda_i=e(F'_i)$.
Thus each of $p_i,p_{i+1}$ has a specified supporting subgraph of at
most two edges inside $F'_i$. Since $F'_1,\dots,F'_z$ come from different $t$-components, the edges are disjoint. Therefore,  $B:=\bigcup_{i=1}^z F'_i$ is a subgraph with $b:=\sum_{i=1}^z\lambda_i$ edges. Each $F'_i$ spans $d\lambda_i+t$ vertices. Adding the pieces in
cyclic order, consecutive pieces share a $t$-set, and the last
piece meets the preceding union in $p_z\cup p_1$, of size at least
$t+1$. Hence $v(B)\le db+t-1$.

\begin{claim}\label{claim:k-odd-claim}
    Every positive integer $\gamma\leq b$ is realizable in $B$.
\end{claim}
\begin{poc}
    We may assume without loss of generality that $\lambda_1$ is the maximum among $\lambda_1,\dots,\lambda_z$.
    Not all $\lambda_i$ can equal $1$, since then every endpoint
    would be a shadow set in both adjacent components. Thus
    $\lambda_1\ge2$. If $\lambda_1=2$, at least one of $p_1,p_2$
    must be in $\partial_t F_1$. This is because two distinct crossing sets have different
    supporting diamonds. Reversing the cycle if necessary, assume
    that $p_2$ is a shadow set in this case.

    If $\gamma\le\lambda_1$, take any $t$-connected $\gamma$-edge
    subgraph of $F'_1$. We may therefore assume that
    $\gamma>\lambda_1$. Let $\eta\ge2$ be minimal with
    $\lambda_1+\cdots+\lambda_\eta\ge\gamma$, and put
    $u=\gamma-(\lambda_1+\cdots+\lambda_{\eta-1})$.
    Then $1\le u\le\lambda_\eta$.
    
    If $u\ge2$, grow the chosen supporting edge or diamond for
    $p_\eta$ to a $t$-connected $u$-edge subgraph $F''_\eta$
    of $F'_\eta$. The same is possible when $u=1$ and $p_\eta$
    is a shadow set. In either case,
    $B_\gamma=F'_1\cup\cdots\cup F'_{\eta-1}\cup F''_\eta$
    has $\gamma$ edges . Consecutive pieces share a $t$-set, so
    $v(B_\gamma)\le d\gamma+t$.

    It remains to consider $u=1$ when $p_\eta$ is a crossing set.
    Its supporting diamond has two edges. We can also choose a
    $t$-connected $(\lambda_1-1)$-edge subgraph $F''_1\subseteq F'_1$
    containing $p_2$: its chosen support has at most two edges if
    $\lambda_1\ge3$, and one edge if $\lambda_1=2$.
    Thus, we may take $B_\gamma$ to be the union of $F''_1$, $F'_2\cup\dots\cup F'_{\eta-1}$, and the supporting diamond
    of $p_\eta$ in $F_\eta$ in this case.
\end{poc}

We are ready to get a contradiction by showing that $B$ contains a forbidden structure. If $b\geq k$, then by \cref{claim:k-odd-claim}, we know that there is a subgraph $B_k$ with $k$ edges and $v(B_k)\leq dk+t$, which contradicts the $(dk+t,k)$-freeness. If $b<k$, then \cref{claim:k-odd-claim} together with the fact that $v(B)\leq db+t-1$, we know that $B$ is a $(k,b)$-block, which is also impossible. Thus, the graph $G$ is acyclic.

Let $m$ be the number of $t$-components.
A component with $b$ edges has $b(\binom{r}{t}-1)+1$ shadow edges and $b-1$
crossing edges in $G$. Hence $e(G)=\binom{r}{t}e(H)$. Since $G$ is a forest,
$\binom{r}{t}e(H)\le m+\binom nt.$ On the other hand, the component shadows
are disjoint, so \[\left(\binom{r}{t}-1\right)e(H)+m=|\partial_tH|\le\binom nt.\]
Adding these inequalities gives $\left(2\binom{r}{t}-1\right)e(H)\le2\binom nt,$ as required.
\end{proof}

\subsection{The upper bound for even $k$}\label{subsec:upper-even-k}
By \cref{thm:f-to-g-cleanup}, it is sufficient to show the following.

\begin{theorem}\label{thm:BES-r-k-t-even-upper}
For all $r\geq 4$, $2\le t<r$ and even $k\geq 2$,
$g^{(r)}\bigl(n;(r-t)k+t,k\bigr)
\le
\frac{1}{\binom{r}{t}}\binom{n}{t}.$
\end{theorem}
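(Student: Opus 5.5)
Let $H$ be an $n$-vertex $r$-graph that is $(dk+t,k)$-free and contains no $(k,b)$-block for any $b<k$. The aim is $\binom{r}{t}e(H)\le\binom nt$. As in the proof of \cref{thm:BES-r-k-t-odd-upper}, every $t$-component $F$ with $b$ edges has fewer than $k$ edges, satisfies $v(F)=db+t$, admits a tree structure, and has $\abs{\partial_t F}=b\bigl(\binom rt-1\bigr)+1$. The component shadows are disjoint, so
\[
\Bigl(\tbinom rt-1\Bigr)e(H)+m=\abs{\partial_t H},
\]
where $m$ is the number of $t$-components. Since $\sum_F(b_F-1)=e(H)-m$, it is enough to find an injection from the set of diamonds of a suitable spanning family into non-shadows. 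More precisely, every component $F$ should supply $b_F-1$ diamonds (the tree diamonds $D_{i,F}$), each diamond should get \emph{two} distinct non-shadow $t$-sets, and the resulting map should be globally injective. This would give
\[
2\bigl(e(H)-m\bigr)\le\tbinom nt-\abs{\partial_tH}.
\]
Adding the two displays gives $\bigl(\binom rt+1\bigr)e(H)-m\le\binom nt$. That is still weaker than what we want by $e(H)-m$, so the count has to be set up more cleverly. The cleaner target is the following: assign to each diamond $D$ a subgraph $\Psi_D$ and two non-shadows $T_{1,D},T_{2,D}$, and form an auxiliary bipartite graph between pieces and $t$-sets, as in the odd case. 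Then show this graph is a forest, and choose the pieces so that the edge count minus the vertex count of the forest yields exactly $\binom rt e(H)\le\binom nt$.

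\textbf{Local step.} Fix a diamond $D=\{e,e'\}$ with $|e\cap e'|=t$. Its crossing $t$-sets are the sets meeting both $e\setminus e'$ and $e'\setminus e$; since $r\ge4$ there are many of them. I would use $r\ge4$ to find two crossing sets that share at most $t-1$ vertices and differ in a way that keeps their supporting configurations separate. For $(r,t)=(3,2)$ there is only one crossing pair per diamond once the shared vertices are fixed, and this is exactly where the argument is expected to break. If two such crossing sets are non-shadows, take $\Psi_D=D$. Otherwise some crossing set $p$ is covered by an edge $f\notin D$. In that case $D\cup\{f\}$ spans at most $d\cdot3+t$ vertices, with the count depending on $|f\cap(e\cup e')|$. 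The component structure forces $|f\cap(e\cup e')|\le t-1$ relative to each individual edge, but the union meets $p$. Enlarge $\Psi_D$ by $f$ and look again for crossing sets of the enlarged piece, iterating. Each enlargement creates an extra vertex saving, so the piece becomes tight. By $(dk+t,k)$-freeness and the absence of blocks, the process must stop before reaching $k$ edges. Here the evenness of $k$ enters: the pieces realize every even size, and the block condition for even $k$ uses realizability of $s$, where $s$ is congruent to $k$ modulo $b$.

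\textbf{Global step.} Suppose the auxiliary graph between pieces and their chosen non-shadows (together with shadows) had a cycle. Glue the pieces along the cycle as in the proof of \cref{claim:k-odd-claim}. This produces a subgraph $B$ with $v(B)\le d\,e(B)+t-1$ in which every $\gamma$, or at least every relevant $\gamma$, is realizable. That contradicts either $(dk+t,k)$-freeness or the absence of $(k,b)$-blocks. Counting the edges and vertices of the forest then gives the bound.

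\textbf{Main obstacle.} I expect the hardest part to be the realizability claim in the global step. In the odd case, the $u=1$ crossing-set subcase was handled by trading one edge from $F'_1$. Here, pieces of size $2$ whose two non-shadows both lie in the cycle can force parity constraints. I would first try to show that only even $\gamma$ need to be realized when $k$ is even, by choosing pieces of even size or by replacing a $k$-realization with a block argument using $s\equiv k\pmod b$. If that fails, I would fall back on a case analysis of the minimal piece.
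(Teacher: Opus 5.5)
Your outline (a piece $\Psi_D$ and two non-shadows per diamond, an auxiliary forest, a cycle-gluing contradiction using the parity of $k$) has the right shape. However, the local construction you describe cannot support the global step. You enlarge $\Psi_D$ one edge at a time, so $\Psi_D$ is a chain $e,e',f_1,\dots,f_j$. A tight subgraph of such a chain spanning either final non-shadow must contain the whole chain. Two things follow. First, chain lengths are arbitrary, so $e(B)$ need not be even. Second, the tight subgraphs of $B$ crossing from one piece to the next are essentially unions of whole consecutive pieces, so even for even $e(B)$ the remainder $s$ of $k$ modulo $e(B)$ need not be realizable. For example, two edge-disjoint $3$-edge chains sharing both non-shadows give $e(B)=6$ and $v(B)\le 6d+t-1$. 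For $k=10$ you would then need a tight $4$-edge subgraph of $B$, which need not exist, so neither $(dk+t,k)$-freeness nor block-freeness is contradicted.

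The missing idea is to extend by \emph{two} edges per round, containing two different current candidates. One ($e_{i+1}$) drives the process and the other ($f_{i+1}$) is a spare. Then $|\Psi_D|=2\ell_D-2$ is even, and every size in $[\ell_D,2\ell_D-2]$ is realized by a tight subgraph containing the whole spine $e_1,\dots,e_{\ell_D}$, hence spanning both $T_{1,D}$ and $T_{2,D}$. This slack is exactly what a minimal-interval argument needs to realize every even $\gamma\le e(B)$ along a path of pieces. Also, $r\ge4$ is not used to separate crossing sets; it guarantees at least $d^2\binom t2\ge3$ candidates. Block-freeness forces any outside edge containing a candidate to meet the current union in exactly $t$ vertices, so such an edge contains only one candidate. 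Hence at least two candidates survive as non-shadows when the process stops.

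Second, gluing ``as in the odd case'' relied on the pieces $F'_i$ coming from distinct $t$-components and therefore being edge-disjoint. Here, pieces of diamonds in one component (say $\{e_1,e_2\}$ and $\{e_2,e_3\}$) share edges. So neither $v(B)\le d\,e(B)+t-1$ nor the realizability count is available until you show two things: two pieces share at most one edge, and the pieces along a shortest cycle are pairwise edge-disjoint. The paper does this in Claims~\ref{claim:two-gadget-overlap}--\ref{claim:realizable-cover} and Corollary~\ref{cor:path-gadgets-disjoint}; your proposal has no counterpart.

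Finally, the bookkeeping needs fixing. The inequality $(\binom rt+1)e(H)-m\le\binom nt$ from your injective version is \emph{stronger} than the goal (as $e(H)\ge m$), not weaker, and asks for much more than the argument needs. An auxiliary graph that also records shadow incidences has cycles immediately, because overlapping pieces share shadows. What suffices is the multigraph $\widetilde G$ on $\cT_0$ with one edge $T_{1,D}T_{2,D}$ per diamond $D$. If it is a forest then $|\cD|\le|\cT_0|$. Combined with $|\cD|\ge e(H)-m$ and $|\partial_tH|=\bigl(\binom rt-1\bigr)e(H)+m$, this gives $\binom rt e(H)\le\binom nt$.
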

\begin{proof}
    Let $H$ be a $(dk+t,k)$-free $r$-graph on $n$ vertices without $(k,b)$-blocks for any $b<k$. Similar to the proof of \cref{thm:BES-r-k-t-odd-upper}, if $F_1,\dots,F_m$ are the $t$-components of $H$, then $\abs{\partial_t H}=\abs{\bigcup_{i=1}^m\partial_t F_i}= e(H)\left(\binom{r}{t}-1\right)+m$. We again define $\cT=\binom{V(H)}{t}, \cT_1=\partial_t H, \cT_0=\cT\setminus \cT_1$. Let $\cD$ be the family of all diamonds in $H$. It was also proved in the proof of \cref{thm:BES-r-k-t-odd-upper} that $\abs{\cD}\geq e(H)-m$. Thus, it is sufficient to show that $\abs{\cD}\leq \abs{\cT_0}$. Once this is proved, \[\binom{n}{t}\geq \abs{\cT_1}+\abs{\cD}\geq e(H)\left(\binom{r}{t}-1\right)+m+e(H)-m=e(H)\binom{r}{t},\]
    and the theorem follows.

    If $\cD=\varnothing$, there is nothing to prove. Hence we may assume
    that $\cD\ne\varnothing$, and in particular, $k\ge4$.

    \paragraph{Associating two missing sets with each diamond.}
    For each diamond $D\in\cD$, we fix an ordering of its edges $D=\{e_1,e_2\}$, and consider the following process. We maintain two lists of distinct edges $e_1,\dots,e_i$ and $f_3,\dots,f_i$. For $i\geq 2$, suppose that we have already defined $e_1,\dots,e_i$, then we say that a $t$-set $T\subseteq e_1\cup\dots\cup e_{i}$ is a candidate if $T\cap (e_i\setminus (e_1\cup\dots\cup e_{i-1}))\neq \varnothing$ and $T\cap ((e_1\cup\dots\cup e_{i-1})\setminus e_i)\neq \varnothing$. If there are at least two distinct candidates contained in distinct edges
    outside $\{e_1,\ldots,e_i,f_3,\ldots,f_i\}$, choose two of them    and name the corresponding edges $e_{i+1}$ and $f_{i+1}$.
    The process terminates if there are no such two edges. Suppose the process terminates with $\ell_D$ edges in the first list, we define $I_D=\{e_1,\dots,e_{\ell_D}\}$, $U_D=\{f_3,\dots,f_{\ell_D}\}$, $\Psi_D=I_D\cup U_D$.
    \begin{claim}
        We have $\abs{\Psi_D}< k$.
    \end{claim}
    \begin{poc}
       In the order $e_1,e_2,e_3,f_3,e_4,f_4,\ldots$, every edge
        after the first meets the union of the preceding edges in
        at least $t$ vertices. If the process reached $k$ edges,
        its first $k$ edges would therefore span at most $dk+t$
        vertices, a contradiction.
    \end{poc}

    \begin{claim}\label{claim:even-tight}
        For $2\le i\le\ell_D$ and distinct indices $3\le j_1<\cdots<j_a\le\min\{i+1,\ell_D\}$, allowing $a=0$, the graph $B$ formed by the edges $e_1,\dots,e_i,f_{j_1},\dots,f_{j_a}$ satisfies $v(B)=d(i+a)+t$.
    \end{claim}
    \begin{poc}
        From the construction, we have $v(B)\leq d(i+a)+t$. If the
    inequality were strict, put $b=i+a<k$ and let
    $s\in[b]$ be the remainder of $k$ modulo $b$, with $s=b$ when
    $b\mid k$. The first $s$ edges
    in the ordering $e_1,\dots,e_i,f_{j_1},\dots,f_{j_a}$ span at most $ds+t$ vertices.
    Hence $B$ would be a $(k,b)$-block, a contradiction.
    \end{poc}

    We also record a consequence for an edge $h$ not yet chosen.
    Write $S_i=e_1\cup\cdots\cup e_i$. If $h$ contains a candidate,
    then $|h\cap S_i|=t$. Indeed, a larger intersection would make
    $\{e_1,\ldots,e_i,h\}$ a $(k,i+1)$-block: all its prefix sizes
    are realizable, and $i+1\le\ell_D+1<k$.
    In particular, such an edge contains at most one candidate.

    From the claim, we know that
        $v(e_1\cup\dots\cup e_i)=di+t.$
    Therefore, we know that both $e_i\setminus S_{i-1}$ and $ S_{i-1}\setminus e_i$ contain at least $d$ vertices, and $e_i\cap S_{i-1}$ contains $t$ vertices. Thus, there are at least $d^2\binom{t}{t-2}$ candidates since we can pick one vertex each from $e_i\setminus S_{i-1}$ and $ S_{i-1}\setminus e_i$, and pick $t-2$ vertices from $e_i\cap S_{i-1}$ to form a candidate. From the assumption of the parameters, there are at least $3$ candidates. Therefore, in the $\ell_D$-th round where the process terminates, there are at least two candidates $T_{1,D},T_{2,D}$ that are not contained in any new edges. Neither $T_{1,D}$ nor $T_{2,D}$ lies in a chosen edge: it meets the new vertices of $e_{\ell_D}$ and also $S_{\ell_D-1}\setminus e_{\ell_D}$,
    while each $f_j$ meets $S_{\ell_D}$ only in its attaching
    $t$-set in $S_{j-1}$. Thus both sets belong to $\cT_0$.

    Now, for each diamond $D\in\cD$, we fix a choice of $I_D,U_D,\Psi_D,T_{1,D},T_{2,D}$. If $\cD$ is non-empty, then we construct an auxiliary (multi-)graph $\Tilde{G}$ as follows. The vertex set of $\Tilde{G}$ is the set $\cT_0$, and we put an edge between $T_{1,D},T_{2,D}$ for every diamond $D$ in $H$. We claim that the graph $\Tilde{G}$ is indeed a simple acyclic graph. 

    By \cref{claim:even-tight}, $v(\Psi_D)=d(2\ell_D-2)+t$.
    In the order $e_1,\ldots,e_{\ell_D},f_3,\ldots,f_{\ell_D}$,
    each edge after the first adds exactly $d$ vertices.
    In particular, each $f_i$ meets $S_{\ell_D}$ exactly in its
    attaching $t$-set in $S_{i-1}$, and the sets
    $f_i\setminus S_{\ell_D}$ are pairwise disjoint.
    The attaching sets for $e_i,f_i$ are distinct and each meets
    both $e_{i-1}\setminus S_{i-2}$ and $S_{i-2}\setminus e_{i-1}$.
    Thus neither is contained in an earlier $e$-edge or an earlier
    attaching set. It follows that no two chosen edges meet in
    $t$ vertices except $e_1,e_2$: $D$ is the only diamond in $\Psi_D$.
    We record two further properties below.

    \begin{claim}\label{claim:even-local-properties}
        For every $D\in\cD$, the following statements hold.
        \begin{enumerate}
            \item Every nonempty $F\subseteq\Psi_D$ satisfies $v(F)\geq d|F|+t$.
            If $|F|\geq2$ and equality holds, then $D\subseteq F$.

            \item Every integer $1 \leq \gamma\leq 2\ell_D-2$ is realizable in $\Psi_D$. Moreover, if $\gamma\geq \ell_D$, then we can pick the realizing graph $B_\gamma$ such that $I_D\subseteq B_\gamma$, and hence $T_{1,D}\cup T_{2,D}\subseteq V(B_\gamma)$.
        \end{enumerate}
    \end{claim}

    \begin{poc}
        For the first part, order the edges in $F$ as a subsequence of 
        $e_1,\dots,e_{\ell_D},f_3,\dots,f_{\ell_D}.$
        The first selected edge contributes $r=d+t$ vertices. Every later
        selected edge has $d$ vertices which were new at the time
        that edge appeared, and hence these vertices do not belong to
        any earlier selected edge. Therefore
           $ v(F)\geq r+d(|F|-1)=d|F|+t.$
        If equality holds and $|F|\geq2$, then the first two selected
        edges must meet in exactly $t$ vertices. By the uniqueness of
        the diamond in $\Psi_D$, these two edges must form $D$.

        For the second part of the claim, we may simply take $B_\gamma$ to be the first $\gamma$ edges in the ordering
        $e_1,\dots,e_{\ell_D},f_3,\dots,f_{\ell_D}.$
        It is not hard to check that $B_\gamma$ satisfies the conditions.
    \end{poc}

    \paragraph{Controlling overlaps of the associated subgraphs.}
    We next show that two such subgraphs share at most one edge,
    and that sharing an edge makes every intermediate size realizable.

    \begin{claim}\label{claim:two-gadget-overlap}
        For any two distinct diamonds $D,D'$, set $B:=\Psi_D\cup\Psi_{D'}$ and $b=e(B)$.
        We have
            $|E(\Psi_D)\cap E(\Psi_{D'})|\leq1.$
        Moreover, if the equality holds, then every positive integer $\gamma\leq b$ is realizable in $B$.
    \end{claim}

    \begin{poc}
Write $\Psi_1=\Psi_D$, $\Psi_2=\Psi_{D'}$, and
$\ell_1=\ell_D\ge\ell_2=\ell_{D'}$, relabelling if necessary.
Suppose that $c=|E(\Psi_1)\cap E(\Psi_2)|\ge1$.
By \cref{claim:even-local-properties}, every size at most
$2\ell_1-2$ is realizable in $\Psi_1$.

Give each edge $e_i,f_i$ its index $i$, and choose a shared edge
$h$ whose index $i_1$ in $\Psi_1$ is smallest. Let $i_2$ be its
index in $\Psi_2$. For $a\in\{1,2\}$, let $B_a$ consist of $h$
and the first $i_a-1$ $e$-edges of $\Psi_a$.
Then $B_1\cap B_2=\{h\}$ by the choice of $h$, and
$v(B_a)\le di_a+t$ by the construction. Thus $B_1\cup B_2$
realizes $b_0=i_1+i_2-1\le\ell_1+\ell_2-1\le2\ell_1-1$.
Append the remaining $e$-edges of each copy in order, followed
by the remaining $f$-edges, omitting repetitions. Each added edge
meets the preceding union in at least $t$ vertices, so every size
from $b_0$ to $b$ is realizable. Together with the sizes realized
in $\Psi_1$, this covers all of $[b]$. In particular, $b<k$.

If $c\ge2$, \cref{claim:even-local-properties} gives
$v(\Psi_1\cap\Psi_2)\ge dc+t+1$: equality with $dc+t$ would
force the intersection to contain both distinct diamonds, whereas
each $\Psi_a$ contains only its initial diamond. Consequently,
$    v(B)\le v(\Psi_1)+v(\Psi_2)-v(\Psi_1\cap\Psi_2)
    \le db+t-1.$
Since every size in $[b]$ is realizable, $B$ would be a
$(k,b)$-block, a contradiction. Hence $c=1$, as required.
\end{poc}

\begin{claim}\label{claim:disjoint-path-realizable}
        Suppose that
       $T_1,D_1,T_2,D_2,\dots,T_z,D_z,T_{z+1}$
        is a walk in $\Tilde{G}$ such that $D_1,\dots,D_z$ are all
        distinct. Assume that $\Psi_{D_1},\dots,\Psi_{D_z}$ are mutually
        edge-disjoint and set $b=\sum_{i=1}^ze(\Psi_{D_i})$. Then every positive even $\gamma\leq b$ is realizable in $\bigcup_{i=1}^z \Psi_{D_i}$.
    \end{claim}

    \begin{poc}
        Put $\ell_i=\ell_{D_i}$.
        If $\gamma\le 2\ell_i-2$ for some $i$, use
        \cref{claim:even-local-properties}. Otherwise, since 
        $\sum_{i=1}^z(2\ell_i-2)=b\geq\gamma$,
        we may choose an inclusion-minimal interval
        $J\subseteq[z]$ such that
        $\sum_{i\in J}(2\ell_i-2)\geq\gamma$.
        Since both sides are even, we may write $\sum_{i\in J}(2\ell_i-2)=\gamma+2x$
        for some nonnegative integer $x$. In the case
        $\gamma>2\ell_i-2$ for every $i$, the interval $J$ contains
        at least two indices. 
        By the minimality of $J$, deleting either endpoint of $J$
        makes the sum smaller than $\gamma$. Hence, the two
        endpoint terms $2\ell_i-2$ of $J$ are both at least $2x+2$, while every other
        term is at least $2$. Thus, 
        \[
            \gamma+2x
            \geq 2(2x+2)+2(|J|-2)
            =4x+2|J|,
        \]
        which implies that                      $  \sum_{i\in J}\ell_i
            =\frac{\gamma}{2}+x+|J|\le\gamma.$
        Therefore, we may choose integers $\ell_i\le\gamma_i\le 2\ell_i-2$ for $i\in J$
        with $\sum_{i\in J}\gamma_i=\gamma$.
        By \cref{claim:even-local-properties}, there are subgraphs
        $B_i\subseteq\Psi_{D_i}$ with $\gamma_i$ edges containing
        $I_{D_i}$ and satisfying $v(B_i)\le d\gamma_i+t$.
        Their edge sets are disjoint, and consecutive subgraphs
        contain the common $t$-set $T_{i+1}$. Therefore their union
        has $\gamma$ edges and at most
        $\sum_{i\in J}(d\gamma_i+t)-(|J|-1)t=d\gamma+t$ vertices.
    \end{poc}

    Now, we want to show that the assumption that $E(\Psi_{D_1}),\dots,E(\Psi_{D_z})$ are mutually disjoint in \cref{claim:disjoint-path-realizable} always holds. The next two claims handle the only possible overlap on a shortest counterexample that exactly one edge is shared by the first and last subgraphs.
    \begin{claim}\label{claim:even-realizable}
        Suppose that
            $T_1,D_1,T_2,D_2,\dots,T_z,D_z,T_{z+1}$
        is a walk in $\Tilde{G}$ such that $D_1,\dots,D_z$ are all
        distinct. Assume that $E(\Psi_{D_1}),\dots,E(\Psi_{D_z})$ are mutually disjoint, except that $\abs{E(\Psi_{D_1})\cap E(\Psi_{D_z})}=1$. Set $\ell_i=\ell_{D_i}$ and $b=\abs{\bigcup_{i=1}^z E(\Psi_{D_i})}=\sum_{i=1}^z (2\ell_i-2)-1$. Then every $\gamma\in [b]$ such that at least one of the following holds is realizable in $\bigcup_{i=1}^z\Psi_{D_i}$.
        \begin{enumerate}
            \item $\gamma\leq 2\ell_i-2$ for some $i\in [z]$,
            \item $\sum_{i\in J}\ell_i\leq \gamma\leq \sum_{i\in J}(2\ell_i-2)$ for some non-empty interval $J\subseteq [z]$,
            \item $\gamma\leq 2\ell_1+2\ell_z-5$, or
            \item $\gamma=\sum_{i\notin J}(2\ell_i-2)-1$ for some (possibly empty) interval $J\subseteq [2,z-1]$.
        \end{enumerate}
    \end{claim}
    \begin{poc}
        Let $E(\Psi_{D_1})\cap E(\Psi_{D_z})=\{h\}$. We prove the four cases separately. 

        For the first case where $\gamma\leq2\ell_i-2$ for some $i\in[z]$. By the second part of
        \cref{claim:even-local-properties}, $\gamma$ is realizable
        in $\Psi_{D_i}$, and hence it is realizable in $\bigcup_{i=1}^z\Psi_{D_i}$.

        Next, suppose that there is a nonempty interval
        $J\subseteq[z]$ such that
        $            \sum_{i\in J}\ell_i
            \leq\gamma
            \leq
            \sum_{i\in J}(2\ell_i-2).$
        We may choose integers $\gamma_i$, for $i\in J$,
        such that $\ell_i\leq\gamma_i\leq2\ell_i-2$ and $\sum_{i\in J}\gamma_i=\gamma$.
        By the second part of
        \cref{claim:even-local-properties}, for every $i\in J$ there
        is a $\gamma_i$-edge subgraph $B_i\subseteq\Psi_{D_i}$ such that $I_{D_i}\subseteq B_i$ and $v(B_i)\leq d\gamma_i+t$.
        In particular, $B_i$ contains both $T_i$ and $T_{i+1}$.

        Suppose first that $J\neq[z]$. Then the graphs
        $\Psi_{D_i}$, for $i\in J$, are mutually edge-disjoint.
        Therefore, we may set $B_\gamma$ to be $\bigcup_{i\in J}B_i$, which has exactly $\gamma$ edges. Adding these graphs in their
        order along the interval $J$, we obtain $v(B_\gamma)
            \leq
            \sum_{i\in J}(d\gamma_i+t)-(|J|-1)t=d\gamma+t.$

        It remains to consider $J=[z]$. If $h$ is not contained in
        both $B_1$ and $B_z$, the same argument applies. Suppose that
        $h\in B_1\cap B_z$. The graph $\bigcup_{i=1}^z B_i$
        then has $\gamma-1$ distinct edges. 
        Since $T_z\notin \partial_t(H)$ and $h\in E(H)$, we know that $T_z$ is not contained in $h$. Therefore, we have $\abs{h\cup T_z}\geq r+1.$
        Thus, 
        \[
            v\left(\bigcup_{i=1}^z B_i\right)
            \le\sum_{i=1}^z(d\gamma_i+t)-(z-2)t-(r+1)
            =d(\gamma-1)+t-1.
        \]
        Moreover, all $I_{D_i}$ are included in $\bigcup_{i=1}^z B_i$, so we may add a new edge from some $U_{D_i}$ to it to form $B_\gamma$. In this way, the graph $B_\gamma$ has $\gamma$ edges and $v(B_{\gamma})\leq v(\bigcup_{i=1}^z B_i)+d\leq d\gamma+t-1.$ 

        Next, suppose that $\gamma\leq 2\ell_1+2\ell_z-5$. Since
        $e(\Psi_{D_1}\cup\Psi_{D_z})
=(2\ell_1-2)+(2\ell_z-2)-1=2\ell_1+2\ell_z-5,$
        we may apply \cref{claim:two-gadget-overlap} to
        $\Psi_{D_1}\cup\Psi_{D_z}$. This shows that $\gamma$ is
        realizable.

        Finally, suppose that
        $\gamma
            =
            \sum_{i\notin J}(2\ell_i-2)-1$
        for some possibly empty interval
        $J\subseteq[2,z-1]$. If $J$ is empty, then $\gamma=b$. In this case, we take $B_\gamma$ to be $\bigcup_{i=1}^z\Psi_{D_i}$. Similar to the calculation in the second case, we have
        \begin{equation}\label{eq:even-union-size}
            v(\bigcup_{i=1}^z\Psi_{D_i})\leq db+t-1,
        \end{equation}
        which is stronger than the bound $db+t$ we need.

        Suppose that $J$ is nonempty, and write $J=[p,q]$.
        Let $B_\gamma:=
            \bigcup_{i<p}\Psi_{D_i}
            \ \cup\
            \bigcup_{i>q}\Psi_{D_i}.$
        The only repeated edge in this union is $h$, so
       $e(B_\gamma)
            =
            \sum_{i\notin J}(2\ell_i-2)-1
            =\gamma.$
        We know that 
    $v(\bigcup_{i<p}\Psi_{D_i})\leq d\sum_{i<p}(2\ell_i-2)+t$ and $v(\bigcup_{i>q}\Psi_{D_i})\leq d\sum_{i>q}(2\ell_i-2)+t.$
        Since both of them contain the edge $h$, we have
        $v(B_\gamma)\leq \left(d\sum_{i<p}(2\ell_i-2)+t\right)+\left(d\sum_{i>q}(2\ell_i-2)+t\right)-r=d\gamma+t.$
    \end{poc}
    Indeed, the four conditions in \cref{claim:even-realizable} cover all $\gamma\leq b$. We will show this in the following claim.
    \begin{claim}\label{claim:realizable-cover}
        For integers $\ell_1,\ldots,\ell_z\ge2$, every positive
        $\gamma\le\sum_{i=1}^z(2\ell_i-2)-1$ satisfies at least
        one of the four numerical conditions in
        \cref{claim:even-realizable}.
    \end{claim}
    \begin{poc}
        Write $a_i=2\ell_i-2$. We may assume that
        $\gamma>\max_i a_i$ and $\gamma\ge a_1+a_z$, since otherwise
        the first or third condition holds.
        Call an interval $J$ minimal if its sum is at least $\gamma$
        but neither endpoint can be removed. Write
        $\sum_{i\in J}a_i=\gamma+x$. Such an interval has at least
        two terms, its endpoints are at least $x+1$, and its other
        terms are at least $2$. Therefore
        \begin{equation}\label{eq:realizable-cover-interval}
            \gamma+x\ge2x+2|J|-2.
        \end{equation}
        If the inequality is strict, both sides are even, so
        $\gamma+x\ge2x+2|J|$. It follows that
        $\sum_{i\in J}\ell_i=(\gamma+x)/2+|J|\le\gamma$,
        giving the second condition.

        We may therefore assume equality for every minimal interval.
        Its terms then have the form $x+1,2,\ldots,2,x+1$;
        in particular, $x$ and $\gamma$ are odd. Choose a minimal
        interval $[u,v]$ with $v$ maximal. Removing its first term
        leaves sum $\gamma-1$. If $v<z$, append $a_{v+1}$ and then
        remove as many leftmost terms as possible while keeping
        sum at least $\gamma$. This gives another minimal interval
        ending at $v+1$, a contradiction. Hence $v=z$.

        We now have $a_{u+1}=\cdots=a_{z-1}=2$ and
        $\sum_{i=u+1}^z a_i=\gamma-1$.
        The even integer $\gamma+1-a_1$ is strictly greater than
        $a_z$ and at most $\gamma-1$. Thus some suffix $[y,z]$,
        with $u<y<z$, has sum $\gamma+1-a_1$.
        Taking $J_0=[2,y-1]$, possibly empty, gives
        $\sum_{i\notin J_0}a_i-1=a_1+\sum_{i=y}^z a_i-1=\gamma$,
        which is the fourth condition.
    \end{poc}
    \begin{cor}\label{cor:path-gadgets-disjoint}
        Suppose that
            $T_1,D_1,T_2,D_2,\dots,T_z,D_z,T_{z+1}$
        is a walk in $\Tilde{G}$ such that $D_1,\dots,D_z$ are all
        distinct. Then $E(\Psi_{D_1}),\dots,E(\Psi_{D_z})$ are mutually disjoint.
    \end{cor}
    \begin{proof}
        Suppose the edge sets are not mutually disjoint, then we may assume without loss of generality that only $E(\Psi_{D_1})$ and $E(\Psi_{D_z})$ intersect by shortening the walk. From \cref{claim:two-gadget-overlap}, we know that they intersect in exactly one edge. Thus, we may apply \cref{claim:even-realizable} and \cref{claim:realizable-cover} to conclude that every positive integer $\gamma\leq b=\abs{\bigcup_{i=1}^z E(\Psi_{D_i})}$ is realizable in $\bigcup_{i=1}^z E(\Psi_{D_i})$. If $b\geq k$, then $k$ will be realizable, which contradicts the $(dk+t,k)$-freeness of $H$. Otherwise, from \cref{eq:even-union-size} in the proof of \cref{claim:even-realizable}, we have
        $v(\bigcup_{i=1}^z\Psi_{D_i})\leq db+t-1.$
        Thus, $\bigcup_{i=1}^z\Psi_{D_i}$ forms a $(k,b)$-block since the remainder $s$ of $k$ modulo $b$ is realizable. Again, this yields a contradiction.
    \end{proof}

    \paragraph{Completing the forest argument.}
    The overlap analysis now lets us count along an auxiliary cycle
    without counting any hyperedge twice. The evenness of $k$
    supplies the required remainder size.

    \begin{claim}\label{claim:even-auxiliary-forest}
        The graph $\Tilde{G}$ is a simple acyclic graph.
    \end{claim}

    \begin{poc}

        Suppose that $\Tilde{G}$ contains a cycle or a pair of parallel
        edges, then we may find a walk with the same starting and ending vertex
            $T_1,D_1,T_2,D_2,\dots,T_z,D_z,T_1$
        such that $D_1,\dots,D_z$ are all distinct. 
        \cref{cor:path-gadgets-disjoint} shows that $\Psi_{D_1},\dots,\Psi_{D_z}$ are mutually edge-disjoint.

        Put $B=\bigcup_{i=1}^z\Psi_{D_i}$ and
        $b=e(B)=\sum_{i=1}^ze(\Psi_{D_i})$, which is even.
        Adding the subgraphs in cyclic order, consecutive pieces
        share a $t$-set, and the last meets the preceding union in
        the two distinct sets $T_z,T_1$. Thus $v(B)\le db+t-1$.

        By \cref{claim:disjoint-path-realizable}, every even
        $\gamma\leq b$ is realized by a $\gamma$-edge subgraph of
        $B$ spanning at most $d\gamma+t$ vertices.
        If $b\geq k$, take $\gamma=k$, contradicting the
        $(dk+t,k)$-freeness of $H$. Suppose instead that $b<k$, and let
        $s\in[b]$ be the remainder of $k$ modulo $b$. Since both
        $k$ and $b$ are even, $s$ is even. Hence, $s$ is realizable in $B$. Therefore, $B$ is a
        $(k,b)$-block, a contradiction.
    \end{poc}

    Since $\Tilde{G}$ is a forest,
    $|\cD|=e(\Tilde{G})\le v(\Tilde{G})=|\cT_0|$.
    Together with the shadow count at the start of the proof,
    this gives $e(H)\binom rt\le\binom nt$, as required.
\end{proof}

\section{The lower bound}\label{sec:lowerbound}
Throughout this section, whenever integers $r>t$ are fixed, we write
$d=r-t$ and define
$\xi^{(r)}(n;dk+t,k)$ to be the maximum number of edges in an
$n$-vertex $r$-graph that is $(dk+t,k)$-free and
$(d\ell+t-1,\ell)$-free for every integer
$2\le\ell\le k-1$.
Clearly,
$\xi^{(r)}(n;dk+t,k)\le f^{(r)}(n;dk+t,k)$.
The high-girth packing theorem of Delcourt and
Postle~\cite[Theorem~1.3]{delcourt2022finding} gives, for all fixed
$r>t\ge2$ and $k\ge2$, a partial Steiner system with at least
$(1-o(1))\binom nt/\binom rt$ edges and no $(di+t,i)$-configuration
for any $2\le i\le k$. In particular,
\begin{equation}\label{eq:xi-packing-lower}
    \xi^{(r)}(n;dk+t,k)
    \ge\left(\frac{1}{t!\binom rt}-o(1)\right)n^t.
\end{equation}
This proves the lower bound for even $k$ in
\Cref{thm:BES-r-k-t-odd}; the stronger bound for $\xi^{(r)}$ will
also be used in \Cref{sec:ramsey}.

We shall focus on the lower bound for odd $k$ in
\Cref{thm:BES-r-k-t-odd} and the strict improvement asserted in
\Cref{thm:BES-3-2-even}.
Our proofs use the conflict-free hypergraph matching method developed
independently by Delcourt and Postle
\cite{delcourt2022finding}
and by Glock, Joos, Kim, K\"uhn, and Lichev
\cite{glock_conflict_2024}.
More precisely, we use a consequence of this method established in
\cite[Theorem~3.1]{glock_64_2024}.
Before stating it in the form needed here, we introduce the following
notions.

\begin{definition}\label{def:P-configuration}
Fix integers $r>t\ge2$, and let $H$ be an $r$-uniform hypergraph.
A nonempty subgraph $F$ is \emph{tight} if $v(F)\le d|F|+t$.
For $p\in\binom{V(H)}{t}$, let $\mu_p(H;r,t)$ be the minimum
number of edges in a tight subgraph whose vertex set contains $p$:
\[
    \mu_p(H;r,t)
    :=
    \min\Bigl\{
        |H_0|:
        H_0\subseteq H,\ 
        p\subseteq V(H_0),\
        v(H_0)\le d|H_0|+t
    \Bigr\}.
\]
If no such subhypergraph exists, we set $\mu_p(H;r,t)=\infty$.
When $r$ and $t$ are clear from the context, we simply write
$\mu_p(H)$.
\end{definition}

\begin{definition}[$(r,t)$-seed]\label{def:seed}
Let $V$ be a finite set.
A triple $\mathcal{T}=(V,E,J)$ is called an \emph{$(r,t)$-seed} if
\[
    \emptyset\neq E\subseteq\binom{V}{r},
    \qquad
    V(E)=V,
    \qquad
    J\subseteq\binom{V}{t},
    \qquad
    \partial_t(E)\subseteq J.
\]
Here $\partial_t(E)=\bigcup_{e\in E}\binom{e}{t}$ is the $t$-shadow
of $E$.
The \emph{packing ratio} of $\mathcal{T}$ is
$\gamma(\mathcal{T})=\frac{|E|}{|J|}$.
For every $p\in\binom{V}{t}$, define
$\mu_p(\mathcal{T})=\mu_p(E;r,t)$.
Let $R = \binom{V}{t}\setminus J$. 
Define the girth of $\mathcal{T}$ to be $\min\{\mu_p(\mathcal{T}): p\in R\}$.
If $R=\varnothing$, we define the girth of $\mathcal T$ to be
$\infty$.
\end{definition}

\begin{definition}[$(\alpha,\beta)$-friendly]\label{def:friendly}
Let $\alpha,\beta\ge1$ be integers.
An $(r,t)$-seed $\mathcal{T}=(V,E,J)$ is called
\emph{$(\alpha,\beta)$-friendly} if the following conditions hold.
\begin{enumerate}
    \item
    For every integer $2\le \ell\le\alpha$, the $r$-graph $E$
    is $(d\ell+t-1,\ell)$-free.

    \item
    For every odd integer $3\le \ell\le\alpha$, the $r$-graph $E$
    is $(d\ell+t,\ell)$-free.

    \item
    For every $p\in\binom{V}{t}\setminus J$, we have
    $\mu_p(\mathcal T)>\beta$.
\end{enumerate}
\end{definition}

The following lemma records a slightly strengthened consequence of
\cite[Theorem~3.1]{glock_64_2024} in our notation.\footnote{The
strengthening is implicit in the proof of
\cite[Theorem~3.1]{glock_64_2024}. In Step~2 of that proof, the
conflict family $\mathcal C_1$ excludes every
$(\ell(r-t)+t,\ell)$-configuration using edges from at least two
packed copies, for every $2\le\ell\le k$. Step~6 produces a
$\mathcal C$-free matching. Hence an
$(\ell(r-t)+t-1,\ell)$-configuration in the resulting hypergraph
would either lie in one copy, contradicting the hypothesis on the
seed, or use at least two copies, contradicting the
$\mathcal C$-free construction.}

\begin{lemma}\label{lemma:glock-packing}
Let \(r>t\ge2\), \(k\ge2\), and \(d=r-t\).
Let \(\mathcal T=(V,E,J)\) be an \((r,t)\)-seed such that
the \(r\)-graph \((V,E)\) is \((dk+t,k)\)-free and
$(d\ell+t-1,\ell)$-free
for every $2\le \ell\le k-1$. Suppose further that the girth of $\mathcal{T}$ is greater
than \(\frac{k}{2}\). Then
$\liminf_{n\to\infty}
    \frac{f^{(r)}(n;dk+t,k)}{n^t}
    \ge
    \liminf_{n\to\infty}
    \frac{\xi^{(r)}(n;dk+t,k)}{n^t}
    \ge
    \frac{|E|}{t!\,|J|}
    =
    \frac{\gamma(\mathcal T)}{t!}.$
\end{lemma}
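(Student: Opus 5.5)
The plan is to deduce \cref{lemma:glock-packing} directly from \cite[Theorem~3.1]{glock_64_2024}, checking the hypotheses there and then the strengthened freeness conclusion. The first inequality is immediate from $\xi^{(r)}\le f^{(r)}$, so everything rests on the second. I will first match our notion of an $(r,t)$-seed with the seed notion in \cite{glock_64_2024}. There, a seed is a finite $r$-graph $E$ together with a family $J\supseteq\partial_t(E)$ of reserved $t$-sets, and the hypotheses are the following three.
\begin{enumerate}
    \item The seed $E$ itself is $(dk+t,k)$-free.
    \item Every $t$-set $p\notin J$ spanned by $V$ cannot be completed to a tight configuration of fewer than roughly $k/2$ edges of $E$. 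This is exactly our girth condition $\mu_p(\mathcal T)>k/2$ for $p\in R$.
    \item The packing is realised by a conflict-free matching in the auxiliary hypergraph whose vertices are the $t$-sets of $K_n^{(t)}$ and whose edges are copies of $J$ placed on $n$ vertices.
\end{enumerate}
The matching theorem of \cite{delcourt2022finding,glock_conflict_2024} gives an almost perfect conflict-free matching. This yields $(1-o(1))\binom nt/|J|$ edge-disjoint copies of $J$, hence $(1-o(1))\binom nt|E|/|J|=(\gamma(\mathcal T)/t!-o(1))n^t$ edges of $H$.

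The second step is to verify that the resulting $H$ is $(dk+t,k)$-free and $(d\ell+t-1,\ell)$-free for every $2\le\ell\le k-1$. I would split any offending configuration $F\subseteq H$ according to how many packed copies it meets.

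If $F$ lies inside a single copy, it is a subgraph of $E$. This is excluded directly by the hypotheses on $(V,E)$, since $(d\ell+t-1,\ell)$-freeness for $\ell\le k-1$ and $(dk+t,k)$-freeness are both assumed.

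If $F$ meets at least two copies, I follow Step~2 of the proof of \cite[Theorem~3.1]{glock_64_2024}. The conflict family $\mathcal C_1$ there consists of all sets of at most $k$ copies whose union contains an $(\ell d+t,\ell)$-configuration using edges from at least two copies, for $2\le\ell\le k$. Since every $(d\ell+t-1,\ell)$-configuration is in particular an $(d\ell+t,\ell)$-configuration, both kinds of configuration are covered by $\mathcal C_1$. The matching is $\mathcal C$-free (Step~6), so no such $F$ exists.

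The main obstacle is confirming that the conflict family $\mathcal C_1$ really covers all $\ell<k$ and not only $\ell=k$, and that it still satisfies the degree and codegree bounds the matching theorem requires. The girth assumption $>k/2$ is what controls conflicts in which two copies share reserved-but-unused $t$-sets in $R$. I would rely on the bounded-size counting in \cite{glock_64_2024}: each conflict has at most $k$ copies, and a union of $j$ copies meeting a tight configuration loses at least one degree of freedom per extra copy. This keeps the conflict degrees at $O(n^{t(j-1)-1})$ relative to the $n^{t(j-1)}$ threshold, uniformly in $\ell$. Granting this, the footnoted strengthening follows, and taking $\liminf$ gives the stated bound.
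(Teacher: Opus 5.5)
Your proposal is correct and follows the paper's route. The first inequality is just $\xi^{(r)}\le f^{(r)}$, and the second is \cite[Theorem~3.1]{glock_64_2024} plus the footnoted observation: a $(d\ell+t-1,\ell)$-configuration in the packing either lies in one copy, which the hypothesis on $(V,E)$ excludes, or meets at least two copies, which the conflict family $\mathcal C_1$ excludes because it already forbids every multi-copy $(d\ell+t,\ell)$-configuration for $2\le\ell\le k$. One caveat on your closing heuristic, which the deduction does not use: copies glued along single $t$-sets occur at the threshold rate, so such tight unions are ruled out not by losing a degree of freedom per copy but by the girth hypothesis, which forces every piece containing a glue set outside $J$ to have more than $k/2$ edges.
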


We can directly derive the following lemma from Lemma~\ref{lemma:glock-packing}.

\begin{lemma}
\label{lemma:friendly-seed-packing}
Let $r>t\ge2$, let $k\ge3$ be odd, and put $d=r-t$.
If $\mathcal{T}=(V,E,J)$ is a $(k,k)$-friendly $(r,t)$-seed, then
    $\liminf_{n\to\infty}
    \frac{f^{(r)}(n;dk+t,k)}{n^t}
    \ge
    \liminf_{n\to\infty}
    \frac{\xi^{(r)}(n;dk+t,k)}{n^t}
    \ge
    \frac{\gamma(\mathcal{T})}{t!}.$
\end{lemma}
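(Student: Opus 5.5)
We derive this from \cref{lemma:glock-packing} by checking its three hypotheses for a $(k,k)$-friendly seed $\mathcal T=(V,E,J)$ with $k\ge3$ odd. Two of them follow directly from the friendliness conditions with $\alpha=k$. The remaining one, the girth bound, follows from the third condition with $\beta=k$.

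\emph{Step 1: $(d\ell+t-1,\ell)$-freeness for $2\le\ell\le k-1$.} This is exactly condition~1 of \cref{def:friendly} with $\alpha=k$, since $k-1\le\alpha$.

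\emph{Step 2: $(dk+t,k)$-freeness.} Since $k$ is odd and $3\le k\le\alpha=k$, condition~2 of \cref{def:friendly} applied with $\ell=k$ says precisely that $E$ is $(dk+t,k)$-free. This is the only place where the parity of $k$ is used.

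\emph{Step 3: girth.} Let $R=\binom{V}{t}\setminus J$. If $R=\varnothing$, the girth is $\infty>k/2$ by convention. Otherwise, condition~3 with $\beta=k$ gives $\mu_p(\mathcal T)>k\ge k/2$ for every $p\in R$. Taking the minimum over the finite set $R$ shows that the girth is greater than $k/2$.

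With all three hypotheses verified, \cref{lemma:glock-packing} yields the displayed chain of inequalities, including the middle term $\xi^{(r)}$ and the equality $|E|/(t!\,|J|)=\gamma(\mathcal T)/t!$. No step poses a genuine obstacle. The only thing to watch is that the friendliness parameters are strong enough, and here $\beta=k$ is in fact much more than the required girth $>k/2$.
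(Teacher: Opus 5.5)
Your proposal is correct and is exactly the direct derivation the paper has in mind when it says the lemma follows from \cref{lemma:glock-packing}. Conditions 1 and 2 of \cref{def:friendly} with $\alpha=k$ give the two freeness hypotheses; condition 2 is applied at $\ell=k$, which is where the oddness of $k$ is used. Condition 3 with $\beta=k$ gives girth $>k>k/2$.
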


Our probabilistic construction uses the first-moment alteration
method; see~\cite{janson_random_2000}.

\subsection{The lower bound in
\Cref{thm:BES-r-k-t-odd} for odd $k$}\label{subsec:lower-general}

We prove the following strengthened form of the
lower bound in \Cref{thm:BES-r-k-t-odd} for odd $k$.

\begin{theorem}\label{thm:BES-r-k-t-odd-lower}
For all $r>t\ge2$ and odd $k\ge3$, we have 
$\frac{\xi^{(r)}\bigl(n;dk+t,k\bigr)}{n^t}
    \ge
        \frac{2}{t!\bigl(2\binom{r}{t}-1\bigr)}
        -o(1).$
\end{theorem}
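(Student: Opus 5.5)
By \Cref{lemma:friendly-seed-packing}, it suffices to show the following. For every $\varepsilon>0$ there is a $(k,k)$-friendly $(r,t)$-seed $\mathcal T$ with
\[
\gamma(\mathcal T)\ge \frac{2}{2\binom rt-1}-\varepsilon .
\]
The plan is to reach such a seed by iteration. We start from a seed with ratio close to the target and weak friendliness. We then raise the friendliness parameters one step at a time, losing only a tiny amount of ratio at each step. The iterative step is the lemma referred to in the introduction as \Cref{lemma:seed-increment}, which I would state and prove first.

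\textbf{Seed-increment lemma.} If $\mathcal T$ is $(\alpha,\beta)$-friendly with $\alpha,\beta\le k$ and $\delta>0$, then there is an $(\alpha',\beta')$-friendly seed $\mathcal T'$ with $\gamma(\mathcal T')\ge\gamma(\mathcal T)-\delta$, where $(\alpha',\beta')$ increases $\alpha$ or $\beta$ by one.

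\textbf{Starting seed.} The natural base case is the block from the baby example, generalized to $t$-sets. Fix a $t$-set's worth of ``extra'' structure: take $m$ diamonds that all share the same reserved crossing family. The cleanest version is a star of $m$ diamonds $\{A_i\cup X, A_i\cup Y\}$ with the $A_i$ pairwise disjoint. Let $J$ be the shadow together with the $t$-sets crossing between $X$ and $Y$. Then $|J|\approx m(2\binom rt-1)+O(1)$ and $|E|=2m$, so $\gamma\to 2/(2\binom rt-1)$.

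Before using this block I would check two things. First, any sub-collection with $\ell\ge2$ edges that is tight is a union of whole diamonds, so it has even size; this gives condition~2 for all odd $\ell$. Second, there is no $(d\ell+t-1,\ell)$-configuration and every $p\notin J$ has $\mu_p$ at least $2$ or $3$. This handles small $\alpha,\beta$. Alternatively, one can simply begin from a single diamond with all $\binom{V}{t}$ reserved, as the introduction suggests. Then the ratio improvement must come from the increment step, which makes many copies share their reserved non-shadow $t$-sets.

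\textbf{Increment step.} Take $N$ disjoint-vertex blow-up copies of $\mathcal T$ placed randomly inside a vertex set $W$. Choose the placement so that the reserved non-shadow sets $R$ of different copies coincide heavily: identify each copy's $R$-sets with $t$-sets in a small pool, so that each pool $t$-set is reserved by many copies. As in the baby example, the cost of $R$ is then amortized, and the new ratio is close to $|E|/|\partial_t E|$ for the diamond-rich structure.

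Then apply first-moment deletion. Count expected copies of ``bad'' configurations: an $\ell$-edge set on at most $d\ell+t-1$ vertices, an odd tight set, or a tight set containing a non-reserved $t$-set with at most $\beta'$ edges, in each case using edges from at least two copies. Delete one copy from each such configuration. The new $J'$ is the union of the $J$'s; sets not in $J'$ that lie in the pool are handled by the girth condition.

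The parity argument is the heart of this step. In a configuration spread over several copies, each copy meets the others only through shared reserved $t$-sets, which are non-shadows inside that copy. By the old friendliness, the portion inside each copy must have $\mu_p>\beta$. So either that portion is large (many edges per copy, which makes the configuration too big to matter up to level $\alpha'$), or it is tight and forced to be a union of whole diamonds, hence even. A union of even parts cannot be an odd tight configuration, and the strict $(d\ell+t-1,\ell)$ configurations cost an extra vertex, which makes their expected count $o(N)$.

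\textbf{Main obstacle.} I expect the hardest step to be precisely this case analysis. One must show that, for the chosen sharing density, every potentially bad multi-copy configuration either has expected count $o(N)$ or forces an even contribution from each copy. Doing this requires the density parameters to be tuned so that configurations with ``excess'' vertex deficit are rare. Once the increment lemma holds, iterate it $O(k)$ times from the base seed with $\delta=\varepsilon/(2k)$ to obtain a $(k,k)$-friendly seed. Because $k$ is odd, the girth condition $\mu_p>k\ge k/2$ and the odd freeness match the hypotheses of \Cref{lemma:friendly-seed-packing}, which yields the theorem.
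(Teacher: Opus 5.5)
Your architecture is the same as the paper's: reduce to a $(k,k)$-friendly seed via \Cref{lemma:friendly-seed-packing}, raise the girth by blow-up with correlated random sharing and first-moment deletion, and use parity. As written, though, it neither starts at the target ratio nor contains the mechanism that makes the deletion step work.

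\textbf{The starting seed.} The star block works only in special cases such as $(r,t)=(3,2)$.
\begin{itemize}
\item Whenever $t<2d$, two whole diamonds $\{A_i\cup X,A_i\cup Y\}$, $i=1,2$, span $2t+2d\le 4d+t-1$ vertices. This violates condition~1. For $d>t$ even the two edges $A_1\cup X,A_2\cup X$ already violate it.
\item For $t\ge3$, the non-shadow $t$-sets of $V(D_i)$ that use vertices of $A_i$ must either be reserved per diamond, which spoils $|J|\approx m(2\binom{r}{t}-1)+O(1)$, or be left with $\mu_p=2$, which makes the block no better than a single diamond.
\end{itemize}
Your alternative, a diamond with all of $\binom{V}{t}$ in $J$, is trivially friendly but has ratio $2/\binom{2r-t}{t}<2/(2\binom{r}{t}-1)$. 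Your increment lemma only promises $\gamma(\mathcal T')\ge\gamma(\mathcal T)-\delta$, so nothing recovers the target. The right start is a diamond with $J=\partial_tE$. It has exactly the target ratio and is $(\alpha,1)$-friendly for every $\alpha$, so fix $\alpha=k$ from the outset. Raising $\alpha$ by blow-up is impossible anyway, since every copy reproduces the old seed's configurations and deleting copies cannot remove them. Non-shadow sets are never put in $J$. In the step from odd $\beta$ to $\beta+2$, only the dangerous sets, those $p\notin J$ with $\mu_p\le\beta+2$ (hence $\mu_p=\beta+1$, even), are reserved, and only through their shared images in $J^+$.

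\textbf{The deletion step.} Your dichotomy is false: copies in a random blow-up do not meet only through shared reserved sets, and not every non-parity conflict is rare. For example, two copies sharing the image of a shadow $t$-set form a cross-copy diamond, a tight $2$-edge set that must be destroyed. In expectation there are $\Theta(c^2N^t)$ of them among about $cN^t$ copies, so they are not $o(N^t)$.

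The missing mechanism is the paper's two-scale choice:
\begin{itemize}
\item For each dangerous $p$, take a random $t$-partite $G_p$ of density $\delta$.
\item Select a transversal map with probability $c\delta^{-b}N^{t-s}$, and only if it sends every dangerous $p$ into $G_p$, where $c=\Lambda\delta$.
\end{itemize}
Then the reserved images cost $\mathbb E Z=b\delta N^t=(b/\Lambda)\,cN^t$. A $q$-tuple with vertex overlap $h(\Phi)$ and dangerous-set overlap $D(\Phi)$ occurs $O(c^q\delta^{-D(\Phi)}N^{tq-h(\Phi)})$ times in expectation. The cases then go as follows.
\begin{itemize}
\item Collisions, Berge cycles in the junction hypergraph, and supercritical unions have $h\ge tq-t+1$, so they are $O(N^{t-1})$.
\item Removing junction cycles forces $D\le q-1$.
\item If $h=tq-t$ and $D=q-1$, every piece must contain a dangerous set. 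It is then tight with at least $\beta+1\ge2$ edges, hence even by condition~2. This, not a ``union of whole diamonds'' structure, is where parity enters.
\item The remaining exact conflicts have $D\le q-2$ and total $O_{\mathcal T,\alpha,\Lambda}(\delta^2N^t)$. This is negligible against $cN^t=\Lambda\delta N^t$ only after $\delta$ is chosen small in terms of $\Lambda$.
\end{itemize}
Without this quantitative scheme the alteration argument does not close.
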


The main ingredient is the following proposition, which provides
friendly seeds with packing ratio arbitrarily close to the desired
value.

\begin{prop}\label{prop:exist-friendly}
For all fixed integers $r>t\ge2$ and $\alpha,\beta\ge1$, and every
$\eta>0$, there exists an $(\alpha,\beta)$-friendly $(r,t)$-seed
$\mathcal{T}$ satisfying
$\gamma(\mathcal{T})
    \ge
    \frac{2}{2\binom{r}{t}-1}-\eta$.
\end{prop}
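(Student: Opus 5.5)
We construct the seed by an iterative procedure, following the outline in the introduction. We start from a single diamond $\mathcal T_0=(V_0,E_0,J_0)$, where $E_0=\{e_1,e_2\}$ with $|e_1\cap e_2|=t$ and $J_0=\partial_t(E_0)$. Then $|J_0|=2\binom rt-1$ and $\gamma(\mathcal T_0)=2/(2\binom rt-1)$. This seed is $(\alpha,1)$-friendly in the trivial sense. Its only defect is that $\mu_p(\mathcal T_0)=2$ for every crossing $t$-set $p\in\binom{V_0}{t}\setminus J_0$, since the diamond itself is tight and spans $p$. The plan is to prove an increment lemma, in the spirit of \Cref{lemma:seed-increment}: \emph{if $\mathcal T$ is $(\alpha,\beta)$-friendly, then for every $\varepsilon>0$ there is an $(\alpha,\beta+1)$-friendly seed $\mathcal T'$ with $\gamma(\mathcal T')\ge\gamma(\mathcal T)-\varepsilon$.} We apply this lemma $\beta$ times with $\varepsilon=\eta/\beta$. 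Conditions 1 and 2 of friendliness must be preserved at every step, and the loss in ratio accumulates to at most $\eta$.

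The increment step is a random blow-up. Take a large ground set $W$ of size $N$ and place copies of $\mathcal T$ on it, so that every bad $t$-set of a copy, meaning a non-$J$ set $p$ with $\mu_p=\beta+1$, is forced to be shared by many copies. This is the baby example scaled up. For each bad $t$-set we fix its image in advance and choose many copies through it, with all other vertices fresh and random, with density $\rho$ chosen so that the expected number of copies is $\Theta(N^t)$ while typical pairwise overlaps of copies are small. We then let $E'$ be the union of the copies' edges. The set $J'$ consists of the union of the copies' $J$-sets together with the reserved shared bad sets. Because each reserved set is shared by $M\to\infty$ copies, its contribution to $|J'|$ is negligible per copy, so $\gamma\approx\gamma(\mathcal T)$ provided that the $J$-sets of distinct copies rarely collide. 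We make this rare by the choice of $\rho$ and alteration: we delete every copy whose $J$-set meets another copy's $J$-set in an unintended $t$-set. After reserving, every $p\notin J'$ that was bad inside one copy is now in $J'$. Other non-$J'$ sets either lie inside a single copy with $\mu>\beta+1$, or any tight subgraph spanning them uses at least two copies.

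The heart of the proof, and the step I expect to be the main obstacle, is controlling tight configurations that use several copies. There are three dangers: $(d\ell+t-1,\ell)$-configurations for $\ell\le\alpha$, odd $(d\ell+t,\ell)$-configurations, and small tight subgraphs spanning a non-$J'$ set with at most $\beta+1$ edges. I would classify each such configuration by its pattern of copies and intersection points. If the configuration is generic, meaning its copies overlap only through designated shared reserved sets, then each copy contributes a tight piece through the shared set. Friendliness of $\mathcal T$ forbids odd tight pieces, so the pieces are even, and a tree-like gluing along $t$-sets gives a configuration with $v\ge d\ell+t$ and $\ell$ even. This parity argument, identical in spirit to \cref{claim:disjoint-path-realizable}, excludes conditions 1 and 2. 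It also excludes short tight subgraphs spanning new non-$J'$ sets: such a subgraph would need a spanning set that is crossing between copies, and this costs at least two pieces, each of size greater than $\beta$ or equal to at least $2$, which I would need to push to exceed $\beta+1$. Non-generic configurations have an additional vertex coincidence. A first-moment count shows their expected number is $O(N^{t-c})$ for some $c>0$, hence $o$ of the number of copies, and we delete one copy from each. The delicate part is enumerating the finitely many overlap patterns, which is possible because configurations have bounded size $\max(\alpha,\beta+1)$, and checking that every generic pattern satisfies the even-piece property. I would first verify this for patterns whose copies form a cycle through reserved sets, since this is the case that creates a deficit of one vertex, exactly as in the upper-bound cycle argument.

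Finally, after the deletions the surviving copies still number $(1-o(1))$ of the original. The reserved sets remain $o(|E'|)$, so $\gamma(\mathcal T')\ge\gamma(\mathcal T)-\varepsilon$ for $N$ large, and $\mathcal T'$ is $(\alpha,\beta+1)$-friendly. Iterating from $\mathcal T_0$ proves \Cref{prop:exist-friendly}.
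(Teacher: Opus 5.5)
Your overall architecture is the paper's own (\Cref{lemma:seed-increment}): start from one diamond, do a random blow-up in which copies share their dangerous $t$-sets, delete conflicts, and use parity of the pieces for the frequent gluing patterns. Your $\beta\to\beta+1$ steps are harmless, since for even $\beta$ with $\beta+1\le\alpha$ condition 2 rules out dangerous sets. Two load-bearing steps fail as stated, however.

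\textbf{The construction.} A copy may have $b\ge2$ dangerous sets; this happens already for the diamond when $r\ge4$, e.g.\ $b=4$ for $(r,t)=(4,2)$. If you fix the image of one of them and take ``all other vertices fresh'', the other $b-1$ images are unshared and each costs its own reserved $t$-set, so the ratio drops to about $e/(j+b-1)$. All dangerous sets of every copy must land in shared families simultaneously. The paper achieves this as follows:
\begin{itemize}
\item for each $p\in B$ it fixes an independent random $t$-partite family $G_p$ of density $\delta$;
\item it selects each transversal map with probability $\rho=c\delta^{-b}N^{t-s}$, keeping it only if $\phi(p)\in G_p$ for all $p\in B$.
\end{itemize}
This gives about $cN^t=\Lambda\delta N^t$ copies but only about $b\delta N^t$ reserved sets. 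The sharing multiplicity is therefore a large constant $\Lambda$.

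\textbf{The conflict accounting (the more serious gap).} Your dichotomy ``generic, or an extra vertex coincidence and hence $O(N^{t-c})$'' is false. A $q$-tuple of copies can have vertex overlap exactly $t(q-1)$ while fewer than $q-1$ of its gluings are shared reserved sets; in the paper's notation, $h(\Phi)=tq-t$ but $D(\Phi)\le q-2$.

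Here is an example with $(r,t)=(3,2)$, $\beta=1$, and seed $\{a_0b_0x_0,a_0b_0y_0\}$.
\begin{itemize}
\item Let copies $\phi_1,\phi_3$ share the reserved pair $\phi(x_0y_0)$.
\item Let $\phi_2(a_0)=\phi_1(a_0)$ and $\phi_2(b_0)=\phi_3(b_0)$.
\item The four edges of $E_{\phi_1}\cup E_{\phi_3}$, together with $\phi_2(a_0b_0x_0)$, form $5$ edges on $7$ vertices, an odd tight configuration.
\end{itemize}
Copies $1,2$ and $2,3$ meet only in single vertices, so your pairwise $J$-collision rule does not catch this. The expected number of such triples is of order $N^t$ (it is $\Lambda^3\delta^2N^t$), not $o(N^t)$.

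In general these configurations are rare only relative to the $\Lambda\delta N^t$ copies, by a factor $\Lambda^{q-1}\delta$. This is why the paper fixes $\Lambda$ first, paying a reserved-set loss of $b/\Lambda$, and only then takes $\delta$ small. Consequently the surviving fraction is $1-O(\Lambda^{\alpha-1}\delta)$, not $1-o(1)$. It is also why the single frequent pattern, $h=tq-t$ with $D=q-1$, must be shown harmless by parity (\Cref{fact:critical-exact-overlap}).

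Relatedly, cycles of copies through reserved sets fit your definition of ``generic'', but parity does not help there. Such a cycle gives $v(H)\le d|H|+t-1$ outright, so it must be deleted as rare because $h\ge tq-t+1$.

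Finally, keep $\alpha\ge\beta+2$ throughout; the paper uses $\alpha^\star=\max\{\alpha,\beta^\star\}$. The bound $v(H_i)\ge d|H_i|+t$ on the pieces comes from friendliness of the old seed and is unavailable for pieces with more than $\alpha$ edges. Your condition-3 analysis at size $\beta+1>\alpha$ would need exactly those pieces.
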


\Cref{thm:BES-r-k-t-odd-lower} follows immediately from \Cref{prop:exist-friendly} (applied with $(\alpha,\beta)=(k,k)$) and \Cref{lemma:friendly-seed-packing}. It remains to prove Proposition~\ref{prop:exist-friendly}.
The proposition follows from the following key lemma.

\begin{lemma}\label{lemma:seed-increment}
Let $r>t\ge2$ and $\alpha,\beta\ge1$ be fixed integers such that
$\beta$ is odd and $\alpha\ge\beta+2$. For every $\eta>0$ and every
$(\alpha,\beta)$-friendly $(r,t)$-seed
$\mathcal{T}=(V,E,J)$, there exists an $(\alpha,\beta+2)$-friendly $(r,t)$-seed
$\mathcal{T}^{+}=(V^{+},E^{+},J^{+})$ such that
$\gamma(\mathcal{T}^{+})
    \ge
    \gamma(\mathcal{T})-\eta.$
\end{lemma}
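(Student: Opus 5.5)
We build $\mathcal T^+$ by a random ``sharing'' construction on a large blow-up of $\mathcal T$, followed by first-moment alteration. Write $R=\binom{V}{t}\setminus J$ for the non-reserved $t$-sets of $\mathcal T$. By $(\alpha,\beta)$-friendliness, every $p\in R$ has $\mu_p(\mathcal T)>\beta$, and we must raise this to $>\beta+2$ while losing at most $\eta$ in packing ratio.

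\emph{Construction.} Fix a large integer $N$ and a vertex set $W$ of size $N$. Take $M$ copies $\phi_1,\dots,\phi_M$ of $\mathcal T$, embedded by independent uniformly random injections $V\to W$, with $M$ chosen so that the expected number of copies containing a given $t$-set of $W$ is a small constant $\varepsilon$. Let $E^+$ be the union of the images of $E$. Let $J^+$ be the union of the images of $J$, together with the images of those $p\in R$ with $\mu_p(\mathcal T)\in\{\beta+1,\beta+2\}$; call these the \emph{critical} sets. The extra reserved sets are therefore added only once per $t$-set of $W$. Two critical images coinciding costs nothing, and two copies sharing a $t$-set of $J$-type is a conflict we delete.

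\emph{Alteration.} Delete every copy that participates in a bad configuration of bounded size ($\le\alpha$ edges in total). The bad configurations are:
\begin{enumerate}
\item two copies sharing a $J$-image or an edge;
\item a union of $\ell\le\alpha$ edges from at least two copies that is $(d\ell+t-1,\ell)$-dense, or is $(d\ell+t,\ell)$-dense with $\ell$ odd;
\item a tight subgraph with at most $\beta+2$ edges, spanning at least two copies, that contains a $t$-set outside $J^+$.
\end{enumerate}
The standard first-moment count shows that each such configuration, having more ``overlap'' than a tree-like gluing along single $t$-sets, occurs with expected number $O(N^{t-1})$, whereas the number of copies is $\Theta(N^t)$. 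Deleting them therefore loses an $o(1)$ fraction of $E^+$. Meanwhile $|J^+|\le M|J|+(\text{expected critical images})$, and $|E^+|\approx M|E|$.

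The ratio loss needs care. Each copy has boundedly many critical sets, and with $M\approx\varepsilon N^t$ their images are spread over $\binom Nt$ sets, so we must ensure they are shared. The plan is to choose the random embeddings so that the critical sets of distinct copies are forced to coincide: first fix a random family of $t$-sets of $W$ to play the role of critical images, then embed copies conditioned on their critical sets landing in this family, with each family member used by many copies. This mirrors the baby example where $xy$ is shared by many diamonds. The extra reserved count is then a $\delta$-fraction of $M|J|$, and choosing the sharing multiplicity large gives loss $<\eta$.

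\emph{Verification.} We then check that $\mathcal T^+$ is $(\alpha,\beta+2)$-friendly.
\begin{enumerate}
\item Conditions 1--2 inside a single copy are inherited from $\mathcal T$. Across copies, parity is what makes the argument work. Copies glued along a shared critical set $p$ give a union that is tight but not strictly tight. Its edge count is a sum over copies of sizes of tight subgraphs whose vertex sets contain $p$, and the inductive hypothesis ($\beta$ odd, $\mu_p>\beta$, and odd-size configurations forbidden up to $\alpha$) should force every minimal such piece to have even size. Any surviving dense configuration is then even, so condition 2 for odd $\ell$ holds. Configurations not of this type were deleted in step 2.
\item For condition 3, take $p\notin J^+$. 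Either $p$ lies in no copy's reserved or critical images, in which case any small tight subgraph containing $p$ spans at least two copies and was deleted; or $p$ is the image of some $q\in R$ with $\mu_q(\mathcal T)>\beta+2$, which is again fine.
\end{enumerate}

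The main obstacle is the parity claim in the first verification step: showing that in every cross-copy tight configuration produced by sharing critical sets, each copy contributes an even number of edges. The approach I would try first is to induct on $\beta$, maintaining as an extra invariant that minimal tight subgraphs containing a non-$J$ set have even size, and to prove that invariant alongside friendliness.
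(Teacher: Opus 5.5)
\textbf{There is a genuine gap, at exactly the step you flag.} Your overall plan matches the paper's strategy: a blow-up of $\mathcal T$, a sparse random family into which the critical images are forced so that each is shared by many copies, first-moment deletion, and parity. But the central step is left open, and the counting you give around it is wrong.

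\textbf{Why the counting fails.} Write $\varepsilon$ for the density of copies through a $t$-set, $\Lambda$ for the sharing multiplicity, and $\delta=\varepsilon/\Lambda$ for the density of the family. Take a $q$-tuple of copies with vertex overlap $h=q|V|-|\bigcup_i\phi_i(V)|$, and let $D$ be the number of critical images counted over the $q$ copies minus the number of distinct ones. Such tuples occur about $\varepsilon^q\delta^{-D}N^{tq-h}$ times. Every tight union of pieces from $q$ copies has $h\ge t(q-1)$. Those with $h=t(q-1)$ are \emph{not} $O(N^{t-1})$:
\begin{itemize}
\item If $D\le q-2$, they form an $O(\varepsilon\Lambda^{q-2})$-fraction of the copies. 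For example, an edge of one copy meeting a diamond of another copy in exactly $t$ vertices gives a tight $3$-edge configuration, and such pairs occur $\Theta(\varepsilon^2N^t)$ times. So you need the parameter order: $\Lambda$ first, then $\varepsilon$ small, then $N$ large.
\item If $D=q-1$ (copies glued in a tree along shared critical sets), they are $\Lambda^{q-1}$ times more numerous than the copies themselves, so they cannot be deleted at all.
\end{itemize}
The lemma therefore stands or falls with showing that no bad configuration has $h=t(q-1)$ and $D=q-1$. Your proposal does not establish this.

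\textbf{What is actually missing.} The extra induction you propose targets the wrong point. Parity is already available: condition 2 of $(\alpha,\beta)$-friendliness says every tight subgraph of a single copy with between $2$ and $\alpha$ edges has even size. What is missing is an equality analysis, as follows.
\begin{itemize}
\item Let $H$ be tight with $|H|\le\alpha$, and put $H_i=H\cap E_{\phi_i}$. Then $h\ge\sum_i v(H_i)-v(H)\ge t(q-1)$.
\item If $h=t(q-1)$, equality forces every $H_i$ to be tight, and every vertex shared by several copies to lie in all the corresponding sets $V(H_i)$.
\item If also $D=q-1$ and the coincidence hypergraph has no Berge cycle, its incidence graph is a spanning tree. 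So each copy $i$ has a shared critical image $\phi_i(p_i)\subseteq V(H_i)$.
\item Hence $|H_i|\ge\mu_{p_i}(\mathcal T)\ge\beta+1\ge 2$, so $|H_i|$ is even. Then $|H|$ is even and at least $2\beta+2>\beta+2$, so $H$ is neither odd-tight nor small-tight.
\end{itemize}

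\textbf{Further fixes needed.}
\begin{itemize}
\item Delete tuples whose shared critical images form a Berge cycle. These have $h\ge t(q-1)+1$, so they are rare, and the bound $D\le q-1$ behind the dichotomy needs acyclicity.
\item Replace your third deletion rule by deleting every cross-copy tight subgraph with at most $\beta+2$ edges. Defining it via $J^+$ is circular, and the rarity of these subgraphs follows from the argument above.
\item Incidentally, $\mu_p=\beta+2$ cannot occur for $p\in R$, by the same parity condition, so the critical sets are exactly those with $\mu_p=\beta+1$.
\end{itemize}
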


\begin{proof}[Proof of Proposition~\ref{prop:exist-friendly} assuming \cref{lemma:seed-increment}]
Let $\beta^{\star}$ be the smallest odd integer satisfying
$\beta^{\star}\ge\beta$, and put
$\alpha^{\star}
    =
    \max\{\alpha,\beta^{\star}\}.$
Observe that every
$(\alpha^{\star},\beta^{\star})$-friendly seed is also
$(\alpha,\beta)$-friendly.

Let $E_1$ consist of a single diamond, put $V_1=V(E_1)$, and
let $J_1=\partial_tE_1$. Then
$\mathcal T_1=(V_1,E_1,J_1)$ is $(\alpha^\star,1)$-friendly and
has packing ratio $2/(2\binom rt-1)$.

If $\beta^{\star}=1$, there is nothing to prove.
Otherwise, apply Lemma~\ref{lemma:seed-increment} successively along the chain    $1\longrightarrow3\longrightarrow5
    \longrightarrow\cdots\longrightarrow\beta^{\star},$
always with the fixed parameter $\alpha^{\star}$.
Choose the losses in the finitely many applications so that their sum is
less than $\eta$.
This produces an
$(\alpha^{\star},\beta^{\star})$-friendly seed $\mathcal{T}$ satisfying
$\gamma(\mathcal{T})
    \ge
    \frac{2}{2\binom rt-1}-\eta.$
Since $\alpha^{\star}\ge\alpha$ and
$\beta^{\star}\ge\beta$, the seed $\mathcal{T}$ is also
$(\alpha,\beta)$-friendly.
\end{proof}
We now prove \Cref{lemma:seed-increment} using a probabilistic argument.
We select correlated random copies in a balanced blow-up of the
old seed, so that many copies use the same dangerous $t$-sets.
We then delete copies involved in four types of unwanted configurations,
called conflicts. Sharing the dangerous sets keeps the additional
support small; the alteration preserves the required sparsity.

Fix $\eta>0$.
Throughout the argument, let
$\mathcal T=(V,E,J)$ be the $(\alpha,\beta)$-friendly $(r,t)$-seed
in the statement of \Cref{lemma:seed-increment}.
Recall that $\beta$ is odd and $\alpha\ge\beta+2$.
Put $d=r-t$, $s=|V|$, $e=|E|$, and $j=|J|$, and let
$R=\binom{V}{t}\setminus J$.
Define the family of \emph{dangerous $t$-sets} by
$B=\{p\in R:\mu_p(\mathcal{T})\le\beta+2\}$ and write
$b=|B|$.
If $b=0$, then $\mu_p(\mathcal{T})>\beta+2$ for every
$p\in\binom{V}{t}\setminus J$.
Hence $\mathcal{T}$ itself is $(\alpha,\beta+2)$-friendly, and the
conclusion of Lemma~\ref{lemma:seed-increment} follows by taking
$\mathcal{T}^{+}=\mathcal{T}$.
We may therefore assume that $b>0$.

\begin{fact}\label{fact:dangerous-covered-number}
For every $p\in B$, we have $\mu_p(\mathcal{T})=\beta+1$.
\end{fact}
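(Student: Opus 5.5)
We must show that for $p\in B$, $\mu_p(\mathcal T)=\beta+1$. Since $p\in B$ we have $\mu_p(\mathcal T)\le\beta+2$, and friendliness (condition 3) gives $\mu_p(\mathcal T)>\beta$. So $\mu_p\in\{\beta+1,\beta+2\}$, and the task is to exclude $\mu_p=\beta+2$. Note that $\beta+2$ is odd, since $\beta$ is odd, and $\beta+2\le\alpha$.

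Suppose $\mu_p=\beta+2=:\ell$. Let $H_0\subseteq E$ be a tight subgraph with $\ell$ edges, $p\subseteq V(H_0)$ and $v(H_0)\le d\ell+t$. Because $\ell$ is odd with $3\le\ell\le\alpha$, condition 2 of friendliness says $E$ is $(d\ell+t,\ell)$-free. But $H_0$ consists of $\ell$ distinct edges spanning at most $d\ell+t$ vertices, which is a contradiction. Hence $\mu_p=\beta+1$.

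The bound $\ell\ge3$ needs a check: $\beta\ge1$ gives $\ell\ge3$. The only potential subtlety is whether ``tight'' allows fewer than $\ell$ edges, but $\mu_p$ is defined as the edge count of a minimizing tight subgraph, so $H_0$ has exactly $\ell$ edges. No genuine obstacle is expected; the one step to watch is confirming that $\beta+2\le\alpha$, which is exactly the hypothesis $\alpha\ge\beta+2$ of \cref{lemma:seed-increment}.
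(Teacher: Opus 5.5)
Your proposal is correct and follows essentially the same route as the paper: condition (3) of friendliness together with $p\in B$ forces $\beta<\mu_p(\mathcal T)\le\beta+2$, and the value $\beta+2$ is excluded because it would give a tight subgraph with an odd number $3\le\beta+2\le\alpha$ of edges, contradicting condition (2). Your checks that $\beta+2\ge 3$ and $\beta+2\le\alpha$ are exactly the points the argument needs.
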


\begin{proof}
The definitions give $\beta<\mu_p(\mathcal T)\le\beta+2$.
The value $\beta+2$ is impossible: it would give a tight subgraph
with an odd number $\beta+2\le\alpha$ of edges, contrary to
\Cref{def:friendly}(2).
\end{proof}

Choose a constant $\Lambda>0$ sufficiently large such that
$\frac{e}{j+4b/\Lambda}\ge\frac{e}{j}-\eta.$
We will next choose a sufficiently small constant $\delta>0$, put
$c=\Lambda\delta$, and finally choose a sufficiently large integer $N$.
The precise choices of $\delta$ and $N$ will be made later.

For every $x\in V$, let $U_x$ be a set of $N$ vertices, where the sets
$\{U_x\}_{x\in V}$ are pairwise disjoint, and put
$\widetilde{V}=\bigcup_{x\in V}U_x$.
A \emph{transversal map} is a map
$\phi:V\to\widetilde{V}$ satisfying $\phi(x)\in U_x$ for every $x\in V$.
Since the vertex classes are pairwise disjoint, every transversal map is
injective.
For every set $A\subseteq V$, we write
$\phi(A)=\{\phi(x):x\in A\}$.
For every dangerous $t$-set
$p=\{x_1,\ldots,x_t\}\in B$, independently choose an auxiliary random
$t$-partite $t$-uniform hypergraph $G_p$ with vertex classes
$U_{x_1},\ldots,U_{x_t}$ by including each transversal $t$-set
independently with probability $\delta$.
Independently of the hypergraphs $G_p$, choose mutually independent
Bernoulli random variables $Z_\phi$, one for each transversal map,
with parameter $\rho=c\delta^{-b}N^{t-s}$.
Since $s\ge r>t$, we have $\rho\le1$ for all sufficiently large $N$.

We call a transversal map $\phi$ \emph{selected} if $Z_\phi=1$ and
$\phi(p)\in E(G_p)$ for every $p\in B$.
Let $\mathcal{A}$ be the family of selected transversal maps, and put
$X=|\mathcal{A}|$.
For every fixed transversal map $\phi$, the events
$\phi(p)\in E(G_p)$, $p\in B$, are mutually independent and are also
independent of $Z_\phi$.
Consequently,
\begin{align}\label{eq:probability_of_selected}
    \mathbb{P}(\phi\in\mathcal{A})
    =
    \rho\delta^b
    =
    cN^{t-s}.
\end{align}
Since there are exactly $N^s$ transversal maps, it follows from
\eqref{eq:probability_of_selected} and linearity of expectation that
\begin{align}\label{eq:expectation_of_selected}
    \mathbb{E}X
    =
    N^s\cdot cN^{t-s}
    =
    cN^t.
\end{align}

For every transversal map $\phi$, define
$E_\phi=\phi(E)=\{\phi(A):A\in E\}$.
We call $E_\phi$ the copy of $E$ induced by $\phi$.

We now define the configurations that will be removed from
$\mathcal{A}$.
The following multi-hypergraph records coincidences among the images of
dangerous $t$-sets.

\begin{definition}\label{def:junction-hypergraph}
Let $\Phi=(\phi_1,\ldots,\phi_q)$ be an ordered tuple of distinct transversal maps.
For every $p\in B$ and $z\in\binom{\widetilde{V}}{t}$, define
$Q_{\Phi}(p,z)
    =
    \{i\in[q]:\phi_i(p)=z\}$.
Whenever $|Q_{\Phi}(p,z)|\ge2$, we call the labelled set
$Q_{\Phi}(p,z)$ a \emph{junction} of $\Phi$.

The \emph{junction multi-hypergraph} $\mathcal{J}(\Phi)$ is the
multi-hypergraph on vertex set $[q]$ whose hyperedges are all the junctions
of $\Phi$.
Junctions with different labels $(p,z)$ are regarded as distinct
hyperedges, even if they have the same vertex set.
\end{definition}

\begin{definition}[Berge cycle]\label{def:Berge-cycle}
A \emph{Berge cycle} of length $\ell\ge2$ in a hypergraph
$\mathcal H$ is an alternating sequence
$    (v_1,F_1,v_2,F_2,\ldots,v_\ell,F_\ell)$
of distinct vertices $v_1,\ldots,v_\ell$ and distinct hyperedges
$F_1,\ldots,F_\ell$ such that
$\{v_i,v_{i+1}\}\subseteq F_i$ for every $i\in[\ell]$, with indices
taken cyclically. In a multi-hypergraph, different copies of the same
hyperedge are regarded as distinct.

The \emph{incidence graph} of $\mathcal H$ is the bipartite graph
between $V(\mathcal H)$ and $E(\mathcal H)$ defined by containment.
We call $\mathcal H$ \emph{Berge-acyclic} if it contains no
Berge cycle, equivalently, if its incidence graph is a forest.
\end{definition}

\begin{definition}\label{def:bad-conflicts}
A \emph{bad conflict} is one of the following objects.
\begin{enumerate}
    \item
    A \emph{collision conflict} is an ordered pair
    $(\phi_1,\phi_2)$ of distinct selected transversal maps satisfying
        $|\phi_1(V)\cap\phi_2(V)|\ge t+1.$
    An ordered tuple $\Phi=(\phi_1,\ldots,\phi_q)$ of distinct selected
    transversal maps is called \emph{collision-free} if
    $|\phi_i(V)\cap\phi_{i'}(V)|\le t$ for all distinct $i,i'\in[q]$.

    \item
    A \emph{junction-cycle conflict} consists of an ordered tuple
    $\Phi=(\phi_1,\ldots,\phi_q)$ of distinct selected transversal maps,
    where $2\le q\le\alpha$, together with a Berge cycle of length $q$ in
    $\mathcal{J}(\Phi)$.

    \item
    A \emph{supercritical conflict} consists of a collision-free ordered
    tuple $\Phi=(\phi_1,\ldots,\phi_q)$ of distinct selected transversal
    maps, where $q\ge2$, together with nonempty subhypergraphs
    $H_i\subseteq E_{\phi_i}$ for every $i\in[q]$, such that, writing
    $H=\bigcup_{i=1}^q H_i$, we have
    $|H|\le\alpha$ and $v(H)\le d|H|+t-1$.

    \item
    An \emph{exact conflict} consists of a collision-free ordered tuple
    $\Phi=(\phi_1,\ldots,\phi_q)$ of distinct selected transversal maps,
    where $q\ge2$ and $\mathcal{J}(\Phi)$ is Berge-acyclic, together with
    nonempty subhypergraphs $H_i\subseteq E_{\phi_i}$ for every $i\in[q]$,
    such that the following holds. Let $H=\bigcup_{i=1}^q H_i$. We have
    $|H|\le\alpha$, $v(H)=d|H|+t$,
    and at least one of the following conditions holds:
    \begin{enumerate}[label=(\alph*)]
        \item\label{item:small-exact-conflict}
        $|H|\le\beta+2$;

        \item\label{item:odd-exact-conflict}
        $|H|$ is odd.
    \end{enumerate}
\end{enumerate}
\end{definition}

Let $Y$ denote the total number of recorded bad conflicts in
$\mathcal A$, and put $Z=\sum_{p\in B}|E(G_p)|$.
We only need one realization in which the total auxiliary support
is small relative to the number of surviving copies.
The following lemma supplies this; its proof is postponed until
after the proof of \Cref{lemma:seed-increment}.

\begin{lemma}[Random alteration]\label{lemma:good-realization}
After choosing $\delta>0$ sufficiently small and then $N$
sufficiently large, there exists a realization such that
\begin{equation}\label{eq:good-realization}
    X-Y\ge\frac12cN^t,
    \qquad
    Z\le\frac{4b}{\Lambda}(X-Y).
\end{equation}
\end{lemma}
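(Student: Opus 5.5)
The plan is a first-moment alteration: bound $\mathbb E Y$ and $\mathbb E Z$, then combine them through Markov and a concentration (or second-moment) estimate for $X$.

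\emph{Step 1: the support $Z$.} Each $G_p$ has $N^t$ potential edges, each present with probability $\delta$. Hence $\mathbb E Z=b\delta N^t$, and since $Z$ is a sum of independent indicators, Chernoff gives $Z\le 2b\delta N^t$ with probability $1-o(1)$. Because $c=\Lambda\delta$, this equals $\frac{2b}{\Lambda}cN^t$. So once we know $X-Y\ge\frac12 cN^t$, the second inequality in \eqref{eq:good-realization} follows.

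\emph{Step 2: concentration of $X$.} By \eqref{eq:expectation_of_selected}, $\mathbb E X=cN^t$. I would bound $\operatorname{Var}X$ by summing over pairs $\phi\neq\phi'$ whose selection events are dependent. Such a pair must share the image $\phi(p)=\phi'(p)$ of at least one dangerous $t$-set. Given that, the conditional probability gains a factor of at most $\delta^{-b}$, while the number of such pairs is at most $bN^{2s-t}$. This gives $\operatorname{Var}X\le \mathbb E X+O_\delta(N^{2s-t}\cdot c^2N^{2t-2s}\delta^{-b})=O_\delta(N^{t})$, which is $o((\mathbb E X)^2)$. Chebyshev then yields $X\ge \frac34 cN^t$ with probability $1-o(1)$.

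\emph{Step 3: the expected number of conflicts, $\mathbb E Y\le \frac1{8}cN^t$ for $\delta$ small.} This is the main obstacle. For each conflict type, enumerate the isomorphism types of the configuration $(\Phi,\text{overlap pattern})$; there are finitely many because $q\le\alpha$, or $|H|\le\alpha$ bounds the relevant number of copies after restricting to the copies that carry $H$. For a fixed type:
\begin{itemize}
\item the number of placements is $N^{\#\text{free vertices}}$;
\item the selection probability is $\rho^q$ times $\delta$ raised to the number of distinct junction $t$-sets used.
\end{itemize}

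A tree-like gluing of $q$ copies along $m$ shared dangerous sets contributes about $N^{qs-mt}\cdot c^qN^{q(t-s)}\delta^{-bq}\delta^{bq-(\text{shared count})}$. The choice $\rho\delta^b=cN^{t-s}$ is exactly what makes a Berge-acyclic junction structure cost the factor $N^{t}c^q\delta^{\,m'}$ with $m'\ge1$ junctions. Equivalently, each junction saves $N^{-t}$ in placements and gains only $\delta^{-1}$.

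\begin{itemize}
\item \emph{Collision conflicts} lose an extra factor of $N^{-1}$ relative to this count.
\item \emph{Junction-cycle conflicts} have one more junction than a tree on $q$ vertices allows, which costs an extra $N^{-t}\delta^{-1}$; hence they contribute $o(N^t)$.
\item \emph{Supercritical and exact conflicts} reduce, via Berge-acyclicity and collision-freeness, to a counting lemma. Gluing copies along at most $t$ vertices without junctions forces $v(H)$ large. Specifically, any two copies meeting in a $t$-set that is the image of a non-dangerous $p$ would, by friendliness (girth $>\beta$) and $\mu_p=\beta+1$ (\Cref{fact:dangerous-covered-number}), create an $N^{-1}$ or $\delta$ saving. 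Each copy's piece $H_i$ must then contribute an even tight piece, using \Cref{def:friendly}(2) together with \Cref{fact:dangerous-covered-number}. This parity argument rules out odd $|H|$. It also shows that $|H|\le\beta+2$ forces every non-junction overlap to cost $N^{-1}$, while junction overlaps cost at least $\delta$ relative to $\mathbb E X$.
\end{itemize}

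The net result is $\mathbb E Y=O(\delta)\,cN^t+o(N^t)$, which is at most $\frac18cN^t$ once $\delta$ is small and then $N$ is large.

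\emph{Step 4: combining.} By Markov, $Y\le\frac14cN^t$ with probability at least $1/2$. Intersecting this with the events of Steps 1–2 (each of probability $1-o(1)$) gives a realization with $X-Y\ge\frac12cN^t$ and $Z\le\frac{2b}{\Lambda}cN^t\le\frac{4b}{\Lambda}(X-Y)$.
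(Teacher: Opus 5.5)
\textbf{Verdict: genuine gap in Step 3.} Your Steps 1, 2 and 4 are sound, though heavier than needed. The paper skips concentration: once $\mathbb E Y\le\frac14cN^t$, it notes $\mathbb E[X-Y-\frac{\Lambda}{4b}Z]\ge\frac12cN^t$ and fixes a realization where this quantity is at least its mean; since $Z\ge0$, both inequalities follow.

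\textbf{Where Step 3 fails.} The gap is at the point you call the main obstacle: your cost accounting for junctions is reversed. A fixed $q$-tuple is selected with probability $c^q\delta^{-D(\Phi)}N^{q(t-s)}$, and there are $O(N^{qs-h(\Phi)})$ tuples with a given overlap pattern. So tuples glued in a tree along dangerous sets, with $h(\Phi)=t(q-1)$ and $D(\Phi)=q-1$, have expected number of order $c^q\delta^{-(q-1)}N^t=\Lambda^{q-1}cN^t$. There is no $\delta$-saving here; this is by design, since each edge of $G_p$ is the image of about $\Lambda$ selected maps. Your formula $N^tc^q\delta^{m'}$ has the wrong sign in the exponent. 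The savings run the other way:
\begin{itemize}
\item a missing junction ($D(\Phi)\le q-2$, e.g.\ copies sharing a non-dangerous $t$-set) costs a factor $\delta$, giving $O(\Lambda^{q}\delta^2N^t)$;
\item excess overlap $h(\Phi)\ge t(q-1)+1$ costs a factor $N^{-1}$.
\end{itemize}
So the claim that ``junction overlaps cost at least $\delta$ relative to $\mathbb E X$'' is false. In particular it cannot dispose of exact conflicts of type (a), $|H|\le\beta+2$. The critical configurations cannot be bounded away by counting; you must show that no exact conflict lives on them.

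\textbf{The missing deterministic argument.}
\begin{enumerate}
\item Collision-freeness makes the $H_i$ edge-disjoint, and the old seed gives $v(H_i)\ge d|H_i|+t$. Hence $h(\Phi)\ge\sum_iv(H_i)-v(H)\ge t(q-1)$ for exact conflicts. The same count gives $h(\Phi)\ge tq-t+1$ for supercritical conflicts, which you also did not derive.
\item If $h(\Phi)=t(q-1)$, equality forces each $H_i$ to be tight. It also forces every vertex shared by several sets $\phi_i(V)$ to lie in each corresponding $V(H_i)$. The reason: a vertex in $a$ of the sets $\phi_i(V)$ contributes $a-1$ to $h(\Phi)$, but only $\max\{a'-1,0\}$ to $\sum_iv(H_i)-v(H)$ if it lies in just $a'$ of the $V(H_i)$.
\item If moreover $D(\Phi)=q-1$, Berge-acyclicity makes the junction incidence graph a spanning tree. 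So every copy $i$ lies in some junction $\phi_i(p_i)$, whose vertices are shared; by the previous step $\phi_i(p_i)\subseteq V(H_i)$.
\item Therefore $|H_i|\ge\mu_{p_i}(\mathcal T)=\beta+1\ge2$. Tightness, $|H_i|\le\alpha$ and friendliness condition (2) then make $|H_i|$ even.
\item Hence $|H|$ is even and at least $2\beta+2>\beta+2$, which excludes both (a) and (b).
\end{enumerate}

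You invoke the parity, but without step 2 you have no reason for $H_i$ to contain a junction set, and you never obtain the size bound that rules out (a). Only after this exclusion does the dichotomy ``$h(\Phi)\ge tq-t+1$, or $h(\Phi)=tq-t$ and $D(\Phi)\le q-2$'' hold. That dichotomy gives $\mathbb E Y=O_\delta(N^{t-1})+K\delta^2N^t$, and hence $\mathbb E Y\le\frac18cN^t$ after choosing $\delta$ small and then $N$ large.
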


\begin{proof}[Proof of \cref{lemma:seed-increment} assuming \cref{lemma:good-realization}]
The case $b=0$ was settled above, so we may assume that $b>0$.
Fix a realization satisfying the conclusion of
Lemma~\ref{lemma:good-realization}.

For every recorded bad conflict, choose one selected transversal map
participating in that conflict, and delete all maps chosen in this way.
Let $\mathcal{M}$ be the family of surviving transversal maps.
Since at most one map is chosen for each recorded bad conflict, at most
$Y$ distinct maps are deleted. Therefore,
\begin{align}\label{eq:number-surviving-maps}
    |\mathcal{M}|
    \ge X-Y
    \ge \frac12cN^t.
\end{align}
Deleting selected maps cannot create a new bad conflict.
Consequently, $\mathcal{M}$ contains no bad conflict of any of the four
types in Definition~\ref{def:bad-conflicts}.

Define
$E^{+}
    =
    \bigcup_{\phi\in\mathcal{M}}E_{\phi}$,
    $V^{+}
    =
    V(E^{+})$,
and
\[
    J^{+}
    =
    \left(
        \bigcup_{\phi\in\mathcal{M}}\phi(J)
        \,\cup\,
        \bigcup_{\phi\in\mathcal{M}}\phi(B)
    \right)
    \cap\binom{V^{+}}{t}.
\]
Since $\partial_t(E)\subseteq J$, every $t$-subset of an edge in
$E_{\phi}$ belongs to $\phi(J)$. Hence
$\partial_t(E^{+})\subseteq J^{+}$, and therefore
$\mathcal{T}^{+}=(V^{+},E^{+},J^{+})$ is an $(r,t)$-seed.

Since $\mathcal{M}$ contains no collision conflict, any two distinct maps
$\phi,\phi'\in\mathcal{M}$ satisfy
$|\phi(V)\cap\phi'(V)|\le t$.
As $r>t$, the hypergraphs $E_{\phi}$ and $E_{\phi'}$ are edge-disjoint.
It follows that
$|E^{+}|=e|\mathcal{M}|$.

Every selected map sends each $p\in B$ to an edge of $G_p$.
Hence, by \eqref{eq:good-realization} and
$|\mathcal M|\ge X-Y$, we have
$|J^+|\le j|\mathcal M|+Z\le(j+4b/\Lambda)|\mathcal M|$.
Consequently,
\begin{equation}\label{eq:ratio-new-seed}
    \gamma(\mathcal T^+)
    =
    \frac{|E^{+}|}{|J^{+}|}
    \ge\frac{e}{j+4b/\Lambda}
    \ge\gamma(\mathcal T)-\eta.
\end{equation}

It remains to verify that $\mathcal T^+$ is
$(\alpha,\beta+2)$-friendly.
By \cref{def:bad-conflicts} and the absence of bad conflicts
in $\mathcal M$, every ordered tuple
$\Phi=(\phi_1,\ldots,\phi_q)$ of distinct maps in $\mathcal M$
with $q\le\alpha$ is collision-free and has a Berge-acyclic
junction hypergraph $\mathcal J(\Phi)$.
Indeed, a Berge cycle of length $\ell\le q\le\alpha$,
together with the $\ell$ maps corresponding to its vertices,
would form a junction-cycle conflict contained in $\mathcal M$.

Take any nonempty $H\subseteq E^+$ with $|H|\le\alpha$ and
write $H=H_1\cup\cdots\cup H_q$, where
$H_i=H\cap E_{\phi_i}$ is nonempty for each $i$ and
$\phi_1,\ldots,\phi_q\in\mathcal M$ are distinct.
In particular, $q\le|H|\le\alpha$.
If $q=1$, the old seed gives $v(H)\ge d|H|+t$.
If $q\ge2$, the same inequality follows from the absence of
supercritical conflicts. Moreover, if $q\ge2$ and equality holds,
the absence of exact conflicts implies that $|H|$ is even and
$|H|>\beta+2$.
These observations prove the first condition in \Cref{def:friendly}.
They also prove the second: a tight subgraph with an odd number
of at least three edges can neither lie in one copy nor meet
several copies.

Finally, let $z\notin J^+$ be a $t$-set and suppose that a tight
subgraph $H\subseteq E^+$ with $|H|\le\beta+2$ contains $z$ in
its vertex set. The preceding paragraph forces $H$ to lie in a
single copy $E_\phi$. Pulling back through $\phi$ gives
$\mu_{\phi^{-1}(z)}(\mathcal T)\le\beta+2$, so
$\phi^{-1}(z)\in J\cup B$. This implies $z\in J^+$, a contradiction.
Thus the third condition holds as well. Together with
\eqref{eq:ratio-new-seed}, this proves the lemma.
\end{proof}

It remains to prove Lemma~\ref{lemma:good-realization}.
There are two ways a conflict can be rare: it may identify an extra
vertex, saving a factor of $N$, or it may share fewer dangerous sets,
saving a factor of $\delta$. 
The following two parameters record
these savings.

\begin{definition}[Overlap parameters]\label{def:overlap-parameters}
Fix an integer $q\ge2$, and let
$\Phi=(\phi_1,\ldots,\phi_q)$
be an ordered tuple of distinct transversal maps. Recall that
$s=|V|$. Define the total number of vertices used by $\Phi$ by
$v(\Phi)
    :=
    \left|
        \bigcup_{i=1}^q \phi_i(V)
    \right|$,
and define its \emph{vertex overlap} by
$h(\Phi):=qs-v(\Phi)$.

Define the \emph{dangerous-set overlap} of $\Phi$ by
$D(\Phi)
    :=
    \sum_{Q\in E(\mathcal J(\Phi))}
        (|Q|-1)$,
where junctions with different labels are counted separately.
Equivalently,
    $D(\Phi)
    =
    \sum_{p\in B}
    \left(
        q-
        \left|
            \{\phi_i(p):i\in[q]\}
        \right|
    \right).$
\end{definition}

\begin{fact}[Tuple estimate]\label{fact:tuple-estimate}
Fix integers $q\ge2$ and $u,w\ge0$, and suppose that
$0<\delta\le1$. The expected number of selected ordered
$q$-tuples $\Phi$ satisfying
$h(\Phi)\ge u$ and 
$D(\Phi)\le w$
is at most
$C_{q,s}\,c^q\delta^{-w}N^{tq-u}$,
where $C_{q,s}$ depends only on $q$ and $s$.
\end{fact}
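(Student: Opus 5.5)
We bound the expectation by summing, over all admissible overlap patterns, the number of ways to realize a tuple with that pattern times the probability that all its maps are selected. Fix $q$, $u$, $w$. An ordered $q$-tuple $\Phi$ determines a \emph{pattern}: the equivalence relation on $[q]\times V$ given by $(i,x)\sim(i',x')$ iff $\phi_i(x)=\phi_{i'}(x')$. Since every $\phi_i(x)\in U_x$, identifications only occur when $x=x'$. The number of patterns is bounded by a constant depending only on $q$ and $s$. This constant will be absorbed into $C_{q,s}$.

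\emph{Counting tuples.} For a fixed pattern, the images $\phi_i(V)$ are determined by choosing one vertex of $U_x$ for each class. There are $v(\Phi)=qs-h(\Phi)$ classes, so the number of tuples realizing the pattern is at most $N^{qs-h(\Phi)}$.

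\emph{Probability that all maps are selected.} The events $\{Z_{\phi_i}=1\}$ are independent, with total probability $\rho^q$. For each $p\in B$, the requirement $\phi_i(p)\in E(G_p)$ for all $i$ involves exactly $|\{\phi_i(p):i\in[q]\}|$ distinct transversal $t$-sets. These are included independently with probability $\delta$, across different $p$ as well. Hence the probability is
\[
\rho^q\delta^{\sum_{p\in B}|\{\phi_i(p)\}|}=\rho^q\delta^{qb-D(\Phi)}=c^q\delta^{-qb}N^{q(t-s)}\delta^{qb-D(\Phi)}=c^q\delta^{-D(\Phi)}N^{q(t-s)} .
\]
Here we used the second expression for $D(\Phi)$ from \Cref{def:overlap-parameters}.

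\emph{Combining.} For a pattern with $h\ge u$ and $D\le w$, we have $\delta^{-D}\le\delta^{-w}$ because $\delta\le1$. Each pattern therefore contributes at most
\[
N^{qs-h}\,c^q\delta^{-w}N^{qt-qs}\le c^q\delta^{-w}N^{tq-u}.
\]
Note that $h$ and $D$ are determined by the pattern; $D$ depends only on which coordinates coincide on the sets $p$. Summing over the boundedly many patterns gives the bound with $C_{q,s}$ equal to the number of patterns, for example at most $(qs)^{qs}$.

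The only point requiring care is the independence claim, namely that distinct pairs $(p,z)$ give independent events. It holds because the $G_p$ are independent across $p$ and their edges are independent within each $G_p$. The genuinely hard work, choosing which patterns occur for each conflict type, lies outside this fact.
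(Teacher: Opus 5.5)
Your proposal is correct and follows essentially the same route as the paper: bound the number of tuples with $h(\Phi)\ge u$ by $C_{q,s}N^{qs-u}$ via the finitely many coincidence patterns, and show the selection probability is exactly $\rho^q\delta^{bq-D(\Phi)}=c^q\delta^{-D(\Phi)}N^{q(t-s)}\le c^q\delta^{-w}N^{q(t-s)}$. The only difference is bookkeeping: you sum per pattern, whereas the paper multiplies the total tuple count by the worst-case probability. Independence of the $Z_{\phi_i}$ relies on the maps in $\Phi$ being distinct, which holds by the definition of $\Phi$.
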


\begin{proof}
There are only $O_{q,s}(1)$ ways to specify which of the $qs$
coordinate occurrences among $\phi_1,\ldots,\phi_q$ are equal.
If $h(\Phi)\ge u$, at most $qs-u$ distinct vertices occur,
so there are at most $C_{q,s}N^{qs-u}$ such tuples.

For a fixed tuple $\Phi=(\phi_1,\ldots,\phi_q)$ of distinct maps,
the events $Z_{\phi_i}=1$, $i\in[q]$, contribute a factor $\rho^q$.
For each $p\in B$, we also need every set in
$\{\phi_i(p):i\in[q]\}$ to be an edge of $G_p$.
By the definition of $D(\Phi)$, the total number of these edges,
summed over $p\in B$, is
$\sum_{p\in B}|\{\phi_i(p):i\in[q]\}|=bq-D(\Phi)$.
Each of these edges is included independently with probability
$\delta$, independently also of the variables $Z_{\phi_i}$.
Therefore,
$\mathbb P(\phi_1,\ldots,\phi_q\in\mathcal A)
    =\rho^q\delta^{bq-D(\Phi)}
    =c^q\delta^{-D(\Phi)}N^{q(t-s)}$.
Since $D(\Phi)\le w$ and $\delta\le1$, this is at most
$c^q\delta^{-w}N^{q(t-s)}$.
Multiplying by the tuple count proves the bound.
\end{proof}

\begin{fact}\label{fact:easy-conflict-overlap}
Suppose that a collision conflict, a junction-cycle conflict, or a
supercritical conflict involves the ordered tuple
$\Phi=(\phi_1,\ldots,\phi_q)$
of selected transversal maps. Then
$h(\Phi)\ge tq-t+1$.
\end{fact}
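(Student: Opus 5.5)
We handle the three conflict types separately. Throughout we use one counting principle. Add the maps $\phi_1,\dots,\phi_q$ one at a time and let $o_i=|\phi_i(V)\cap\bigcup_{i'<i}\phi_{i'}(V)|$ be the overlap of the $i$-th copy with the earlier ones. Then
\[
h(\Phi)=qs-v(\Phi)=\sum_{i=2}^q o_i .
\]
So it suffices to show that the $o_i$ sum to at least $t(q-1)+1$.

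\emph{Collision conflicts.} Here $q=2$ and $o_2=|\phi_1(V)\cap\phi_2(V)|\ge t+1$. This is exactly $tq-t+1$.

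\emph{Junction-cycle conflicts.} Take the Berge cycle $(v_1,F_1,\dots,v_q,F_q)$ and relabel so that $v_i$ corresponds to $\phi_i$. Each junction $F_i$ has a label $(p_i,z_i)$ with $z_i=\phi_{i}(p_i)=\phi_{i+1}(p_i)$, a common $t$-set. Add the copies in the cyclic order $\phi_1,\dots,\phi_q$.
\begin{itemize}
\item For $2\le i\le q-1$, the copy $\phi_i$ meets $\phi_{i-1}$ in $z_{i-1}$, so $o_i\ge t$.
\item The last copy $\phi_q$ meets the earlier union in $z_{q-1}\cup z_q$, since $z_q=\phi_q(p_q)=\phi_1(p_q)$.
\end{itemize}
If $z_{q-1}\neq z_q$, then $|z_{q-1}\cup z_q|\ge t+1$, so $o_q\ge t+1$ and the total is at least $t(q-1)+1$.

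The main subtlety is the case $z_{q-1}=z_q$. Since $\phi_q$ is injective, the labels then satisfy $p_{q-1}=p_q$ as subsets of $V$. The labels $(p,z)$ of $F_{q-1}$ and $F_q$ would coincide, and junctions with the same label are one hyperedge $Q_\Phi(p,z)$. This contradicts the distinctness of the hyperedges in a Berge cycle. For $q=2$ the same argument applies: two distinct junctions both containing $\{1,2\}$ have distinct labels, and distinct labels force distinct $t$-sets $z_1\neq z_2$ inside $\phi_1(V)\cap\phi_2(V)$, giving overlap at least $t+1$.

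\emph{Supercritical conflicts.} Here we use collision-freeness: all pairwise overlaps are at most $t$. We lower-bound $h$ via $H$. Write $W_i=V(H_i)$, so $v(H)=|\bigcup W_i|$.

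Within a single copy, the old seed is $(d\ell+t-1,\ell)$-free for $2\le\ell\le\alpha$, and single edges are trivially fine. Hence $|W_i|\ge d|H_i|+t$.

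Since $v(H)\le d|H|+t-1$, we get
\[
\sum_i |W_i|-\Bigl|\bigcup_i W_i\Bigr|\ \ge\ tq-t+1 .
\]
The left-hand side is the vertex overlap of the sets $W_i$, computed sequentially. It is at most the corresponding sequential overlap of the full copies $\phi_i(V)\supseteq W_i$. Indeed, for each $i$,
\[
|W_i\cap\textstyle\bigcup_{i'<i}W_{i'}|\le|\phi_i(V)\cap\bigcup_{i'<i}\phi_{i'}(V)| = o_i .
\]
Summing over $i$ gives $h(\Phi)\ge tq-t+1$. Collision-freeness is not even needed in this case.

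The expected obstacle is only the degenerate equal-$z$ case in the junction cycle, which is handled by label distinctness.
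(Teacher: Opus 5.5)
Your proof is correct and follows the paper's argument essentially step for step: the same direct count for collisions, the same sequential-overlap count along the Berge cycle with label distinctness ruling out a repeated shared $t$-set (you close the cycle at map $q$ rather than at map $1$), and the same comparison $h(\Phi)\ge\sum_i v(H_i)-v(H)$ for supercritical conflicts. Two small refinements of the paper's write-up are also valid: you justify that comparison explicitly via $W_i\subseteq\phi_i(V)$, and you observe that $|H|\le\sum_i|H_i|$ makes edge-disjointness (hence collision-freeness) unnecessary there.
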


\begin{proof}
For a collision conflict, $q=2$ and
$h(\Phi)=|\phi_1(V)\cap\phi_2(V)|\ge t+1$.

For a junction-cycle conflict, relabel the maps along its Berge
cycle as $1,\ldots,q$. Consecutive maps share a dangerous $t$-set.
The two junctions incident with map $1$ are distinct, so their
shared $t$-sets are distinct.
Indeed, if both shared sets were equal to $z$, then both junctions
would have the label $(\phi_1^{-1}(z),z)$, contradicting their
distinctness.
Add the vertex sets of the maps in
the order $2,3,\ldots,q,1$. Each intermediate set overlaps the
preceding union in at least $t$ vertices, and the last overlaps
it in at least $t+1$. Hence
$h(\Phi)\ge(q-2)t+(t+1)=tq-t+1$, also when $q=2$.

For a supercritical conflict, the $H_i$ are edge-disjoint and the
old seed gives $v(H_i)\ge d|H_i|+t$ for every $i$, including
single edges. Therefore
\[
    h(\Phi)\ge\sum_i v(H_i)-v(H)
    \ge d\sum_i|H_i|+qt-(d|H|+t-1)=tq-t+1.\qedhere
\]
\end{proof}

\begin{fact}\label{fact:acyclic-junction-D}
Let $\Phi=(\phi_1,\ldots,\phi_q)$ be an ordered tuple of distinct
transversal maps. If $\mathcal J(\Phi)$ is Berge-acyclic, then
$D(\Phi)\le q-1$.
Moreover, if $D(\Phi)=q-1$, then the incidence graph of
$\mathcal J(\Phi)$ is connected and hence is a tree. In particular,
every $i\in[q]$ belongs to some junction of $\mathcal J(\Phi)$.
\end{fact}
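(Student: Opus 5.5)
The plan is to work directly with the incidence graph $I$ of $\mathcal J(\Phi)$. This is the bipartite graph with vertex set $[q]\sqcup E(\mathcal J(\Phi))$, where each junction $Q$ is joined to the $|Q|$ indices it contains. By hypothesis $I$ is a forest, and the number of its edges is $\sum_{Q}|Q|$. Write $m$ for the number of junctions and $\kappa$ for the number of connected components of $I$. Since a forest has exactly $v(I)-\kappa$ edges, we get
\[
\sum_{Q\in E(\mathcal J(\Phi))}|Q| \;=\; q+m-\kappa .
\]
Subtracting $m$ from both sides gives $D(\Phi)=\sum_Q(|Q|-1)=q-\kappa$. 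The fact that distinct labels $(p,z)$ give distinct hyperedges causes no trouble here. Each labelled junction is simply a separate vertex of $I$, and the definition of $D(\Phi)$ already counts labels separately, so the identity holds verbatim for the multi-hypergraph.

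The bound follows at once. Since $q\ge2$, the graph $I$ has at least one vertex, so $\kappa\ge1$ and $D(\Phi)\le q-1$. Equality $D(\Phi)=q-1$ forces $\kappa=1$, so $I$ is a connected forest, that is, a tree.

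For the final assertion, suppose some $i\in[q]$ lies in no junction. Then $i$ is an isolated vertex of $I$. Because $q\ge2$, there is another index $i'\ne i$, which lies in a different component of $I$. Hence $\kappa\ge2$, contradicting connectivity.

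The only point needing care is the bookkeeping identity $e(I)=\sum_Q|Q|$ together with the forest edge count, which is the main step. There is also a small degenerate case: when there are no junctions at all, $D(\Phi)=0\le q-1$ holds trivially, and equality is impossible because $q\ge2$.
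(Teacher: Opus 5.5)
Your proposal is correct and is essentially the paper's proof: both count the edges of the incidence forest as $q+m-\kappa$, obtain $D(\Phi)=q-\kappa\le q-1$, and deduce the tree structure and the absence of isolated indices from $\kappa=1$. Your explicit appeal to $q\ge 2$ in the last assertion, which genuinely fails for $q=1$, spells out a point that the paper leaves to the standing assumption $q\ge 2$ from the overlap-parameter definition.
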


\begin{proof}
Since $\mathcal J(\Phi)$ is Berge-acyclic, its incidence graph is a
forest. Let $m$ be the number of junctions, counted with their labels,
and let $c_0$ be the number of connected components of the incidence
graph, including the isolated vertices in $[q]$. The incidence graph has
$q+m$ vertices and hence $q+m-c_0$ edges. Therefore,
    $D(\Phi)
    =
    \sum_{Q\in E(\mathcal J(\Phi))}(|Q|-1)
    =
    (q+m-c_0)-m
    =
    q-c_0
    \le q-1.$
If $D(\Phi)=q-1$, then $c_0=1$. Thus the incidence graph is connected
and hence is a tree. In particular, no vertex in $[q]$ is isolated, so
every $i\in[q]$ belongs to some junction of $\mathcal J(\Phi)$.
\end{proof}

The only potentially costly case has the minimum vertex overlap
$h(\Phi)=tq-t$ and a tree of dangerous-set overlaps, so that
$D(\Phi)=q-1$. Equality forces every participating subgraph to
cover a dangerous set. 
Each must then have an even number of edges
and at least $\beta+1$ edges. This is precisely why such a union
cannot be one of the exact conflicts we need to remove.

\begin{fact}\label{fact:critical-exact-overlap}
Suppose that an exact conflict consists of an ordered tuple
$\Phi=(\phi_1,\ldots,\phi_q)$
and nonempty subhypergraphs
$H_i\subseteq E_{\phi_i}$ for every $i\in[q]$. 
If $h(\Phi)=tq-t$ and $D(\Phi)=q-1$,
then $|H|$ is even and $|H|>\beta+2$. Consequently, no exact conflict
can satisfy both equalities.
\end{fact}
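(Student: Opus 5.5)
We will show that the two equalities force every piece $H_i$ to be tight and to contain a dangerous set of its own copy in its vertex set. The claim then follows from \Cref{fact:dangerous-covered-number} together with the parity condition in \Cref{def:friendly}.

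\emph{Step 1: all overlap inequalities are tight.} Since $D(\Phi)=q-1$ and $\mathcal J(\Phi)$ is Berge-acyclic, \Cref{fact:acyclic-junction-D} says the incidence graph of $\mathcal J(\Phi)$ is a tree. Order the maps $\phi_1,\dots,\phi_q$ along a traversal of this tree, so that each $\phi_i$ with $i\ge2$ shares a junction (a common dangerous image $z$) with an earlier map.

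Adding the vertex sets $\phi_i(V)$ in this order, each new one meets the previous union in at least $t$ vertices. So $h(\Phi)\ge t(q-1)$, and the hypothesis $h(\Phi)=tq-t$ forces each intersection to be exactly the junction $t$-set. In particular, for $i\ne i'$ the set $\phi_i(V)\cap\phi_{i'}(V)$ is contained in the union of the junction $t$-sets through which the tree path from $i$ to $i'$ passes.

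Now compare with $H$. The pieces are edge-disjoint by collision-freeness, and the old seed gives $v(H_i)\ge d|H_i|+t$. Adding the $H_i$ in the same order, each new piece can share with the previous union only vertices lying in the junction set attaching $\phi_i$. This gives
\[
 v(H)\ge \sum_i v(H_i)-(q-1)t\ge d|H|+t .
\]
Because $v(H)=d|H|+t$, every inequality is an equality. Hence each $H_i$ is tight, i.e.\ $v(H_i)=d|H_i|+t$. Moreover, each attaching junction set $\phi_i(p)$ lies entirely in $V(H_i)$ and also in the vertex set of the previous pieces, since otherwise the overlap would be smaller than $t$.

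\emph{Step 2: each piece covers a dangerous set.} By Step 1, every $i$ lies in some junction (every vertex of the tree has degree at least one when $q\ge2$). So there is a $p\in B$ with $\phi_i(p)\subseteq V(H_i)$. Pulling back via $\phi_i$ shows that $\phi_i^{-1}(H_i)$ is a tight subgraph of $E$ whose vertex set contains $p\in B$. By \Cref{fact:dangerous-covered-number}, $|H_i|\ge\mu_p(\mathcal T)=\beta+1$.

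Next, $|H_i|$ is even. If $|H_i|\ge3$ were odd, then $|H_i|\le|H|\le\alpha$ would make $H_i$ a tight subgraph with an odd number of edges, contradicting \Cref{def:friendly}(2). The case $|H_i|=1$ is excluded because $|H_i|\ge\beta+1\ge2$.

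Summing over $i$, $|H|=\sum|H_i|$ is even and at least $q(\beta+1)\ge2\beta+2>\beta+2$. This contradicts both alternatives (a) and (b) of an exact conflict, proving the final sentence of the fact.

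\emph{Main obstacle.} The delicate point is the tightness bookkeeping in Step 1. We must make sure the overlap between $H_i$ and the earlier pieces is bounded by the junction set of $\phi_i$ with its parent, rather than by all of $\phi_i(V)\cap\bigcup_{i'<i}\phi_{i'}(V)$. This uses $h(\Phi)=tq-t$ to rule out any extra shared vertices. We also need equality to force the full junction $t$-set into $V(H_i)$; without that, the piece would not see a dangerous set. I would handle both points by writing $v(\bigcup_{i'\le i}H_{i'})$ via inclusion–exclusion against $\phi_i(V)\cap\bigcup_{i'<i}\phi_{i'}(V)$, which has size exactly $t$.
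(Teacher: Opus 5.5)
Your proof is correct and is essentially the paper's argument: equality in $h(\Phi)\ge\sum_i v(H_i)-v(H)\ge t(q-1)$ makes every $H_i$ tight with some junction set $\phi_i(p)\subseteq V(H_i)$. Hence $|H_i|\ge\beta+1$ and $|H_i|$ is even, by \Cref{fact:dangerous-covered-number} and \Cref{def:friendly}(2).

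The paper gets the overlap inequality from a per-vertex multiplicity count rather than your tree-ordered inclusion--exclusion, and this avoids treating the root separately. In your version, the root $\phi_1$ is covered not merely because it lies in a junction, as Step~2 says. It is covered because Step~1 gives $\phi_1(p)=\phi_2(p)\subseteq V(H_1)$ for the junction attaching $\phi_2$.
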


\begin{proof}
Put $e_i=|H_i|$ and $v_i=v(H_i)$. The copies are edge-disjoint,
and the old seed gives $v_i\ge de_i+t$, including when $e_i=1$.
Since $v(H)=d\sum_i e_i+t$, we have    $h(\Phi)\ge\sum_i v_i-v(H)\ge t(q-1)=h(\Phi).$
Thus equality holds throughout. In particular,
\begin{equation*}\label{eq:critical-overlap-equalities}
    v_i=de_i+t\quad\text{for every }i,
    \qquad \sum_i v_i-v(H)=h(\Phi).
\end{equation*}
The second equality implies that every shared vertex is retained
in all the corresponding sets $V(H_i)$.
Indeed, a vertex belonging to exactly $a\ge2$ of the sets
$\phi_i(V)$ contributes $a-1$ to $h(\Phi)$.
If it belonged to only $a'<a$ of the corresponding sets $V(H_i)$,
its contribution to $\sum_i v_i-v(H)$ would be
$\max\{a'-1,0\}<a-1$.
Since $V(H_i)\subseteq\phi_i(V)$ for every $i$, the contribution
of any other vertex cannot increase, contradicting the second equality.

Since $D(\Phi)=q-1$, \Cref{fact:acyclic-junction-D} shows that
every map belongs to a junction $Q_\Phi(p_i,z_i)$.
All vertices of $z_i=\phi_i(p_i)$ are shared, so $z_i\subseteq V(H_i)$.
Consequently,
$e_i\ge\mu_{p_i}(\mathcal T)=\beta+1$ by
\Cref{fact:dangerous-covered-number}.
As $e_i\ge2$, tightness and \Cref{def:friendly}(2) force $e_i$
to be even. Hence $|H|=\sum_i e_i$ is even and
$|H|\ge2(\beta+1)>\beta+2$, contradicting both alternatives
in the definition of an exact conflict.
\end{proof}

The same vertex-counting argument used above always gives
$h(\Phi)\ge tq-t$. Since $h(\Phi)$ is an integer, either
$h(\Phi)\ge tq-t+1$ or $h(\Phi)=tq-t$. In the latter case,
Fact~\ref{fact:acyclic-junction-D} gives $D(\Phi)\le q-1$, while
Fact~\ref{fact:critical-exact-overlap} excludes $D(\Phi)=q-1$.
Thus, we have now obtained the following dichotomy. If an exact conflict
involves an ordered $q$-tuple $\Phi$ of selected transversal maps, then
\[ \text{either }
h(\Phi)\ge tq-t+1,
\qquad \text{or} \qquad 
    h(\Phi)=tq-t
    \quad\text{and}\quad
    D(\Phi)\le q-2.
\]

We are now ready to prove Lemma~\ref{lemma:good-realization}.

\begin{proof}[Proof of Lemma~\ref{lemma:good-realization}]
Let $Y_{\mathrm{easy}}$ count the collision, junction-cycle, and
supercritical conflicts, together with the exact conflicts whose
$q$-tuple satisfies $h(\Phi)\ge tq-t+1$.
Every such tuple satisfies $2\le q\le\alpha$ and
$D(\Phi)\le b(q-1)$.
Moreover, each tuple gives rise to only a bounded number of
conflicts, with the bound depending only on $\mathcal T$ and $\alpha$.
By \Cref{fact:easy-conflict-overlap,fact:tuple-estimate},
\begin{equation}\label{eq:expected-easy-conflicts}
    \mathbb E Y_{\mathrm{easy}}
    =O_{\mathcal T,\alpha,c,\delta}(N^{t-1}).
\end{equation}

For each remaining conflict, the preceding dichotomy gives
$h(\Phi)=tq-t$ and $D(\Phi)\le q-2$.
For each fixed $q$, \Cref{fact:tuple-estimate} bounds the expected
number of such conflicts by a constant times
$c^q\delta^{-(q-2)}N^t=\Lambda^q\delta^2N^t$.
Summing over $2\le q\le\alpha$ yields
\begin{equation}\label{eq:expected-critical-exact-conflicts}
    \mathbb E(Y-Y_{\mathrm{easy}})\le K\delta^2N^t,
\end{equation}
where $K=K(\mathcal T,\alpha,\Lambda)$ is independent of $\delta,N$.
Choose $0<\delta\le\min\{1,\Lambda^{-1}\}$ so that
$K\delta^2\le c/8$, and then choose $N$ large enough that
$\mathbb E Y_{\mathrm{easy}}\le cN^t/8$.
Thus $\mathbb E Y\le cN^t/4$.

We already have $\mathbb E X=cN^t$, while linearity of expectation
gives $\mathbb E Z=b\delta N^t$.
Since $c=\Lambda\delta$,
\[
    \mathbb E\left[X-Y-\frac{\Lambda}{4b}Z\right]
    \ge cN^t-\frac14cN^t-\frac14cN^t
    =\frac12cN^t.
\]
Choose a realization in which the expression in brackets is at
least its expectation. Since $Z\ge0$, this realization satisfies
both inequalities in \eqref{eq:good-realization}.
\end{proof}

\subsection{Proof of \Cref{thm:BES-3-2-even}}
In this subsection, we prove \cref{thm:BES-3-2-even}. We construct a seed for each even $k$ and apply
\Cref{lemma:glock-packing}. 
The construction keeps a positive excess
in $3|E|-|J|$ while increasing the non-edge girth.
\begin{definition}\label{def:seed-3-2-even+}
For every integer $\ell\ge1$, we recursively define a
\((3,2)\)-seed
$\mathcal S_\ell=(V_\ell,E_\ell,J_\ell)$
as follows.
The initial seed $\mathcal S_1$ is the seven-edge configuration
used in~\cite{glock_64_2024}
; see \Cref{fig:initial-seed-3-2-even}.
It is defined by
$V_1=\{a,b,c,x_1,y_1,x_2,y_2,x_3,y_3\}$,
$
E_1=
\{
abc,\,
ax_1y_1,\,
bx_1y_1,\,
bx_2y_2,\,
cx_2y_2,\,
cx_3y_3,\,
ax_3y_3
\}$ and 
$J_1 = \partial_2(E_1).$ Now suppose that \(\mathcal S_\ell=(V_\ell,E_\ell,J_\ell)\)
has been defined for some \(\ell\ge1\), and put
$R_\ell=\binom{V_\ell}{2}\setminus J_\ell$.
For every \(p=\{u,v\}\in R_\ell\), choose a new two-element set
$X_p=\{x_p,y_p\}$,
such that these sets are pairwise disjoint and disjoint from
\(V_\ell\).
Define
$\mathcal D_p
=
\bigl\{
\{u\}\cup X_p,\,
\{v\}\cup X_p
\bigr\}$.
We then set
\begin{align*}
    V_{\ell+1}
=
V_\ell\cup\bigcup_{p\in R_\ell}X_p, \quad
E_{\ell+1}
=
E_\ell\cup\bigcup_{p\in R_\ell}\mathcal D_p,
\quad
J_{\ell+1}
=
J_\ell\cup
\bigcup_{p\in R_\ell}
\binom{p\cup X_p}{2}.
\end{align*}

\begin{figure}[t]
\centering
\begin{tikzpicture}[
  line cap=round,
  line join=round,
  vertex/.style={
    draw=black!70,
    circle,
    line width=0.55pt,
    fill=white,
    minimum size=6.2mm,
    inner sep=0pt,
    font=\small
  },
  centraledge/.style={
    fill=seedblue!24,
    draw=none
  },
  outeredge/.style={
    fill=seedblue!12,
    draw=none
  },
  Jpair/.style={
    draw=seedblue!78!black,
    line width=0.60pt
  }
]

\coordinate (a) at (-2.80, 0.70);
\coordinate (b) at ( 2.80, 0.70);
\coordinate (c) at ( 0.00,-2.05);

\coordinate (x1) at ( 0.00, 1.45);
\coordinate (y1) at ( 0.00, 2.45);

\coordinate (x2) at ( 1.82,-0.87);
\coordinate (y2) at ( 2.62,-1.52);

\coordinate (x3) at (-1.82,-0.87);
\coordinate (y3) at (-2.62,-1.52);

\path[centraledge] (a)--(b)--(c)--cycle;

\path[outeredge] (a)--(x1)--(y1)--cycle;
\path[outeredge] (b)--(x1)--(y1)--cycle;

\path[outeredge] (b)--(x2)--(y2)--cycle;
\path[outeredge] (c)--(x2)--(y2)--cycle;

\path[outeredge] (c)--(x3)--(y3)--cycle;
\path[outeredge] (a)--(x3)--(y3)--cycle;

\draw[Jpair] (a)--(b);
\draw[Jpair] (a)--(x1);
\draw[Jpair] (a)--(y1);
\draw[Jpair] (b)--(x1);
\draw[Jpair] (b)--(y1);
\draw[Jpair] (x1)--(y1);

\draw[Jpair] (b)--(c);
\draw[Jpair] (b)--(x2);
\draw[Jpair] (b)--(y2);
\draw[Jpair] (c)--(x2);
\draw[Jpair] (c)--(y2);
\draw[Jpair] (x2)--(y2);

\draw[Jpair] (c)--(a);
\draw[Jpair] (c)--(x3);
\draw[Jpair] (c)--(y3);
\draw[Jpair] (a)--(x3);
\draw[Jpair] (a)--(y3);
\draw[Jpair] (x3)--(y3);

\node[vertex] at (a)  {\(a\)};
\node[vertex] at (b)  {\(b\)};
\node[vertex] at (c)  {\(c\)};

\node[vertex] at (x1) {\(x_1\)};
\node[vertex] at (y1) {\(y_1\)};
\node[vertex] at (x2) {\(x_2\)};
\node[vertex] at (y2) {\(y_2\)};
\node[vertex] at (x3) {\(x_3\)};
\node[vertex] at (y3) {\(y_3\)};

\end{tikzpicture}
\caption{The initial seed \(\mathcal S_1\).}
\label{fig:initial-seed-3-2-even}
\end{figure}
\end{definition}

We begin with the following properties for $\mathcal{S}_{\ell}$.

\begin{prop}\label{prop:properties-seed-ell}
For any integer \(\ell\ge1\), we have the following.
\begin{enumerate}
    \item We have
$        \frac{|E_\ell|}{|J_\ell|}
        =
        \frac{1}{3}+\frac{1}{|J_\ell|}
        >
        \frac{1}{3}.$

    \item For any integer \(h\ge1\), the \(3\)-graph
    \((V_\ell,E_\ell)\) is \((h+1,h)\)-free.

    \item If \(H\subseteq E_\ell\) satisfies
    $v(H)
        \le e(H)+2$,
    then either \(e(H)\leq 2\), or
    \(abc\in H\) and \(e(H)\) is odd.

    \item For any \(p\in R_\ell\), we have
    $\mu_p(\mathcal S_\ell)\ge 2\ell+1$.
\end{enumerate}
\end{prop}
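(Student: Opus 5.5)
We prove all four statements simultaneously by induction on \(\ell\). For \(\ell=1\) everything is a finite check on the seven-edge configuration \(\mathcal S_1\). It has \(|E_1|=7\) and \(|J_1|=3+3\cdot 5=18\), since the pair \(ab\) does not lie in \(ax_1y_1\) or \(bx_1y_1\), and similarly for \(bc\) and \(ca\). Part (2) and part (3) (which in particular covers \(\mathcal S_1\) itself, with \(7\) edges) follow by inspecting subsets. For part (4) we check that every pair outside \(J_1\) needs at least \(3\) edges to be covered by a tight subgraph. The key point here is that the crossing pair of each of the three diamonds lies in \(J_1\), since it is an edge of \(abc\).

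For the induction step, part (1) is bookkeeping. Each \(p\in R_\ell\) contributes two new edges and the six pairs of \(p\cup X_p\). These six pairs are all new: \(p\notin J_\ell\), and the other five pairs meet \(X_p\). Hence \(3|E|-|J|\) is invariant and equals \(21-18=3\). This gives \(|E_\ell|/|J_\ell|=(|J_\ell|+3)/(3|J_\ell|)\).

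For parts (2)--(4) the main tool is a peeling step. Let \(H\subseteq E_{\ell+1}\). The vertices \(x_p,y_p\) lie only in the edges of \(\mathcal D_p\). Removing \(H\cap\mathcal D_p\) from \(H\) therefore deletes at least the two vertices \(x_p,y_p\). If \(|H\cap\mathcal D_p|=1\), this strictly decreases \(v-e\). If \(|H\cap\mathcal D_p|=2\), it does not increase \(v-e\), and it preserves \(v-e\) only when both \(u\) and \(v\) stay in the remaining vertex set.
\begin{itemize}
\item Iterating the peeling down to \(H'=H\cap E_\ell\) gives part (2), using induction when \(H'\neq\varnothing\). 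When \(H'=\varnothing\), the graph \(H\) is a union of \(m\) pieces from distinct diamonds \(\mathcal D_p\). Counting the old vertices gives \(v(H)-e(H)\ge |V(H)\cap V_\ell|\ge 2\), because every piece contains at least one vertex of \(V_\ell\).
\item For part (3), suppose \(v(H)\le e(H)+2\). If some \(\mathcal D_p\) meets \(H\) in one edge, peeling leaves either a subgraph of \(E_\ell\) with \(v\le e+1\), contradicting part (2), or the empty graph. The latter is possible only if \(e(H)\le2\). Otherwise every peeled piece is a whole diamond whose pair \(p=\{u,v\}\) survives. The remainder \(H'\subseteq E_\ell\) is again tight in this sense, and \(p\subseteq V(H')\) with \(p\in R_\ell\). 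By induction, part (4) gives \(e(H')\ge 2\ell+1\ge3\). Part (3) at level \(\ell\) then gives \(abc\in H'\) with \(e(H')\) odd, and adding whole diamonds preserves the parity. If \(H'=\varnothing\), the old-vertex count above forces a single diamond.
\item For part (4), let \(p\in R_{\ell+1}\) and let \(H\) be tight with \(p\subseteq V(H)\). Every pair of \(R_\ell\) was added to \(J_{\ell+1}\), so \(p\) contains a new vertex of some \(X_q\). Tightness and the peeling analysis force \(\mathcal D_q\subseteq H\), with \(q\subseteq V(H\setminus\mathcal D_q)\) and \(H\setminus\mathcal D_q\) still tight. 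By induction this remainder has at least \(2\ell+1\) edges, so \(e(H)\ge 2\ell+3\). The degenerate case where the remainder is empty or tiny is excluded because \(p\) would then be a pair of a single diamond \(\mathcal D_q\), and all such pairs lie in \(J_{\ell+1}\).
\end{itemize}

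The main obstacle is the bookkeeping in the peeling when several new diamonds interact through shared old vertices. In that situation one must make sure the remainder really is tight and still contains the relevant pair. I would handle it by peeling one diamond at a time and tracking the quantity \(v-e\) together with the set of vertices whose survival is required.
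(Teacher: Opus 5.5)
Your route is the paper's. You use the invariant $3|E_\ell|-|J_\ell|=3$, induction on $\ell$, and strip off the new diamonds while tracking $v-e$. In the tight case you use (4) at level $\ell$ to force at least three old edges, so that (3) at level $\ell$ gives $abc$ and odd parity. However, two steps do not hold as written.

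First, in the case $H\cap E_\ell=\varnothing$ of part (2), you claim $v(H)-e(H)\ge|V(H)\cap V_\ell|\ge2$ ``because every piece contains at least one vertex of $V_\ell$''. That only gives $\ge1$: a single new edge $\{u\}\cup X_p$ has one old vertex. Your inequality throws away exactly the term you need. The exact count is
\[
v(H)-e(H)=|V(H)\cap V_\ell|+\#\{p:\ |H\cap\mathcal D_p|=1\}.
\]
If only one old vertex is used, every piece is a single edge, so the second term is positive. This count also justifies your later claims that an all-new tight $H$ is a single edge or a single diamond.

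Second, there is the obstacle you flag but do not resolve. When you peel $\mathcal D_p$ while other new diamonds remain, preservation of $v-e$ only places $u,v$ in the vertex set of the \emph{current} remainder. That remainder may contain them only via another new diamond $\mathcal D_{p'}$ with $p\cap p'\neq\varnothing$. So in (4) you cannot apply the level-$\ell$ bound $\mu_q(\mathcal S_\ell)\ge2\ell+1$ to $H\setminus\mathcal D_q$: it need not lie in $E_\ell$, and you have not shown $q\subseteq V(H\cap E_\ell)$. ``Peeling one diamond at a time'' does not by itself fix this.

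The missing observation is short. Take tight $H$ with $H\cap E_\ell\neq\varnothing$. Then $v-e$ equals $2$ on $H$, is at least $2$ on $H\cap E_\ell$ by induction, and never increases under peeling. Hence \emph{every} step of \emph{every} peeling order preserves it. Peeling $\mathcal D_q$ last gives $q\subseteq V(H\cap E_\ell)$ with $H\cap E_\ell$ tight, so $e(H)\ge e(H\cap E_\ell)+2\ge2\ell+3$. The same remark justifies $p\subseteq V(H')$ in (3).

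The paper sidesteps sequential peeling with a one-shot identity. Write $H_0=H\cap E_\ell$, let $W$ be the set of old vertices of new edges, and let $\mathcal P_1$ be the set of pairs contributing one edge. Then $v(H)-e(H)=v(H_0)-e(H_0)+|W\setminus V(H_0)|+|\mathcal P_1|$. Its equality case yields $\mathcal P_1=\varnothing$ and $W\subseteq V(H_0)$ for all used pairs simultaneously.
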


\begin{proof}[Proof of \Cref{prop:properties-seed-ell}]
Initially $|E_1|=7$ and $|J_1|=18$.
Each added diamond contributes two edges and six new reserved
pairs. Hence $3|E_\ell|-|J_\ell|=3$ for every $\ell$, proving
$|E_\ell|/|J_\ell|=1/3+1/|J_\ell|$.

We prove the remaining assertions together by induction on $\ell$.
The base case follows by inspecting the seven edges of $E_1$;
see \Cref{fig:initial-seed-3-2-even}.
For a nonempty $H\subseteq E_{\ell+1}$, put $H_0=H\cap E_\ell$.
Let $\mathcal P$ be the set of pairs $p\in R_\ell$ for which
$H\cap\mathcal D_p\ne\varnothing$, and let $\mathcal P_1$ contain
those contributing just one edge. Let $W$ be the set of old
vertices used by these new edges.
Since the new vertex pairs $X_p$ are pairwise disjoint and
disjoint from $V_\ell$, each $p\in\mathcal P$ contributes exactly
two new vertices to $H$.
The number of new edges in $H$ is
$2|\mathcal P|-|\mathcal P_1|$.
Therefore,
\begin{equation}\label{eq:H_0-and-H}
    \begin{aligned}
    v(H)-e(H)
    &= v(H_0)+|W\setminus V(H_0)|+2|\mathcal P|
       -\bigl(e(H_0)+2|\mathcal P|-|\mathcal P_1|\bigr)\\
    &= v(H_0)-e(H_0)
       +|W\setminus V(H_0)|+|\mathcal P_1|.
\end{aligned}
\end{equation}

If $H_0=\varnothing$, the right-hand side is
$|W|+|\mathcal P_1|\ge2$. 
Equality forces either $|W|=|\mathcal P_1|=1$, in which case
$H$ consists of a single edge, or $|W|=2$ and
$\mathcal P_1=\varnothing$, in which case $H$ consists of a
single whole diamond.
In either case every pair in $V(H)$ belongs to $J_{\ell+1}$.
If $H_0\ne\varnothing$, induction and \eqref{eq:H_0-and-H} give
$v(H)-e(H)\ge2$ again. This proves the second assertion.
Moreover, equality in this case holds precisely when $H_0$ is
tight and all new edges come in whole diamonds with their
endpoint pairs contained in $V(H_0)$.

Suppose that $H$ is tight and has at least three edges.
Then $H_0\ne\varnothing$. If no new diamond is used, apply induction
to $H_0=H$. Otherwise, for each used pair $p\in\mathcal P$, the
fourth induction hypothesis gives
$e(H_0)\ge\mu_p(\mathcal S_\ell)\ge2\ell+1\ge3$.
Thus $H_0$ contains $abc$ and has odd size by the third induction
hypothesis. Adding whole diamonds preserves both properties,
proving the third assertion.

Finally, let $z\in R_{\ell+1}$ lie in the vertex set of a tight
$H\subseteq E_{\ell+1}$. All pairs within $V_\ell$ belong to
$J_{\ell+1}$, so $z$ uses a new vertex in some $X_p$.
If $H_0=\varnothing$, then $H$ is a single edge or a single
whole diamond, so every pair in $V(H)$ belongs to $J_{\ell+1}$,
contradicting $z\in R_{\ell+1}$.
Thus $H_0\ne\varnothing$, and the equality characterization above
shows that $\mathcal D_p\subseteq H$ and that the tight subgraph
$H_0$ contains $p$ in its vertex set. Therefore
$e(H)\ge e(H_0)+2\ge\mu_p(\mathcal S_\ell)+2\ge2\ell+3$.
This proves the fourth assertion and completes the induction.
\end{proof}

It remains to prove \Cref{thm:BES-3-2-even}.

\begin{proof}[Proof of \Cref{thm:BES-3-2-even}]
Let $k\ge4$ be even and put $\ell=\lceil k/4\rceil$.
By the third assertion in
\Cref{prop:properties-seed-ell}, the seed $\mathcal S_\ell$ is
$(k+2,k)$-free: otherwise, a tight $k$-edge subhypergraph would
force $k$ to be odd.
By the second assertion, it is also $(k_1+1,k_1)$-free for every
integer $2\le k_1\le k-1$.
Moreover, the fourth assertion gives that the girth of
$\mathcal S_\ell$ is at least $2\ell+1>k/2$. Therefore, by \Cref{lemma:glock-packing} and the first assertion of \Cref{prop:properties-seed-ell},
$    \liminf_{n\to\infty}
    \frac{f^{(3)}(n;k+2,k)}{n^2}
    \ge        \frac{\gamma(\mathcal S_\ell)}{2}
    =
    \frac{1}{6}
    +
    \frac{1}{2|J_\ell|}
    >
    \frac{1}{6}.$
\end{proof}

\section{Proof of \Cref{thm:generalized-ramsey}}\label{sec:ramsey}
In this section, we prove \Cref{thm:generalized-ramsey} using
the following two comparison lemmas. These lemmas are direct generalizations of the lemmas used in~\cite{bennett_cushman_dudek_2025}.
Throughout this section, we let $p=r(k+1)$ and
$q=\binom{p}{r}-k+1$,
where $r,k\ge2$. 

\begin{lemma}\label{lemma:generalized-ramsey-upperbound}
For all $r,k\ge2$ and $n\ge p$, we have
$    \operatorname{ES}^{(r)}
    \bigl(n;p,q\bigr)
    \le
    \binom{n}{r}
    -
    \xi^{(2r)}
    \bigl(n;p,k\bigr).$
Consequently,
$    \limsup_{n\to\infty}
    \frac{
        \operatorname{ES}^{(r)}
        \bigl(n;p,q\bigr)
    }{n^r}
    \le
    \frac{1}{r!}
    -
    \liminf_{n\to\infty}
    \frac{
        \xi^{(2r)}
        \bigl(n;p,k\bigr)
    }{n^r}.$
\end{lemma}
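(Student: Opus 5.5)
The plan is to turn a good $2r$-graph into a coloring: each of its edges merges two complementary $r$-sets into one color class. Take $t=r$ and uniformity $2r$, so $d=2r-r=r$ and $dk+t=r(k+1)=p$. Let $H$ be an $n$-vertex $2r$-graph attaining $\xi^{(2r)}(n;p,k)$. Then $H$ is $(p,k)$-free and $(r\ell+r-1,\ell)$-free for every $2\le\ell\le k-1$.

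\textbf{Construction.} For every edge $e\in H$, fix an arbitrary split $e=A_e\sqcup A'_e$ with $|A_e|=|A'_e|=r$. Build an auxiliary graph $\Gamma$ on vertex set $\binom{[n]}{r}$ with one edge $A_eA'_e$ for each $e\in H$. Distinct edges of $H$ give distinct pairs, since the pair determines $e=A_e\cup A'_e$. Color each $r$-set by the connected component of $\Gamma$ containing it. The number of colors is the number of components, which is at least $\binom nr-e(\Gamma)=\binom nr-|H|$. It equals this exactly if $\Gamma$ is a forest, and in any case it is at most $\binom nr-|H|+(\text{cycle rank})$. So the first goal is to prove that $\Gamma$ is a forest.

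\textbf{Acyclicity (main step).} Suppose $\Gamma$ has a cycle $B_1,B_2,\dots,B_\ell,B_1$ of $r$-sets, where the $\Gamma$-edge $B_iB_{i+1}$ comes from $e_i=B_i\cup B_{i+1}\in H$. Then $\ell\ge3$ (a $2$-cycle would need two distinct edges of $H$ with the same union, impossible), and the $e_i$ are $\ell$ distinct edges of $H$. Their union is $\bigcup_i B_i$, which has at most $r\ell\le r\ell+r-1$ vertices. If $\ell\le k-1$, this contradicts $(r\ell+r-1,\ell)$-freeness. If $\ell\ge k$, take $k$ consecutive edges $e_1,\dots,e_k$ along the cycle instead. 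Their union is $B_1\cup\dots\cup B_{k+1}$, of size at most $r(k+1)=p$, contradicting $(p,k)$-freeness. Hence $\Gamma$ is a forest, and the coloring uses exactly $\binom nr-|H|$ colors.

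\textbf{The $(p,q)$ property.} Fix $S\in\binom{[n]}{p}$. The colors appearing on $\binom Sr$ are at least the components of the induced subgraph $\Gamma[\binom Sr]$, because vertices in distinct components of $\Gamma$ get distinct colors. This induced subgraph is a forest, so it has $\binom pr-e(\Gamma[\binom Sr])$ components. A $\Gamma$-edge $A_eA'_e$ lies inside $\binom Sr$ if and only if $e\subseteq S$. So $e(\Gamma[\binom Sr])$ is the number of edges of $H$ inside $S$, which is at most $k-1$ by $(p,k)$-freeness. Hence $S$ sees at least $\binom pr-k+1=q$ colors, and $\operatorname{ES}^{(r)}(n;p,q)\le\binom nr-\xi^{(2r)}(n;p,k)$. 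The limit statement then follows by dividing by $n^r$, using $\binom nr/n^r\to1/r!$ and taking $\limsup$.

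I expect the only delicate point to be the acyclicity of $\Gamma$, and in particular handling long cycles by truncating to $k$ consecutive edges. The rest is bookkeeping.
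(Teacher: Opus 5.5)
Your acyclicity argument is correct, but the step bounding the colors on $\binom{S}{r}$ has the inequality the wrong way round. The colors appearing on $\binom{S}{r}$ correspond to the components of $\Gamma$ that \emph{meet} $\binom{S}{r}$. Two $r$-subsets of $S$ in different components of $\Gamma[\binom{S}{r}]$ may still lie in the same component of $\Gamma$, joined by a path through $r$-sets not contained in $S$. Hence the number of colors on $\binom{S}{r}$ is \emph{at most}, not at least, the number of components of $\Gamma[\binom{S}{r}]$. The repetitions inside $S$ are therefore not controlled by the edges of $H$ inside $S$.

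This really happens with arbitrary splits once $k\ge3$. Two edges $X\cup Y$ and $Y\cup X'$ meeting in exactly $r$ vertices are allowed. With the splits $X\sqcup Y$ and $Y\sqcup X'$, any $S\supseteq X\cup X'$ with $Y\not\subseteq S$ sees $X$ and $X'$ in one color, although no edge of $H$ lies in $S$. For a concrete failure, take $r=2$ and $k=3$ (so $p=8$, $q=26$), $S=\{1,\dots,8\}$, and new vertices $a,b,c,d,e,f$. Use the edges $\{1,2,a,b\}$, $\{3,4,a,b\}$, $\{1,5,c,d\}$, $\{3,6,c,d\}$, $\{2,5,e,f\}$, $\{4,6,e,f\}$, each split into its part inside $S$ and its part outside $S$. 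Any two of these edges share at most two vertices and any three span at least nine, so this $4$-graph is $(8,3)$-free and $(5,2)$-free, and $\Gamma$ is a forest of three paths. Yet $\binom{S}{2}$ receives only $28-3=25<q$ colors. Your argument uses nothing about $H$ beyond these freeness properties, and the split is arbitrary, so the construction as stated does not prove the lemma.

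The missing idea is to choose the splits so that all $2|H|$ halves are distinct $r$-sets. Then $\Gamma$ is a matching and every color class has at most two members. A color is repeated inside $S$ exactly when an edge of $H$ lies in $S$, and the count $\binom{p}{r}-(k-1)=q$ follows; acyclicity is then automatic.

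The paper achieves this via $r$-components, joining two edges when they share at least $r$ vertices. The freeness conditions give each $r$-component fewer than $k$ edges, and an ordering in which every edge after the first has exactly $r$ vertices outside the earlier ones. Split each later edge so that both halves contain one of these new vertices, which is possible as $r\ge2$; then neither half appeared before. Halves from different $r$-components cannot coincide, since their edges would then share $r$ vertices.
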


\begin{lemma}\label{lemma:generalized-ramsey-lowerbound}
For all $r,k\ge2$ and $n\ge p$, we have
$    \operatorname{ES}^{(r)}
    \bigl(n;p,q\bigr)
    \ge
    \binom{n}{r}
    -
    f^{(2r)}
    \bigl(n;p,k\bigr)
    -
    (k-2).$
Consequently,
$    \liminf_{n\to\infty}
    \frac{
        \operatorname{ES}^{(r)}
        \bigl(n;p,q\bigr)
    }{n^r}
    \ge
    \frac{1}{r!}
    -
    \limsup_{n\to\infty}
    \frac{
        f^{(2r)}
        \bigl(n;p,k\bigr)
    }{n^r}.$
\end{lemma}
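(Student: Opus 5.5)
The plan is to run the Bennett--Cushman--Dudek reduction in uniformity $2r$. Fix a $(p,q)$-coloring $\chi$ of $K_n^{(r)}$ and write $\mathcal C_1,\dots,\mathcal C_m$ for its color classes. Then the number of colors is
\[
m=\binom nr-\sum_i(|\mathcal C_i|-1).
\]
So it suffices to build a $(p,k)$-free $2r$-graph $H$ with
\[
e(H)\ge\sum_i(|\mathcal C_i|-1)-(k-2).
\]
Note that $(p,k)$-free is the same as $(dk+t,k)$-free with $r'=2r$, $t=r$, $d=r$, since $rk+r=p$.

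The key reformulation of the coloring condition is in terms of repeats. For a $p$-set $S$, call $\sum_i(|\mathcal C_i\cap\binom Sr|-1)_+$ the number of repeats inside $S$. The condition that $S$ sees at least $q=\binom pr-k+1$ colors says exactly that every $p$-set contains at most $k-1$ repeats. Two consequences follow.
\begin{enumerate}
\item Every color class has at most $k$ edges. Indeed, $k+1$ edges of one color span at most $r(k+1)=p$ vertices, and padding them to a $p$-set gives $k$ repeats.
\item More generally, any family of same-colored pairs whose union fits in a $p$-set must not carry $k$ independent repeats.
\end{enumerate}

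Next I would build $H$. In each class $\mathcal C_i$, choose a spanning forest $T_i$ on the vertex set $\mathcal C_i$; a star centered at a fixed edge is a natural first choice. For each forest edge $\{e,f\}$, take the $2r$-set $e\cup f$, padded canonically with extra vertices when $|e\cup f|<2r$, and let $H$ be the set of all $2r$-sets so obtained. The $(p,k)$-freeness argument goes as follows. Suppose $k$ distinct edges of $H$ lie in a $p$-set $S$. Pick one generating forest edge for each of them. These $k$ forest edges form a forest on $r$-sets inside $S$, with each tree monochromatic. A forest with $k$ edges has (number of vertices) $-$ (number of components) $=k$. Hence $S$ contains at least $k$ repeats, a contradiction. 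The padding is harmless here, because $S\supseteq e\cup f$ whenever $S$ contains the padded set.

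The main obstacle is showing $e(H)\ge\sum_i(|\mathcal C_i|-1)-(k-2)$, i.e.\ that the map from forest edges to $2r$-sets loses at most $k-2$ in total. Collisions occur when two different forest edges produce the same $2r$-set $A$. This happens when two disjoint same-colored pairs partition $A$ in two ways, or when paddings coincide. A collision means $A$ together with a few extra vertices already carries at least two repeats. I would first try to choose the forests and the paddings adaptively, processing pairs greedily and re-choosing a forest edge inside the same class whenever its $2r$-set is already used. A forest edge can then be lost only if every alternative is blocked. I would then argue that all blocked pairs, together with the sets blocking them, fit inside a single $p$-set. This should force their number to be at most $k-2$, since $k-1$ repeats are already present there and we need to leave room. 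If the global bound $k-2$ does not follow directly, the fallback is to show that at most one "bad cluster" can exist. Two disjoint clusters would combine with the forest-counting argument above into a $p$-set with $k$ repeats. The asymptotic consequence is then immediate, since $k-2=O(1)=o(n^r)$.
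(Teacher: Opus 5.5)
Your construction and your $(p,k)$-freeness argument are essentially the paper's. The paper uses exactly star forests: a representative $A_c$ in each color class, and a $2r$-set $F_{c,A}\supseteq A_c\cup A$ for every other $r$-set $A$ of color $c$.

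The gap is the step you yourself flag as the main obstacle: the bound $e(H)\ge\sum_i(|\mathcal C_i|-1)-(k-2)$ is never proved. The following are all stated without justification:
\begin{itemize}
\item the adaptive re-choosing of forest edges;
\item the assertion that all blocked pairs and their blockers fit into a single $p$-set;
\item the ``at most one bad cluster'' fallback, where it is not clear why two clusters would together produce $k$ repeats.
\end{itemize}
So, as written, the proposal does not establish the lemma.

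The missing observation is already in your forest-counting argument. That argument only uses that the $k$ chosen \emph{forest edges} are distinct, not that the $2r$-sets they generate are distinct. Hence the multiset of generated $2r$-sets, with one labelled copy per forest edge, is $(p,k)$-free even when multiplicities are counted.

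Now let $\mathcal R$ be the set of forest edges whose $2r$-set is generated at least twice. Suppose $|\mathcal R|\ge k$. Take repeated $2r$-sets one at a time; each contributes at least two forest edges. So at most $\lceil k/2\rceil$ of them supply $k$ distinct forest edges. Their generated sets lie in a union of at most $2r\lceil k/2\rceil\le r(k+1)=p$ vertices. After padding to a $p$-set (using $n\ge p$), this set has at least $k$ repeats by your forest count, a contradiction. Thus $|\mathcal R|\le k-1$.

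Keeping one copy of each repeated set discards at most $|\mathcal R|-1\le k-2$ forest edges when $\mathcal R\neq\varnothing$, and none otherwise. This gives $\binom nr-m\le e(H)+k-2\le f^{(2r)}(n;p,k)+k-2$. With this, any fixed choice of forests and paddings works, and no adaptive procedure is needed.
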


\begin{proof}[Proof of \Cref{thm:generalized-ramsey} assuming \cref{lemma:generalized-ramsey-lowerbound,lemma:generalized-ramsey-upperbound}]
By \Cref{thm:BES-r-k-t-odd,thm:BES-r-k-t-odd-lower} and \eqref{eq:xi-packing-lower}, we have
\begin{align*}
    \lim_{n\to\infty}
    \frac{f^{(2r)}(n;p,k)}{n^r}
    =
    \lim_{n\to\infty}
    \frac{\xi^{(2r)}(n;p,k)}{n^r}
    =
    \begin{cases}
        \displaystyle
        \frac{2}{r!\left(2\binom{2r}{r}-1\right)},
        & \text{if $k$ is odd},\\[1.2em]
        \displaystyle
        \frac{1}{r!\binom{2r}{r}},
        & \text{if $k$ is even}.
    \end{cases}
\end{align*}
Combining
\Cref{lemma:generalized-ramsey-upperbound,lemma:generalized-ramsey-lowerbound}, we complete the proof.
\end{proof}
It remains to prove the two lemmas.
\begin{proof}[Proof of \Cref{lemma:generalized-ramsey-upperbound}]
Let $H$ attain $\xi^{(2r)}(n;p,k)$.
It is $(p,k)$-free and $(r(i+1)-1,i)$-free for $2\le i<k$,
so in particular it contains no $(k,b)$-block.
The component argument in the proof of
\Cref{thm:BES-r-k-t-odd-upper}, with $(r,t)$ replaced by $(2r,r)$,
shows that each $r$-component has fewer than $k$ edges and admits
an ordering $F_1,\ldots,F_m$ in which each $F_i$, $i\ge2$,
has exactly $r$ vertices outside $F_1\cup\cdots\cup F_{i-1}$.
To see the latter assertion, order the edges so that each $F_i$,
$i\ge2$, is $r$-adjacent to an earlier edge, and put
$U_i=\bigcup_{j=1}^i F_j$.
This gives $|U_i|\le r(i+1)$, while the
$(r(i+1)-1,i)$-freeness gives $|U_i|\ge r(i+1)$
for $2\le i\le m<k$.
Since $|U_1|=2r$, we have $|U_i|=r(i+1)$ for every $i$,
and hence $|F_i\setminus U_{i-1}|=r$ for $i\ge2$.

Choose a partition $F=A_F\mathbin{\dot\cup}B_F$ into two $r$-sets
for every edge, so that all chosen parts are distinct.
For the first edge of each component, choose any partition.
For every subsequent edge, choose both parts to contain a new
vertex; this is possible because there are $r\ge2$ new vertices.
Neither part was chosen earlier. Parts chosen in different
components cannot coincide, since the corresponding edges would
then be $r$-adjacent.

For each $F\in H$, give $A_F,B_F$ a common color used nowhere else,
and give every remaining $r$-set its own new color.
This uses $\binom nr-|H|$ colors.
For any $S\in\binom{[n]}p$, a color is repeated inside $S$ precisely
when its corresponding edge $F=A_F\cup B_F$ lies in $S$.
Since $H$ is $(p,k)$-free, there are at most $k-1$ such repetitions.
Thus $S$ receives at least $\binom pr-(k-1)=q$ colors, proving
$\operatorname{ES}^{(r)}(n;p,q)\le\binom nr-\xi^{(2r)}(n;p,k)$.
\end{proof}

\begin{proof}[Proof of \Cref{lemma:generalized-ramsey-lowerbound}]
Let $\chi$ be a $(p,q)$-coloring of $K_n^{(r)}$ using $m$ colors.
For every color $c$ used by $\chi$, choose an $r$-set $A_c$ of
color $c$. For every other $r$-set $A$ of color $c$, choose a set
$F_{c,A}\in\binom{[n]}{2r}$ such that
$A_c\cup A\subseteq F_{c,A}$.
Such a set exists since $n\ge p=r(k+1)\ge2r$.
Let $\mathcal M$ be the $2r$-uniform multi-hypergraph consisting
of one labelled copy of $F_{c,A}$ for every such pair $(c,A)$.
Different pairs are retained as different copies even if they
produce the same $2r$-set. We then have
$    |\mathcal M|
    =
    \binom{n}{r}-m.$

\begin{claim}\label{claim:ramsey-M-free}
    $\mathcal M$ is $(p,k)$-free, where multiplicities
are counted.
\end{claim}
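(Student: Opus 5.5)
The plan is to argue by contradiction. Suppose $k$ distinct labelled copies $F_{c_1,A_1},\dots,F_{c_k,A_k}$ of $\mathcal M$ have union $S'$ with $|S'|\le p$. Since $n\ge p$, we can enlarge $S'$ to a set $S\in\binom{[n]}{p}$ that still contains all $k$ of these $2r$-sets. The aim is to show that $\binom{S}{r}$ receives at most $\binom pr-k<q$ colors, which contradicts $\chi$ being a $(p,q)$-coloring.

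The key step is a counting argument inside $S$. Each chosen copy $F_{c_i,A_i}$ contains both $A_{c_i}$ and $A_i$. These are two distinct $r$-subsets of $S$ with the same color $c_i$. The pairs $(c_i,A_i)$ are distinct as labels, and for each color $c$ the non-representative sets $A$ are all different. Hence the sets $A_1,\dots,A_k$ are $k$ distinct $r$-subsets of $S$, and none of them is a representative $A_c$, since $A_i\ne A_{c_i}$ by construction.

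Now bound the number of colors appearing on $\binom Sr$. Map each color appearing in $S$ to a single $r$-set of that color inside $S$, taking $A_c$ whenever $A_c\subseteq S$. For each $i$, the color $c_i$ is represented by $A_{c_i}\subseteq S$. So every $A_i$ is an $r$-set in $S$ that is not the chosen representative of its color. The number of colors in $S$ is at most the number of $r$-sets in $S$ minus the number of non-representative ones, which is at most $\binom pr-k=q-1$. This is the desired contradiction.

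The only point needing care is the multiplicity: two labels $(c,A)\ne(c',A')$ might give the same $2r$-set. They still contribute distinct $A$'s, because $A$ determines $c$ as its color, so $(c,A)\ne(c',A')$ forces $A\ne A'$. I do not expect a genuine obstacle. The main thing to verify is that the representatives $A_{c_i}$ lie inside $S$, which holds because $A_{c_i}\subseteq F_{c_i,A_i}\subseteq S$.
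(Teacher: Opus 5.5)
Your proof is correct and takes essentially the same approach as the paper. The paper also places the $k$ labelled members inside a $p$-set $S$ and notes that each color $c$ used $m_c$ times contributes $A_c$ plus $m_c$ further distinct $r$-sets of color $c$, so $S$ has at least $\sum_c m_c=k$ repetitions and hence at most $\binom pr-k=q-1$ colors; your representative-map count, including the observation that distinct labels $(c,A)$ force distinct sets $A$, is a more explicit version of the same argument.
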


\begin{poc}
Suppose that $k$ labelled members have union contained in some
$S\in\binom{[n]}p$. If $m_c$ of these members arise from color
$c$, then $S$ contains $A_c$ and $m_c$ distinct additional $r$-sets
of that color. Thus the total number of color repetitions in $S$
is at least $\sum_c m_c=k$, leaving at most
$\binom pr-k=q-1$ colors, a contradiction.
\end{poc}

Let $\mathcal R$ consist of all members of $\mathcal M$ whose
support has multiplicity at least two. We claim that
$|\mathcal R|\le k-1$. Otherwise, take supports one at a time
until their copies include at least $k$ members. Each support
contributes at least two copies, so at most $\lceil k/2\rceil$
supports are needed. Choosing $k$ of these members gives a union
of size at most $2r\lceil k/2\rceil\le r(k+1)=p$, a contradiction.

Let $H$ be the underlying simple $2r$-graph of $\mathcal M$.
If $\mathcal R$ is nonempty, retaining one copy of each repeated
support discards at most $|\mathcal R|-1\le k-2$ members; otherwise
none are discarded. Since $H$ is $(p,k)$-free,
\[
    \binom nr-m=|\mathcal M|
    \le |H|+k-2\le f^{(2r)}(n;p,k)+k-2.
\]
Rearranging proves the lemma.
\end{proof}

\section{Concluding remarks}\label{sec:conclusion}

The integer-exponent Brown--Erd\H{o}s--S\'os problem now has a
uniform answer outside one family: odd $k$ gives the diamond
coefficient, and even $k$ gives the Steiner packing coefficient
whenever $r\ge4$. 
The remaining exact values concern triple systems
with an even number of forbidden edges. Our strict lower bound and the upper bound in
\Cref{thm:BES-r-k-t-odd-upper}, together with
\Cref{thm:f-to-g-cleanup}, give
$\frac16<\pi(3,2,2\ell)\le\frac15$
for every $\ell\ge2.$
Thus the exceptional behavior persists for every even $k$.

\begin{ques}\label{ques:determine-32k}
Determine $\pi(3,2,2\ell)$ for $\ell\ge4$.
\end{ques}

\bibliographystyle{abbrv}
\bibliography{BES_reference}
\end{document}